\numberwithin{equation}{section}
\newtheorem{theorem}{Theorem}[section]
\newtheorem{proposition}{Proposition}[section]
\newtheorem{lem}{Lemma}[section]
\newtheorem{definition}{Definition}[section]
\theoremstyle{definition}
\newtheorem{remark}{Remark}[section]
\newtheorem{rmk}{Example}[section]
\newcommand{\bC}{\mathbb{C}}
\newcommand{\bH}{\mathbb{H}}
\newcommand{\bN}{\mathbb{N}}
\newcommand{\bZ}{\mathbb{Z}}
\newcommand{\bR}{\mathbb{R}}
\newcommand{\bS}{\mathbb{S}}
\newcommand{\cL}{\mathcal{L}}
\newcommand{\cZ}{\mathcal{Z}}
\newcommand{\R}{\mathbb{R}}
\newcommand{\dmu}{d\mu}
\renewcommand{\aa}{\alpha_*}
\newcommand{\mm}{m_*}
\renewcommand{\b}{\mathfrak b}
\newcommand{\flux}[1]{\mathfrak F_{#1}}
\newcommand{\spec}{\sigma}
\newcommand{\nH}{\nabla}
\newcommand{\dH}{d_{\mathrm H}}
\newcommand{\sublap}{\Delta_{\mathbb H}}
\newcommand{\co}{P_A}
\newcommand{\cf}{\mathfrak p_A}
\newcommand{\cfo}{\mathfrak p_0}
\newcommand{\OmegaH}{\Omega_{\mathrm H}}
\newcommand{\eul}{\mathcal{E}}
\DeclareMathOperator{\dom}{dom}
\DeclareMathOperator{\supp}{supp}
\DeclarePairedDelimiter{\abs}{\lvert}{\rvert}
\DeclarePairedDelimiter{\norm}{\lVert}{\rVert}
\definecolor{biagio}{rgb}{0.16, 0.32, 0.75}
\definecolor{dario}{rgb}{0.82, 0.41, 0.12}
\definecolor{vale}{rgb}{0.53, 0.66, 0.42}
\definecolor{DarkGreen}{rgb}{0,0.5,0.1} 
\newcommand\soutD{\bgroup\markoverwith
{\textcolor{DarkGreen}{\rule[.5ex]{2pt}{1pt}}}\ULon}
\newcommand{\Hm}[1]{\leavevmode{\marginpar{\tiny%
$\hbox to 0mm{\hspace*{-0.5mm}$\leftarrow$\hss}%
\vcenter{\vrule depth 0.1mm height 0.1mm width \the\marginparwidth}%
\hbox to
0mm{\hss$\rightarrow$\hspace*{-0.5mm}}$\\\relax\raggedright #1}}}
\title{Horizontal magnetic fields and improved Hardy inequalities in the Heisenberg group}
\author{
 Biagio Cassano\thanks{Department of Mathematics, Universit\`a degli
  Studi di Bari ``A.~Moro'', via Orabona 4, 70125, Bari, Italy;
\texttt{biagio.cassano@uniba.it}.} 
 \and 
 Valentina Franceschi\thanks{Dipartimento di Matematica Tullio Levi-Civita, Universit\`a di Padova; \texttt{valentina.franceschi@unipd.it}.}
 \and 
 David Krejčiřík
 \thanks{Department of Mathematics, Faculty of Nuclear Sciences and Physical Engineering, Czech Technical University in Prague, Trojanova 13, 12000 Prague 2, Czechia; \texttt{david.krejcirik@fjfi.cvut.cz}.}
 \and 
 Dario Prandi
 \thanks{Université Paris-Saclay, CNRS, CentraleSupélec, Laboratoire des signaux et systèmes, 91190,
Gif-sur-Yvette, France. \texttt{dario.prandi@centralesupelec.fr}.}
}
\date{\vspace{-1em}}
\begin{document}
	\maketitle

\begin{abstract}
In this paper we introduce a notion of magnetic field in the Heisenberg group and we study its influence on spectral properties of the corresponding magnetic (sub-elliptic) Laplacian.
We show that uniform magnetic fields uplift the bottom of the spectrum.
For magnetic fields vanishing at infinity,
including Aharonov--Bohm potentials, we
derive magnetic improvements to a variety of Hardy-type inequalities for the Heisenberg sub-Laplacian.
In particular, we
establish a sub-Riemannian analogue of Laptev and Weidl sub-criticality result for magnetic Laplacians in the plane. 
Instrumental for our argument is the validity of a Hardy-type inequality for the Folland--Stein operator, that we prove in this paper and has an interest on its own.
\end{abstract}			
%


\section{Introduction}
%
It is well known that the (elliptic) Laplacian 
in $\R^n$ with $n \geq 3$ satisfies the Hardy inequality 
(in the sense of quadratic forms 
for self-adjoint realizations of the respective operators in $L^2(\R^n)$)
\begin{equation}\label{Hardy}
  -\Delta \geq \frac{c_n}{\varrho^{2}}
  \qquad \mbox{in} \qquad
  L^2(\R^n) \,,
\end{equation}
where $\varrho(x) := \sqrt{x_1^2 + \dots + x_n^2}$ is the Euclidean distance to the origin of~$\R^n$
and $c_n := (n-2)^2/4$ is a dimensional constant.
Moreover, the inequality is optimal in the sense that
no positive term can be added to the right-hand-side of~\eqref{Hardy},
see \cite[Sec.~8.1]{Weidl_1999a}. 
On the other hand, if $n=1$ or $n=2$,
there exists no positive function~$w$ such that $-\Delta \geq w$.
These properties are termed as the \emph{criticality}
versus the \emph{subcriticality} of the Laplacian
in the low and high dimensions of the Euclidean space, respectively.
These notions naturally extend to the setting of 
more general elliptic operators 
and Riemannian manifolds,
where they coincide with the alternative concepts of 
the \emph{parabolicity/recurrency}
versus the \emph{non-parabolicity/transiency}
(see~\cite{Pinchover_2007} for an overview).

In 1998 Laptev and Weidl demonstrated in \cite{LW99}
that adding a magnetic field to the Laplacian in~$\R^2$
makes the operator subcritical, meaning that there is a Hardy-type inequality.
The result has stimulated an enormous growth of interest in
magnetically induced Hardy-type inequalities for elliptic operators 
with many important applications in quantum mechanics and elsewhere.  
More generally (see \cite{Weidl_1999,Cazacu2016}), 
while the shifted operator $-\Delta - c_n / \varrho^{2}$ 
is critical in $L^2(\R^n)$ for all $n \geq 2$,
it becomes subcritical after adding a magnetic field to the Laplacian.

The purpose of the present paper is to study the influence of magnetic fields on the criticality properties of
the (sub-elliptic) Laplacian in the Heisenberg group~$\mathbb H^1$.
The latter is the foremost example of sub-Riemannian structure on $\mathbb R^3$
formally defined by the completely non-integrable distribution 
$\mathcal D := \operatorname{span}\{X,Y\}$, where 
\begin{equation}\label{fields}
    X := \partial_x - \frac y2\partial_z, \qquad 
    Y := \partial_y + \frac x2\partial_z,
\end{equation}
with $(x,y,z) \in \mathbb R^3$.
The directions of $\mathcal D$ are called \emph{horizontal directions}, and fixing on~$\mathcal D$ the metric for which $\{X,Y\}$ is an orthonormal frame allows to define a distance on $\bH^1$.
The associated Laplacian is 
the sub-elliptic operator $-\Delta := -X^2 - Y^2$, that has been extensively studied in the last fifty years due to its deep connections 
with diverse subjects; see, e.g., \cite{hormanderHypoelliptic1967a,Folland73, follandEstimates1974, JL88, Montgomery1995}.

Similarly to~\eqref{Hardy}, it is known due to
Garofalo and Lanconelli \cite{GarofaloLanconelli1990}
that the optimal Hardy-type inequality
\begin{equation}\label{eq:GL}
  -\Delta \geq \frac{r^2}{\rho^4}
  \qquad \mbox{in} \qquad
  L^2(\mathbb H^1)
\end{equation}
holds, 
where \(\rho(x,y,z):=\sqrt[4]{(x^2+y^2)^2+16z^2}\) 
is the Koranyi norm
and $r(x,y,z):=\sqrt{x^2+y^2}$ is the radial distance to the $z$-axis. 
Notice that $r^2/\rho^4 = |\nabla \rho|^2/\rho^2$, 
where $|{\nabla} \rho|^2 = |X\rho|^2 + |Y\rho|^2$ is the norm of the \emph{horizontal gradient} of $\rho$, associated with the sub-Riemannian structure of $\bH^1$.
Further results on Hardy-type inequalities on the Heisenberg group are presented in  \cite{NZY2001,D'Ambrosio2004,FranceschiPrandi20}. We refer to \cite{GoldsteinKombe2008,GKY2018,Ruzhansky2017,RS17b,CiattiRicciSundari07,CiattiCowlingRicci15,Loiudice18} for extensions to more general Carnot Groups, and to  \cite{ABFP,D'Ambrosio2003,D'Ambrosio2005,DGP2011,FPR20,Kogoj2016}
for extensions to sub-elliptic operators that are not necessarily induced by groups.

One of the main results of the present paper states that,
while the shifted operator $-\Delta - r^2 / \rho^{4}$ is critical in $L^2(\mathbb H^1)$,
it becomes subcritical after adding a magnetic field to the Laplacian.
In this way we establish a sub-Riemannian analogue of the celebrated
result of Laptev and Weidl~\cite{LW99}.
Other functional inequalities in the Heisenberg group 
and their respective magnetic improvements are also 
investigated.

\section{Main results}
%

\subsection{Horizontal magnetic fields on the Heisenberg group}
To state our main results, 
we first need to properly introduce magnetic fields in the Heisenberg group.
The notion of magnetic fields and associated magnetic operators 
are naturally formulated in terms of differential forms.

On a Riemannian manifold~$M$,
a \emph{magnetic field}~$B$ is a closed $2$-form, 
i.e., $dB = 0$ where $d$ is the exterior differential.
The corresponding \emph{magnetic potential}~$A$ 
is a $1$-form such that $dA=B$.
The latter allows one to define the classical 
and quantum dynamics 
under the influence of the magnetic field
as follows.
Classically, given a Hamiltonian $h\in C^\infty(T^*M)$, 
the magnetic Hamiltonian is 
$h_A(p,q):= h(p+A(q),q)$,
where~$q$ and~$p$ are the (generalized) coordinates and momenta on~$M$; 
the magnetic classical trajectories 
are defined via standard Hamiltonian equations.
The magnetic quantum dynamics are then obtained 
by the Schr\"odinger equation with respect to 
an appropriate quantization~$H_A$ of~$h_A$.
In the case of~$h$ being the free Hamiltonian on~$M$,
the quantum magnetic Hamiltonian~$H_A$ 
coincides with the \emph{magnetic Laplacian}
\begin{equation}\label{Laplace}
  -\Delta_A := (-i\nabla + A)^2
  \,,
\end{equation}
where~$\nabla$ is the Riemannian gradient;
or equivalently, $\Delta_A = {\nabla_{\!A}}^2$, 
where $\nabla_{\!A} := \nabla+iA$ is the \emph{magnetic gradient}.
Of course, $-\Delta_0 = -\Delta$.
 
In the context of the Heisenberg group~$\mathbb{H}^1$, 
it is natural to define the magnetic fields in such a way that the corresponding classical magnetic trajectories retain 
the property of being \emph{horizontal}. 
As we will show later in Section~\ref{sec:horizontal}, 
this is not really a requirement inasmuch it is a natural consequence of the definition of classical magnetic trajectories. 
This naturally leads to define magnetic fields in the context of 
the \emph{Rumin complex} \cite{Rumin1994}. 
The latter roughly corresponds to consider vector potentials modulo the contact form~$\omega$ defining~$\mathcal D$ (i.e., $A = A_x dx + A_y dy \mod \omega$)  and magnetic fields as horizontal $2$-forms (i.e., such that $B\wedge \omega = 0$). 
Then, formally as above, the magnetic (sub-)Laplacian in~$\mathbb{H}^1$ 
is given by~\eqref{Laplace} with the only difference
that $\nabla u := (Xu) X + (Yu)Y$ is the now the horizontal gradient. 
Let us remark that the Rumin complex has already been applied to derive Maxwell's Equations in the more general setting of Carnot groups in \cite{FranchiTesi12, FOV13}.

To the best of our knowledge, 
the notion of the Heisenberg sub-Laplacian with a magnetic field has not been studied yet, except for \cite{Xiao2015}. We discuss this paper in Appendix~\ref{app:xiao}, where we raise a crucial criticism of its main result. We also mention \cite{AermarkLaptev11} where the authors investigate improved Hardy inequalities in the Grushin setting in presence of Aharonov--Bohm magnetic fields. However,  the magnetic operator introduced in that work, when interpreted in our setting, corresponds to a magnetic sub-Laplacian with an extra electric potential.

\subsection{Uniform magnetic fields}
In a Riemannian manifold~$M$,
it is well known that the magnetic field has a deep influence
on spectral properties of the magnetic Laplacian~$-\Delta_A$
(when realized as a self-adjoint operator in $L^2(M)$).
Roughly, the magnetic field acts as a repulsive interaction,
which is known as the \emph{diamagnetic effect} in quantum mechanics. 

The diamagnetic effect 
is best seen in the case of 
uniform (or homogeneous) fields in the plane~$\R^2$,
i.e., $B(x,y)=b \, dx\wedge dy$ with $b\in \mathbb R$.
In this case, one has
(see \cite[Corol.~2.5]{Mohamed-Raikov} 
and \cite[Thm.~3]{Montgomery1995} for higher dimensions
and a Riemannian counterpart, respectively)
\begin{equation}\label{Poincare}
  \inf \sigma(-\Delta_A) = |B|
  \qquad \mbox{in} \qquad 
  L^2(\R^2)
  \,,
\end{equation}
where~$A$ is any 1-form in~$\R^2$ such that $B=dA$
and we write $|B|:=|b|$.
Since $\sigma(-\Delta) = [0,\infty)$,
it is clear that any non-trivial uniform magnetic field uplifts the bottom of the spectrum.
This fact particularly implies that the heat semigroup
associated with $-\Delta_A$ in $L^2(\R^2)$ 
admits a faster decay rate once the magnetic field is turned on.
As an immediate consequence of~\eqref{Poincare},
one has the optimal Poincar\'e-type inequality
$-\Delta_A \geq |B|$ in the sense of quadratic forms in $L^2(\R^2)$.
This inequality extends to the case of variable 
magnetic fields of strength bounded from below 
by the positive constant~$|b|$.
 
Our first result is the following generalization
of~\eqref{Poincare} to the Heisenberg group.
\begin{theorem}\label{thm:uniform}
Let $B(x,y,z)=b_1 \, dx\wedge \omega + b_2 \, dy\wedge \omega$
with $b := (b_1,b_2)\in \mathbb R^2$.
Then 
\begin{equation}\label{Poincare.Heisenberg}
  \inf \sigma(-\Delta_A) = c \, |B|^{2/3}
  \qquad \mbox{in} \qquad 
  L^2(\mathbb H^1) \,,
\end{equation}
where~$A$ is any 1-form in~$\mathbb H^1$ such that $B=dA$, 
$|B| := \sqrt{b_1^2 + b_2^2}$, and $c>0$ is a universal constant.
\end{theorem}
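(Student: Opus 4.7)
The strategy is to combine a natural gauge choice, the Heisenberg scaling, and a hidden magnetic translation symmetry to reduce $\inf\sigma(-\Delta_A)$ to the bottom of the spectrum of a one-parameter family of Montgomery-type operators
\[
 \mathcal H_\alpha := -\partial_y^2 + \Bigl(\alpha + \tfrac{1}{2}y^2\Bigr)^2 \qquad \text{on } L^2(\mathbb R).
\]
Since planar rotations $(x,y)\mapsto R_\theta(x,y)$ extend to automorphisms of $\mathbb H^1$, I would first reduce to $b_2=0$ and $b_1=|B|=:b>0$, and fix the Landau-type potential $A = -bz\,dx - \tfrac{b}{4}x^2\,dy$; a direct computation gives $dA = b\,dx\wedge\omega$, so $A$ is an admissible Rumin potential for $B$. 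The anisotropic Heisenberg dilation $\delta_\lambda(x,y,z)=(\lambda x,\lambda y,\lambda^2 z)$ is an automorphism of $\mathbb H^1$ under which $dx\wedge\omega$ scales as $\lambda^3$, so conjugating $-\Delta_A$ by its unitary implementation on $L^2(\mathbb H^1)$ yields $\lambda^{-2}$ times the magnetic Laplacian with field strength $b/\lambda^3$. Choosing $\lambda=b^{1/3}$ gives $\inf\sigma(-\Delta_A)=c\,b^{2/3}$, and the whole problem reduces to proving $0<c<\infty$ in the normalized case $b=1$.

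Although the chosen gauge breaks $z$-translation invariance, the field itself is $z$-invariant, and I would introduce the gauge-compensated generator
\[
 \tilde T := -i\partial_z - bx,
\]
which, by a direct commutator computation exploiting $[X,x]=1$ and $[z,\partial_z]=-1$, commutes with both $P_j=-iX_j+A_j$ and hence with $-\Delta_A$. Its generalized eigenfunctions $u_\mu(x,y,z)=e^{i(\mu+bx)z}v(x,y)$ yield a direct-integral decomposition $L^2(\mathbb H^1)\cong\int^{\oplus}_{\mathbb R} L^2(\mathbb R^2_{x,y})\,d\mu$ (essentially the partial Fourier transform in $z$ composed with the shift $\xi\mapsto\xi-bx$). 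Plugging the ansatz into $P_j u_\mu$ shows that on each fiber $-\Delta_A$ reduces to a planar magnetic Schrödinger operator with linearly varying magnetic field $B_\mu(x)=\mu+bx$.

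To diagonalize this planar operator I would switch by a gauge transformation to $\tilde A_\mu=(\mu x+\tfrac{b}{2}x^2)\,dy$ and Fourier-transform in $y$, obtaining a further family of one-dimensional Schrödinger operators
\[
 H_{\mu,\eta} = -\partial_x^2 + \bigl(\eta+\mu x+\tfrac{b}{2}x^2\bigr)^2.
\]
Completing the square and shifting $x\mapsto x-\mu/b$ absorbs $(\mu,\eta)$ into the single parameter $\gamma:=\eta-\mu^2/(2b)$, and rescaling $x=b^{-1/3}y$ identifies the operator with $b^{2/3}\mathcal H_\alpha$ for $\alpha=b^{-1/3}\gamma$. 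Taking the infimum over $\mu,\eta$ (equivalently, over $\alpha\in\mathbb R$) then yields
\[
 \inf\sigma(-\Delta_A) = b^{2/3}\inf_{\alpha\in\mathbb R} E_0(\alpha),
\]
where $E_0(\alpha)$ is the lowest eigenvalue of $\mathcal H_\alpha$.

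The last step is to verify $c:=\inf_\alpha E_0(\alpha)\in(0,\infty)$. Since $V_\alpha(y)=(\alpha+y^2/2)^2$ diverges like $y^4$, each $\mathcal H_\alpha$ has compact resolvent and $E_0$ is continuous; the pointwise bound $V_\alpha\geq\alpha^2$ gives $E_0(\alpha)\geq\alpha^2$ for $\alpha\geq 0$, while a harmonic approximation at the minima of the double-well for $\alpha<0$ gives $E_0(\alpha)\sim\sqrt{-2\alpha}\to\infty$ as $\alpha\to-\infty$, so the infimum is attained at some $\alpha_*\in\mathbb R$. Strict positivity then follows because $E_0(\alpha_*)=0$ would force a nonzero $L^2$ eigenfunction $\psi$ with $\int|\psi'|^2+V_{\alpha_*}|\psi|^2=0$, hence $\psi$ constant and supported on the nowhere-dense set $\{V_{\alpha_*}=0\}$, i.e.\ $\psi\equiv 0$. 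The main obstacle I anticipate is the rigorous handling of the two superimposed continuous direct-integral decompositions (with respect to $\tilde T$ and then $-i\partial_y$) and the accompanying gauge equivalences, which must be set up carefully on the natural Folland--Stein form domain of $-\Delta_A$; the remaining one-dimensional analysis is standard.
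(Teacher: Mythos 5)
Your proposal is correct in substance and lands on exactly the same one-parameter family of quartic oscillators $-\partial_t^2+(\tfrac{b}{2}t^2+g)^2$ that the paper analyzes in its Lemma~\ref{lem:quartic}, but it gets there by a genuinely different route. The paper works in the gauge $A=|B|\tfrac{x^2}{2}dy$, performs the coordinate change $z\mapsto w=z+\tfrac{xy}{2}$ so that $X=\partial_x$, $Y=\partial_y+x\partial_w$, and then takes an ordinary Fourier transform in the two translation-invariant variables $(y,w)$; the $|B|^{2/3}$ scaling is extracted only at the very end, by the rescaling $s=b^{1/3}t$ inside the one-dimensional lemma. You instead (a) pull the scaling to the front via the anisotropic dilations $\delta_\lambda$, reducing to $b=1$; and (b) replace the coordinate change by a magnetic translation $\tilde T=-i\partial_z-bx$ adapted to your Landau-type gauge $-bz\,dx-\tfrac{b}{4}x^2\,dy$ (which I checked is indeed a valid Rumin potential for $b\,dx\wedge\omega$, and $\tilde T$ does commute with both $X_A$ and $Y_A$). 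Your fibration makes visible the intermediate object that the paper only alludes to in Remark~\ref{rem:montgomery}: each fiber is a planar magnetic Schr\"odinger operator with linearly varying field $\mu+bx$, i.e.\ precisely Montgomery's setting. That is a conceptual gain — it explains \emph{why} the $2/3$ power and the quartic wells appear — at the cost of two superimposed continuous direct integrals whose essential-infimum bookkeeping you correctly flag as the delicate point; the paper's coordinate trick avoids the magnetic translation entirely and is shorter. The endgame (compact resolvent, continuity of $E_0$, positivity, divergence as $\alpha\to-\infty$) coincides with the paper's.

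Two minor repairs. First, your dilation bookkeeping is inverted: as written, ``$\lambda^{-2}$ times the Laplacian with field $b/\lambda^3$'' with $\lambda=b^{1/3}$ yields $b^{-2/3}c$, not $b^{2/3}c$; the correct relation is $\inf\sigma(-\Delta_{A,b})=\lambda^{2}\inf\sigma(-\Delta_{A',b\lambda^{-3}})$. Second, the harmonic approximation at the well minima gives $E_0(\alpha)\sim\sqrt{-2\alpha}$ only as an upper bound plus a heuristic; for the lower bound needed to conclude $E_0(\alpha)\to+\infty$ you should use a pointwise comparison such as the paper's $V_\alpha(y)=\alpha^2\bigl(\tfrac{|y|}{\sqrt{2|\alpha|}}-1\bigr)^2\bigl(\tfrac{|y|}{\sqrt{2|\alpha|}}+1\bigr)^2\geq\bigl(\sqrt{|\alpha|/2}\,|y|-|\alpha|\bigr)^2$, which dominates a shifted harmonic oscillator with ground energy $\sqrt{|\alpha|/2}$. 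Neither point affects the validity of the overall strategy.
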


The non-linear growth in the strength~$|b|$ 
of the magnetic field in~\eqref{Poincare.Heisenberg}
is ultimately related with the results obtained in \cite{Montgomery1995}, 
see Remark~\ref{rem:montgomery} below.

As above, \eqref{Poincare.Heisenberg} implies 
the optimal Poincar\'e-type inequality
$-\Delta_A \geq c \, |B|^{2/3}$ 
in the sense of quadratic forms in $L^2(\mathbb{H}^1)$.
This is again a non-trivial diamagnetic improvement 
due to the magnetic field,
because $\sigma(-\Delta) = [0,\infty)$
in the Heisenberg case as well.

Note that the results~\eqref{Poincare} and \eqref{Poincare.Heisenberg}
depend on~$B$ only, while they are independent of~$A$.
This is natural and physically expected because 
of the vanishing of the first cohomology group of~$\R^n$. 
More specifically, given any $0$-form $f$,
the operators $-\Delta_A$ and $-\Delta_{A+df}$
are unitarily equivalent, so isospectral.
This is known as \emph{gauge invariance}
of the magnetic field in quantum mechanics.

\subsection{Aharonov--Bohm magnetic potentials}
Interesting and unexpected phenomena appear 
for more complex geometries when the gauge invariance 
does not hold. 
In this setting, the non-exact vector potentials~$A$ 
yielding null magnetic fields 
(i.e., such that $dA =0$ but $A\neq df$ for any smooth function $f$)
are known as \emph{Aharonov--Bohm potentials}.

The simplest example is the punctured plane $\R^2 \setminus \{0\}$ 
with the magnetic potential 
$A_\alpha := \alpha \,d\varphi$ where $\alpha\in \R$ 
and $(\varrho,\varphi) \in (0,\infty) \times \mathbb{S}^1$ are polar coordinates.
It is important to notice that $A_\alpha$ is actually
a strongly singular vector potential;
indeed, in Cartesian coordinates one has 
$A_\alpha = \alpha \, \varrho(x,y)^{-2} \, (x dy - ydx)$.
In particular, $A_\alpha$ is not locally square integrable in~$\R^2$. 

Although classically invisible
(indeed, $dA_\alpha = 0$ in $\R^2 \setminus \{0\}$), 
the Aharonov--Bohm potential~$A_\alpha$ 
still has a strong influence on 
spectral properties of~$-\Delta_{A_\alpha}$,
and therefore on quantum dynamics.
Indeed, it can break the essential self-adjointness of 
the magnetic Laplacian \cite{AdamiTeta,dabrowski1998aharonov}.
Moreover, even in the case of the Friedrichs extension, 
where the spectrum stays unchanged, 
it was observed by Laptev and Weidl in~\cite{LW99} 
that the presence of the Aharonov--Bohm potential  
makes the magnetic Laplacian subcritical.
More specifically, one has the optimal magnetic Hardy inequality
\begin{equation}\label{Hardy.LW}
  -\Delta_{A_\alpha} \ge \frac{d(\alpha, \mathbb Z)^2}{\varrho^2} 
  \qquad \mbox{in} \qquad 
  L^2(\R^2 \setminus \{0\}) 
  \,.
\end{equation}
The case of integer flux quanta,
i.e.\ $\alpha \in \mathbb Z$, must be excluded,
because $-\Delta_{A_\alpha}$ with any such~$\alpha$
is unitarily equivalent to $-\Delta$, which is critical. 

Our next result is the following generalization
of~\eqref{Poincare} to the Heisenberg group.
In this case, it is natural to work in cylindrical coordinates 
$(r,\varphi,z) \in (0,\infty) \times \mathbb{S}^1 \times \R$.
\begin{theorem}\label{thm:lw2-heis}
Let $A$ be an Aharonov--Bohm potential on 
$\mathbb {H}^1\setminus\mathcal Z$
with $\mathcal Z := \{(0,0,z) : z \in \R\}$. 
Then, up to gauge invariance, 
$A = A_\alpha := \alpha \, d\varphi \mod \omega$
for some $\alpha\in \mathbb R$ and the inequality  
\begin{equation}\label{eq:Hardy.Delta}
  -\Delta_{A_\alpha} \ge \frac{d(\alpha, \mathbb Z)^2}{r^2} 
  \qquad \mbox{in} \qquad 
  L^2(\mathbb {H}^1 \setminus \mathcal Z) 
\end{equation}
holds in the sense of quadratic forms,
where $r(x,y,z):=\sqrt{x^2+y^2}$. 
Moreover, the inequality is optimal in the sense that 
no positive function can be added to
the right-hand side of~\eqref{eq:Hardy.Delta}.
\end{theorem}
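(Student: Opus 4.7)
The plan has four ingredients: classify Aharonov--Bohm potentials using the topology of $\mathbb H^1\setminus \mathcal Z$; Fourier-decompose in the angular variable $\varphi$; apply a Hardy-type inequality for a family of magnetic Folland--Stein operators (proved separately in the paper) mode by mode; and deduce optimality from a formal-ground-state argument.

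First, since $\mathbb H^1\setminus \mathcal Z$ retracts onto a circle, its first de Rham cohomology is one-dimensional and generated by $d\varphi$; within the Rumin complex, closed horizontal $1$-forms modulo exact ones are therefore parameterized by a single real number $\alpha$, and we may take $A_\alpha = \alpha\, d\varphi \bmod \omega$ as canonical representative. Gauge invariance under integer shifts of $\alpha$ reduces the problem to $|\alpha| \leq 1/2$; the case $\alpha \in \mathbb Z$ is trivial. Next, in cylindrical coordinates $(r,\varphi,z)$ a direct computation (using the rotated horizontal frame $\partial_r$ and $r^{-1}(\partial_\varphi + (r^2/2)\partial_z)$) gives
\[
  |\nabla_{A_\alpha} u|^2 = |\partial_r u|^2 + \frac{|(\partial_\varphi + i\alpha)u|^2}{r^2} + \frac{r^2}{4}|\partial_z u|^2 + \mathrm{Re}\bigl[(\partial_\varphi + i\alpha)u\,\overline{\partial_z u}\bigr].
\]
Expanding $u(r,\varphi,z) = (2\pi)^{-1/2}\sum_k u_k(r,z)\, e^{ik\varphi}$ diagonalizes the associated quadratic form: setting $\ell_k := k+\alpha$ and completing the square in $\partial_z$, the form becomes $\sum_k Q_k[u_k]$, where
\[
  Q_k[u_k] = \int_0^\infty \int_{\mathbb R} \Bigl( |\partial_r u_k|^2 + \frac{r^2}{4}\Bigl| \Bigl(\partial_z + \frac{2i\ell_k}{r^2}\Bigr)u_k\Bigr|^2 \Bigr)\, r\, dr\, dz
\]
is the quadratic form of a magnetic Folland--Stein operator on the half-plane with magnetic potential $\tilde A_k = (2\ell_k/r^2)\, dz$.

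The heart of the argument is the mode-wise lower bound $Q_k[u_k] \geq d(\alpha, \mathbb Z)^2 \int |u_k|^2/r\, dr\, dz$ for every $k \in \mathbb Z$, which is the Hardy-type inequality for the Folland--Stein operator alluded to in the introduction. After a further Fourier transform in $z$, it reduces to non-negativity, for every $\zeta \in \mathbb R$, of the one-dimensional operator $-\partial_r^2 - r^{-1}\partial_r + (\ell_k^2 - d(\alpha, \mathbb Z)^2)/r^2 + r^2\zeta^2/4 + \ell_k\zeta$ on $L^2((0,\infty), r\, dr)$. This is a radial two-dimensional isotropic harmonic oscillator of frequency $|\zeta|$ in virtual angular momentum $m = \sqrt{\ell_k^2 - d(\alpha, \mathbb Z)^2}$, shifted by the constant $\ell_k\zeta$; its ground-state energy $|\zeta|(m+1) + \ell_k\zeta$ is non-negative precisely when $\sqrt{\ell_k^2 - d(\alpha, \mathbb Z)^2} + 1 \geq |\ell_k|$, which is an elementary consequence of $d(\alpha, \mathbb Z) \leq 1/2$. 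Summing over $k$ yields the claimed lower bound.

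For optimality, let $k^* \in \mathbb Z$ realize $|k^* + \alpha| = d(\alpha, \mathbb Z)$. For any real $\chi \in C_c^\infty(\mathbb H^1\setminus \mathcal Z)$, a direct calculation exploiting that $e^{ik^*\varphi}$ is a formal ground state of the Hardy operator gives $|\nabla_{A_\alpha}(e^{ik^*\varphi}\chi)|^2 = |\nabla\chi|^2 + d(\alpha, \mathbb Z)^2 \chi^2/r^2$. Hence, if $-\Delta_{A_\alpha} \geq d(\alpha, \mathbb Z)^2/r^2 + W$ held with $W\geq 0$, $W \not\equiv 0$, testing with $u = e^{ik^*\varphi}\chi$ would force $\int |\nabla\chi|^2 \geq \int W\chi^2$ for every such $\chi$; exhibiting a null sequence $\chi_n$ built from logarithmic cut-offs of the Koranyi norm (away from $\mathcal Z$) and combining with monotone convergence yields the desired contradiction $\int W = 0$. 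The main obstacle throughout is the Folland--Stein Hardy inequality, whose proof must delicately balance the radial, axial, and magnetic terms to recover the optimal constant; granted that auxiliary result, the rest is careful bookkeeping with Fourier series and elementary one-dimensional spectral theory.
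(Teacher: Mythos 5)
Your lower-bound argument is correct but follows a genuinely different route from the paper's. The classification of Aharonov--Bohm potentials and the Fourier decomposition in $\varphi$ coincide with Proposition~\ref{prop:AB} and~\eqref{eq:decomp}. For the mode-wise inequality, however, the paper never takes a Fourier transform in $z$: it expands $0\le \int |Ru+\varepsilon(-i\tfrac r2\partial_z+\tfrac{\alpha-\alpha_*}{r})u|^2$ with $\alpha_*=\max\{-1,\min\{1,\alpha\}\}$ and chooses the sign $\varepsilon=-\operatorname{sgn}\langle u,-i\partial_z u\rangle$ to make the offending cross term non-positive. You instead diagonalize in $z$ and identify each fiber with the shifted radial oscillator $-\partial_r^2-r^{-1}\partial_r+(\ell_k^2-d^2)/r^2+r^2\zeta^2/4+\ell_k\zeta$ on $L^2(r\,dr)$, where $d=d(\alpha,\bZ)$, whose Friedrichs ground-state energy $|\zeta|\bigl(\sqrt{\ell_k^2-d^2}+1\bigr)+\ell_k\zeta$ is non-negative because $\sqrt{\ell_k^2-d^2}+1\ge|\ell_k|$; the fiberwise lower bound is rigorous via the ground-state substitution $u=r^{m}e^{-|\zeta|r^2/4}v$ with $m=\sqrt{\ell_k^2-d^2}$. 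This is complete and arguably more transparent, as it shows exactly where the constant comes from. One caveat: the inequality you invoke is \emph{not} the paper's Folland--Stein Hardy inequality (Theorem~\ref{thm:GL-FS}), which carries the weight $r^2/\rho^4$ and serves a different purpose; your oscillator computation is the actual proof and cannot be outsourced to that result.

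The optimality argument, by contrast, has a genuine gap. From the (correct) identity $|(\dH+iA_\alpha)(e^{ik^*\varphi}\chi)|^2=|\nH\chi|^2+d(\alpha,\bZ)^2\chi^2/r^2$ for real $\chi$, you reduce to showing that $\int_{\bH^1}|\nH\chi|^2\ge\int_{\bH^1}W\chi^2$ for all real $\chi\in C^\infty_c(\bH^1\setminus\cZ)$ forces $W\equiv0$, and you propose to contradict it with a null sequence of logarithmic cut-offs of the Koranyi norm. No such null sequence exists: the Heisenberg sub-Laplacian is subcritical, and by the Garofalo--Lanconelli inequality~\eqref{eq:GL} one has $\int|\nH\chi|^2\ge\int(r^2/\rho^4)\chi^2$, so any sequence tending to $1$ on a fixed compact set has gradient energy bounded below by a positive constant (a logarithmic cut-off in $\rho$ costs on the order of $n^2/\log^2 n$, the homogeneous dimension being $4$, not $2$). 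Worse, the inequality you derive is actually \emph{satisfied} by the positive function $W=r^2/\rho^4$, so the test class $\{e^{ik^*\varphi}\chi : \chi\ \text{real}\}$ can never yield the desired contradiction. The paper's sharpness argument differs in two essential respects: it normalizes by the Hardy weight $\|\psi/r\|^2$ --- against which logarithmic cut-offs in $r$, taken with respect to the two-dimensional measure $r\,dr$, \emph{are} asymptotically null --- and it spreads the trial functions in $z$ over scales $n^4$ so that the contribution of $\tfrac{r^2}{4}|\partial_z\psi_n|^2$ becomes negligible, obtaining $Q_{m_*}(\psi_n)/\|\psi_n/r\|^2\to d(\alpha,\bZ)^2$. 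A construction of this type is needed; the formal-ground-state reduction alone cannot establish optimality.
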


This is indeed a non-trivial magnetic improvement because
(even if $-\Delta$ in $L^2(\mathbb {H}^1)$ is subcritical, 
see~\eqref{eq:GL})
there exists no positive number~$c$ 
such that $-\Delta \geq c/r^2$ in $L^2(\mathbb {H}^1 \setminus \mathcal Z)$,
see \cite{Ruzhansky2017}. On top of this, Theorem~\ref{thm:lw2-heis} will be instrumental to the proof of Theorem~\ref{thm:impro}, where we show that the critical operator $-\Delta -r^2/\rho^4$ becomes subcritical after adding a magnetic field to the Laplacian.
 
The proof of Theorem~\ref{thm:lw2-heis}, presented in Section~\ref{ss:AB}, requires a careful analysis of the commutation relations 
between the Aharonov--Bohm magnetic potential and both horizontal vector fields. This is in contrast with 
the relatively simple proof of~\eqref{Hardy.LW} 
based on the polar decomposition of the Euclidean gradient 
into a ``radial" and an ``angular" direction,
which does not exist in the Heisenberg group 
(see \cite{FranceschiPrandi20}, and Remark~\ref{rmk:non-orthogonal} below).

In the Euclidean case, 
an analogous decomposition in hypercylindrical coordinates
$(r,\varphi,z) \in (0,\infty) \times \mathbb{S}^1 \times \R^{n-2}$
allows for even stronger improvements. 
Namely, in \cite{FKLV20}, 
the authors consider Aharonov--Bohm potentials 
$A_\alpha := \alpha \, d\varphi$
in $\mathbb{R}^n\setminus\{r=0\}$, 
where $n\geq 2$ and 
$r(x) := \sqrt{x_1^2+x_{2}^2}$
is the distance to the subspace $\{r=0\}$ of dimension $n-2$, 
and prove the following inequality for the Euclidean magnetic Laplacian:
\begin{equation}\label{Hardy.LW.general}
    -\Delta_{A_\alpha} - \frac{c_n}{\varrho^2} 
    \ge  \frac{d(\alpha,\mathbb Z)^2}{r^2} 
    \qquad \mbox{in} \qquad 
    L^2(\mathbb{R}^n\setminus\{r=0\}) 
    \,,
\end{equation}
where $\varrho(x) := \sqrt{x_1^2+\dots+x_{n}^2}$
is the Euclidean distance as in~\eqref{Hardy}.
That is, the Aharonov--Bohm potential improves 
the classical Hardy inequality~\eqref{Hardy} 
with a weight singular at the origin
by a term that is singular on the subspace $\{r=0\}$. 
For $n=2$, \eqref{Hardy.LW.general} reduces to~\eqref{Hardy.LW}.

Motivated by this fact, in Section~\ref{ss:follandstein}, 
we study how Aharonov--Bohm potentials 
of Theorem~\ref{thm:lw2-heis} interact with 
the Hardy-type inequality~\eqref{eq:GL} due to Garofalo and Lanconelli,
the latter being classical in the case of the Heisenberg group. 
The main result of this section is Proposition~\ref{prop:improvement-GL} where we show that the following improvement holds
\emph{under suitable symmetry assumptions}:
\begin{equation}\label{eq:impro-rot-intro}
  -\Delta_{A_\alpha} -\frac{r^2}{\rho^4}
  \ge  d(\alpha,\bZ)^2 \ \frac{1-|\nabla\rho|^4}{r^2}
  \qquad \mbox{in} \qquad 
  L^2(\mathbb{H}^1 \setminus \mathcal Z) 
  \,.
\end{equation}
%
Here, $\rho$ is the Koranyi norm, and $\nH\rho$ its horizontal gradient.
In particular, the above inequality holds for functions that are symmetric with respect to rotations around the $z$-axis, 
or with respect to the reflection $(x,y,z)\mapsto(x,y,-z)$. We stress that these symmetry assumptions frequently appear in functional inequality concerning the Heisenberg group, see, e.g., \cite{montiRearrangements2014}.

The crucial observation allowing to derive 
the improvement~\eqref{eq:impro-rot-intro} 
is the connection detailed in Lemma~\ref{l:quadform} below
between the Aharonov--Bohm magnetic Laplacian $\Delta_{A_\alpha}$ 
and the Folland--Stein operator~\cite{follandEstimates1974}
\begin{equation}\label{Folland}
    \cL_\alpha := -\Delta - i\alpha \partial_z
    \qquad \mbox{in} \qquad 
    L^2(\mathbb{H}^1 \setminus \mathcal Z) 
    \,.
\end{equation}
This allows us to deduce~\eqref{eq:impro-rot-intro} 
from the following Hardy-type inequality for $\cL_\alpha$, 
proved in Section~\ref{ss:follandstein}, 
which is interesting in its own right.
\begin{theorem}\label{thm:GL-FS}
Let $\alpha\in (-1,1)$. The inequality  
\begin{equation}
    \cL_\alpha \ge (1-\alpha^2) \, \frac{r^2}{\rho^4}
    \qquad \mbox{in} \qquad 
    L^2(\mathbb{H}^1 \setminus \mathcal Z) 
\end{equation}
holds in the sense of quadratic forms.
Moreover, the inequality is optimal in the sense that 
no positive function can be added to
the right-hand side of~\eqref{thm:GL-FS}.
\end{theorem}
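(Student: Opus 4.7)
The strategy is a \emph{ground-state substitution}, adapted to the complex setting induced by the first-order term $-i\alpha\partial_z$. I would begin by recasting $\cL_\alpha$ in terms of the complexified horizontal fields $Z := (X - iY)/2$ and $\bar Z := (X + iY)/2$, which satisfy $X^2+Y^2 = 2(Z\bar Z + \bar Z Z)$ and $[Z,\bar Z] = (i/2)\partial_z$. These give the factorisation
\[
\cL_\alpha = -2(1+\alpha)\, Z\bar Z \;-\; 2(1-\alpha)\, \bar Z Z,
\]
so that (using $Z^{*} = -\bar Z$) the associated quadratic form
$\langle \cL_\alpha u, u\rangle = 2(1+\alpha)\|\bar Z u\|^2 + 2(1-\alpha)\|Z u\|^2$
is manifestly non-negative for $\alpha \in (-1,1)$.

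The heart of the proof is the identification of an explicit complex-valued \emph{virtual ground state},
\[
u_*(x,y,z) := (r^2 - 4iz)^{-(1-\alpha)/4}\,(r^2 + 4iz)^{-(1+\alpha)/4},
\]
inspired by the Folland--Stein fundamental solution. Writing $w := x+iy$, the Cauchy--Riemann-like identities $Z(r^2-4iz)=0$, $\bar Z(r^2+4iz)=0$, $Z(r^2+4iz)=2\bar w$, $\bar Z(r^2-4iz)=2w$ make the computation of $Zu_*, \bar Zu_*$ immediate, and substituting into the factorised form of $\cL_\alpha$ (the two contributions combine since $(r^2+4iz)+(r^2-4iz)-r^2=r^2$ and $(r^2+4iz)(r^2-4iz)=\rho^4$) one obtains the pointwise identity
\[
\cL_\alpha u_* = (1-\alpha^2)\,\frac{r^2}{\rho^4}\, u_*, \qquad |u_*|^2 = \rho^{-2},
\]
which identifies $(1-\alpha^2)r^2/\rho^4$ as the correct Hardy weight.

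For $u = u_* v$ with $v \in C_c^\infty(\bH^1 \setminus \mathcal Z)$, setting $P := Zu_*/u_*$ and $Q := \bar Zu_*/u_*$, one has $Zu = u_*(Zv + Pv)$ and $\bar Zu = u_*(\bar Zv + Qv)$. The quadratic-form counterpart of the eigenvalue relation above reads $2(1-\alpha)|P|^2 + 2(1+\alpha)|Q|^2 = (1-\alpha^2)r^2/\rho^4$ pointwise, so that expanding the squares in $\langle\cL_\alpha u, u\rangle$ produces precisely $(1-\alpha^2)\int (r^2/\rho^4)|u|^2\, dV$ at zeroth order in $v$, with the remaining contribution decomposing as
\[
\langle \cL_\alpha u, u\rangle - (1-\alpha^2)\int\frac{r^2}{\rho^4}|u|^2\, dV \;=\; \int|u_*|^2 \bigl[2(1-\alpha)|Zv|^2 + 2(1+\alpha)|\bar Zv|^2\bigr]\, dV + \mathcal R[v],
\]
where $\mathcal R[v]$ gathers the cross-terms $4(1-\alpha)\operatorname{Re}\!\int|u_*|^2 \bar P\bar v Zv + 4(1+\alpha)\operatorname{Re}\!\int|u_*|^2 \bar Q\bar v \bar Zv$.

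The main obstacle is showing that $\mathcal R[v]$ does not break positivity, since the individual cross integrals are indefinite. I would handle it by integration by parts via $Z^{*} = -\bar Z$, exploiting the identity $|u_*|^2 \bar P = u_*\,\overline{Zu_*} = u_* \bar Z \bar u_*$ (from $\overline{Zu_*} = \bar Z\bar u_*$ as $X,Y$ are real) together with the explicit CR-type formulas for $\bar Z \bar u_*$ and $Z\bar u_*$; these manipulations recognise the full right-hand side as $\int|u_*|^2\bigl[2(1-\alpha)|Zv+Pv|^2 + 2(1+\alpha)|\bar Zv+Qv|^2\bigr]\,dV \geq 0$, which saturates the Hardy weight only when $v \equiv 1$. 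Finally, optimality follows from the standard criticality argument: against a hypothetical non-negative weight $W$ added to $(1-\alpha^2)r^2/\rho^4$, testing with a regularised sequence $u_n := u_*\chi_n$ (logarithmic cut-offs $\chi_n \nearrow 1$ away from $\mathcal Z$ and infinity) would yield $\langle(\cL_\alpha - (1-\alpha^2)r^2/\rho^4)u_n,u_n\rangle \sim \int|u_*|^2|\nabla\chi_n|^2 \to 0$, while $\int W|u_*|^2|\chi_n|^2 \to +\infty$ since $|u_*|^2 = \rho^{-2}$ is non-integrable on $\bH^1\setminus\mathcal Z$, forcing $W \equiv 0$.
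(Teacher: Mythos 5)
Your strategy is, at bottom, the same ground-state substitution as the paper's: your virtual ground state $u_*=(r^2-4iz)^{-(1-\alpha)/4}(r^2+4iz)^{-(1+\alpha)/4}$ equals $\rho^{-1}\exp\bigl(-i\tfrac{\alpha}{2}\arctan\tfrac{4z}{r^2}\bigr)=\rho_\alpha^{-1}$, which is exactly the function the paper divides out (it sets $v=u\rho_\alpha$). The packaging differs, and in one respect yours is cleaner: writing $\cL_\alpha=-2(1+\alpha)Z\bar Z-2(1-\alpha)\bar ZZ$ with $Z=(X-iY)/2$ (beware the clash with the paper's Reeb field $Z=\partial_z$ and with the center $\mathcal Z$) turns the quadratic form into $2(1+\alpha)\|\bar Zu\|^2+2(1-\alpha)\|Zu\|^2$, so the residual gradient term after the substitution is a manifest sum of non-negative pieces for $|\alpha|\le1$; the paper instead must end with the Cauchy--Schwarz step $|\nabla^\perp\bar v\cdot\nabla v|\le|\nabla v|^2$, which is the same inequality in disguise. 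Your CR identities, the relations $Zu_*=Pu_*$, $\bar Zu_*=Qu_*$ with $P=-\tfrac{(1+\alpha)\bar w}{2(r^2+4iz)}$, $Q=-\tfrac{(1-\alpha)w}{2(r^2-4iz)}$, the eigenvalue identity $\cL_\alpha u_*=(1-\alpha^2)\tfrac{r^2}{\rho^4}u_*$, and the pointwise identity $2(1-\alpha)|P|^2+2(1+\alpha)|Q|^2=(1-\alpha^2)r^2/\rho^4$ all check out.

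The one step that does not survive as written is your conclusion. You say the manipulations ``recognise the full right-hand side as $\int|u_*|^2\bigl[2(1-\alpha)|Zv+Pv|^2+2(1+\alpha)|\bar Zv+Qv|^2\bigr]$'': that expression \emph{is} $\langle\cL_\alpha u,u\rangle$, so the statement is circular and yields only $\cL_\alpha\ge0$ (and $|a+b|^2\ge|b|^2$ is false in general, so you cannot drop the gradient parts to isolate the Hardy weight). What must actually be proved is that the cross term $\mathcal R[v]$ vanishes, and your toolkit does deliver this: $4(1-\alpha)|u_*|^2\bar P$ and $4(1+\alpha)|u_*|^2\bar Q$ are complex conjugates of one another, namely $-2(1-\alpha^2)h$ and $-2(1-\alpha^2)\bar h$ with $h=w/\bigl((r^2-4iz)\rho^2\bigr)$, whence $\mathcal R[v]=-2(1-\alpha^2)\,\mathrm{Re}\int h\,Z(|v|^2)\,dq$; integrating by parts ($Z^*=-\bar Z$) and computing $Z h=4iz/\rho^6$, which is purely imaginary, gives $\mathcal R[v]=0$. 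With that inserted, the difference of the two sides of the Hardy inequality equals $\int\rho^{-2}\bigl[2(1-\alpha)|Zv|^2+2(1+\alpha)|\bar Zv|^2\bigr]\ge0$ and the proof closes; this vanishing is the exact counterpart of the paper's integration by parts using $[X,Y]=\partial_z$ and the reality of $\mathrm{Im}\int|v|^2\rho^{-3}\partial_z\rho$. A last, minor point on optimality: for a general positive $W$ the quantity $\int W|u_*|^2\chi_n^2$ need not diverge (take $W$ compactly supported); what you need, and what holds, is that it stays bounded below by a positive constant while the shifted form evaluated on $u_*\chi_n$ tends to $0$.
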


\subsection{Mild magnetic fields}
We now turn our attention to physically more relevant magnetic fields, 
which lie in between the extreme situations of
uniform and Aharonov--Bohm fields.
We call them \emph{mild} for they are at the same time
\emph{regular}, 
in the sense that they are realized by smooth magnetic potentials 
(contrary to the Aharonov--Bohm potentials),
and \emph{local} in the sense that they vanish at infinity 
(contrary to the uniform fields).
Then $\sigma(-\Delta_A) = \sigma(-\Delta)$
and the quantification of the magnetic effects is more subtle.

In the Euclidean case, it is known that
$-\Delta_A-c_n/\varrho^2$ is subcritical in $L^2(\R^n)$ 
if $n \geq 2$ and $B$~is not identically equal to zero
(recall that $-\Delta-c_n/\varrho^2$ is critical
because of the optimality of~\eqref{Hardy}).
In the general setting, this was first observed 
by Weidl in~\cite{Weidl_1999},
who established the Hardy-type inequality 
$-\Delta_A-c_n/\varrho^2 \geq c(n,A,\Omega) \, \chi_\Omega$
with any compact subset $\Omega \subset \R^n$
and $c(n,A,\Omega)$ being a positive constant
depending on $n \geq 2$, $A \not= 0$ and~$\Omega$,
where $\chi_\Omega$ denoted the indicator function of~$\Omega$. 
The compactly supported Hardy weight on the right-hand side
of this inequality can replaced by a positive one
\cite[Thm.~1.1]{Cazacu2016}:
\begin{equation}\label{Hardy.Cazacu}
  -\Delta_A-\frac{c_n}{\varrho^2} 
  \geq \frac{c(n,B)}{1+r^2 \log^2 r}
  \qquad \mbox{in} \qquad 
  L^2(\R^n) 
  \,,
\end{equation}
valid for every smooth~$A$ such that $dA = B$,
where~$c(n,B)$ is a positive constant depending on
$n \geq 2$ and $B \not= 0$.

Under extra hypotheses, it is next possible to remove
the logarithm from the right-hand side of~\eqref{Hardy.Cazacu}
(see \cite[Thm.~3.2]{Cazacu2016} based on ideas of~\cite{LW99}).
In particular, this is the case if $n=2$, 
$B(x,y)=b(x,y) \, dx\wedge dy$ with a smooth function $b:\R^2 \to \R$
and the total \emph{magnetic flux}
\begin{equation}\label{flux}
  \Phi_B := \frac{1}{2\pi}\int_{\mathbb R^2} b(x,y)\,dxdy
\end{equation}
is not an integer.
A key observation is that, by Stokes theorem, the vector potential of a compactly supported magnetic field~$B$ can be chosen as 
the Aharonov--Bohm potential $\Phi_B\,d\varphi$ outside 
a compact neighborhood of the origin.

In the present setting of the Heisenberg group,
we are primarily concerned with improving the   
Hardy--Garofalo--Lanconelli inequality~\eqref{eq:GL} 
due to the presence of \emph{any} magnetic field.
\begin{theorem}\label{thm:impro}
Let $\Omega\subset \bH^1$ be a bounded open set 
with Lipschitz boundary, and $A$ be a vector potential of either  one of the following types:
\begin{enumerate}
    \item[(i)] a smooth vector potential on $\bH^1$ whose associated magnetic field $B=dA$ is such that $B\not\equiv 0$ on $\Omega$;
    \item[(ii)] an Aharonov--Bohm potential on $\bH^1\setminus\mathcal Z$ with non-integer flux (i.e., up to gauge invariance, $A = \alpha d\varphi\mod \omega$ with $\alpha \in \R\setminus\mathbb{Z}$).
\end{enumerate}
Then, there exists a positive constant $c(A, \Omega)$ dependent on~$A$ and~$\Omega$ such that
\begin{equation}\label{Hardy.main}
  -\Delta_A-\frac{r^2}{\rho^4}
  \geq c(A,\Omega) \, \chi_{\Omega}
  \qquad \mbox{in} \qquad 
  L^2(\mathbb{H}^1) 
  \,.
\end{equation}
Moreover, if $A$ is of type (i), $c(A,\Omega)$ depends only on the associated magnetic field $B$.
\end{theorem}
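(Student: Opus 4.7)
The plan is to argue by contradiction, adapting Weidl's strategy of~\cite{Weidl_1999} to the sub-Riemannian setting. Assuming~\eqref{Hardy.main} fails, one produces a sequence of admissible test functions $(u_n)$ (smooth and compactly supported in $\bH^1$ in case (i), in $\bH^1\setminus\mathcal Z$ in case (ii)) with $\|u_n\|_{L^2(\Omega)}=1$ and
\[
    q_A[u_n] := \int_{\bH^1}\Bigl(|\nabla_A u_n|^2 - \tfrac{r^2}{\rho^4}|u_n|^2\Bigr)\,dxdydz \longrightarrow 0.
\]
The pointwise sub-Riemannian diamagnetic inequality $|\nabla_A u|\geq|\nabla|u||$, valid for the horizontal magnetic gradient, combined with Garofalo--Lanconelli~\eqref{eq:GL} yields
\[
    0\leq \int_{\bH^1}\Bigl(|\nabla|u_n||^2 - \tfrac{r^2}{\rho^4}|u_n|^2\Bigr)\,dxdydz \leq q_A[u_n] \to 0,
\]
so $|u_n|$ is a minimizing sequence for the critical Garofalo--Lanconelli inequality and, writing $u_n=e^{i\theta_n}|u_n|$ on $\{|u_n|>0\}$, the ``magnetic excess'' $\int|u_n|^2|\nabla\theta_n + A|^2$ also vanishes.

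Next I would use a Rellich-type compact embedding for horizontal Sobolev spaces on the bounded Lipschitz domain $\Omega\subset\bH^1$ to extract, up to a subsequence, $|u_n|\to w$ strongly in $L^2(\Omega)$ with $\|w\|_{L^2(\Omega)}=1$. In particular $w>0$ on a measurable set $E\subset\Omega$ of positive measure, and by pointwise a.e.\ convergence $|u_n|\geq \delta>0$ eventually on a sub-subset $E'\subset E$. Combining this lower bound with the vanishing magnetic excess then gives $\nabla\theta_n + A\to 0$ in $L^2(E';\bC^2)$.

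I now derive a contradiction from the structure of $A$. In case (i), I take the horizontal exterior derivative in the Rumin complex: $d(\nabla\theta_n + A)\to 0$ in $\mathscr D'(E')$. Since $\nabla\theta_n$ is locally horizontally exact, $d(\nabla\theta_n)\equiv 0$, so the limit yields $B=dA\equiv 0$ on $E'$, contradicting $B\not\equiv 0$ on $\Omega$ (after possibly shrinking $\Omega$ to a ball where $B$ is non-degenerate, which accounts for the dependence of the constant on $B$ alone). In case (ii), fix $(0,0,z_0)\in\Omega$ and consider loops $\gamma_R\subset\{z=z_0\}$ of radius $R$ encircling $\mathcal Z$. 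A Fubini-type argument shows that for a.e.\ small $R$, $\oint_{\gamma_R}(\nabla\theta_n + A_\alpha)\to 0$; however $\oint_{\gamma_R}A_\alpha = 2\pi\alpha$ independently of $R$, while $\oint_{\gamma_R}\nabla\theta_n$ tends to an integer multiple of $2\pi$ as $R\to 0$ (by single-valuedness of $e^{i\theta_n}$, after controlling a sub-Riemannian correction of order $R^2\int\partial_z\theta_n\,d\varphi$ arising from the non-horizontality of $\gamma_R$). This forces $\alpha\in\bZ$, a contradiction.

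The main obstacle is making the compactness/positivity step strong enough that $|u_n|$ is uniformly bounded below on the region where the exterior-derivative or holonomy argument can be run. A distinctive sub-Riemannian difficulty, specific to case (ii), is that natural circles encircling $\mathcal Z$ are not horizontal in $\bH^1$, so the holonomy integral acquires a correction involving $\partial_z\theta_n$; controlling this correction so that it vanishes in the $R\to 0$ limit is the key technical step distinguishing this argument from the classical planar one of~\cite{LW99}.
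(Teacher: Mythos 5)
Your overall shape --- diamagnetic inequality plus Garofalo--Lanconelli, then an equality analysis forcing $A$ to be gauge-trivial on $\Omega$ --- is the same as the paper's, but the way you try to extract a limiting object has a genuine gap at its core: the compactness step. From $\|u_n\|_{L^2(\Omega)}=1$ and $q_A[u_n]\to 0$ you do \emph{not} get a uniform bound on $\int_\Omega |\nabla |u_n||^2$ (nor on $\int_{\bH^1}\tfrac{r^2}{\rho^4}|u_n|^2$), precisely because \eqref{eq:GL} is critical: the difference $\int|\nabla|u_n||^2-\int\tfrac{r^2}{\rho^4}|u_n|^2$ controls neither term separately, and mass can concentrate at the origin where the weight behaves like $\rho^{-2}$, or $\|u_n\|_{L^2(\bH^1)}$ can blow up while $\|u_n\|_{L^2(\Omega)}$ stays normalized. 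So Rellich--Kondrachov cannot be invoked, and the limit $w$ could be $0$. The paper circumvents this by \emph{not} working with a minimizing sequence for the critical form: it restricts to $\Omega$ with Neumann conditions (Lemma~\ref{lem:lower-bound}), performs the ground-state substitution $v=\rho u$ so that the shifted form becomes the degenerate Dirichlet form $\int_\Omega|(\dH+iA)v|^2\rho^{-2}dq$, and proves a compact embedding for \emph{that} weighted space (Proposition~\ref{prop:discrete}, using the unweighted Hardy inequality \eqref{eq:hardy-unw} near the origin and, in the Aharonov--Bohm case, Theorem~\ref{thm:lw2-heis} inside Lemma~\ref{l:RK}). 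Discreteness of the Neumann spectrum then gives an \emph{actual} eigenfunction $\psi$ attaining $\lambda=\inf\sigma(\co^\Omega)$, so no limit passage is needed.

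Your second step also does not close as written. Even granting $|u_n|\ge\delta$ on a set $E'$ of positive measure, Egorov gives only a measurable $E'$, on which you cannot take the Rumin differential $D$ (a second-order operator) in the distributional sense; and a smooth $B$ vanishing on a positive-measure subset does not contradict $B\not\equiv 0$ on $\Omega$, so shrinking to a ball where $B$ is nonvanishing changes the set in $\chi_\Omega$ and requires a further (unsupplied) spreading argument. In case (ii) you yourself flag the unresolved correction $\tfrac{r^2}{2}\oint\partial_z\theta_n\,d\varphi$: the magnetic form controls only $\Phi\theta_n=\tfrac1r\partial_\varphi\theta_n+\tfrac r2\partial_z\theta_n$, never $\partial_z\theta_n$ alone, so there is no reason this term is $o(1)$ as $R\to0$, and the loops must in addition avoid the zero set of $u_n$. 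The paper avoids all of this: having an exact minimizer, it shows that $0$ is a \emph{simple} Neumann eigenvalue of $P_0^\Omega$ with eigenfunction $\rho^{-1}$ (Proposition~\ref{prop:simple}, via the factorization \eqref{eq:cfo}), deduces $\psi=e^{if}\rho^{-1}$ and the exact identity $A+\dH f\equiv 0$ on all of $\Omega$, and reads off the contradiction with $B\not\equiv0$ (case (i)) or with the non-integer flux (case (ii)) without any holonomy computation. To repair your argument you would essentially have to import the ground-state transform and the weighted compact embedding, at which point you recover the paper's proof.
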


This result is reminiscent of that of Weidl~\cite{Weidl_1999}
in the Euclidean case mentioned above.
We prove it in Section~\ref{ss:impro} by showing that the spectrum of the shifted operator $-\Delta_A-r^2/\rho^4$ with Neumann boundary conditions on the bounded set~$\Omega$ is purely discrete and bounded away from~$0$ whenever the vector potential~$A$ satisfies the assumptions of Theorem~\ref{thm:impro}. We stress that the  proof that we present in the Aharonov--Bohm case relies on the validity of the improved Hardy inequality from the center presented in  Theorem~\ref{thm:lw2-heis}.
We leave as an open problem whether the compactly supported 
Hardy weight on the right-hand side of~\eqref{Hardy.main}
can be replaced by a positive one in the spirit of~\eqref{Hardy.Cazacu}.

Finally, we present Hardy-type inequalities, which do not necessarily
improve~\eqref{eq:GL}, but provide \emph{positive} Hardy weights  
under magnetic flux conditions.
We restrict to magnetic fields of the form
\begin{equation}\label{field.class}
  B(x,y,z)=b_1(x,y,z) \, dx\wedge d\omega + b_2(x,y,z) \, dy\wedge\omega
\end{equation}
and assume that its support is contained in a cylinder 
$\{x^2 + y^2 \le r_0\}$ for some positive~$r_0$
in the sense that it is the case of the functions 
$b_1,b_2:\R^3\to\R$.  
Notice, in particular, that $B$ could be unbounded 
with respect to the variable~$z$. 
It is useful to remark that, due to its closedness, 
$B$~is uniquely determined by its \emph{primitive}, 
that we define as
\begin{equation}\label{primitive}
    \b(r\cos\varphi, r\sin\varphi,z) 
    := - \int_r^{+\infty} b_1(t\cos\varphi, t\sin\varphi, z)\,dt 
    \,, 
\end{equation}
where $r\ge 0$, $\varphi\in \mathbb S^1$ and $z\in\mathbb R$.
\begin{theorem}\label{thm:lw1-heis}
Let~$B$ be a magnetic field on $\mathbb H^1$ 
of the form~\eqref{field.class}
and assume that its support is contained in the cylinder 
$\{x^2+y^2\le r_0^2\}$ for some $r_0>0$. 
Then, the quantity
        \begin{equation}
            \flux{B} := \frac{1}{2\pi}\int_{\mathbb R^2} \b(x,y,z)\,dxdy,
        \end{equation}
        is independent of $z\in \mathbb R$. Moreover, if $\flux{B}\notin \mathbb Z$, there exists a positive constant $c(B)$ 
        dependent on~$B$
        such that
\begin{equation}\label{eq:lw-boh}
  -\Delta_A \ge \frac{c(B)}{1+r^2} 
  \qquad \mbox{in} \qquad 
  L^2(\mathbb{H}^1) 
  \,.
\end{equation}
\end{theorem}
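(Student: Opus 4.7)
For the $z$-independence of $\flux B$, I would exploit the closedness $dB=0$: expanding $B=b_1\,dx\wedge\omega+b_2\,dy\wedge\omega$ via $d\omega=-dx\wedge dy$ and requiring the $dx\wedge dy\wedge dz$ coefficient of $dB$ to vanish gives the pointwise identity $\partial_z(xb_1+yb_2)=2(\partial_x b_2-\partial_y b_1)$, equivalently $Xb_2=Yb_1$. Differentiating $\flux B$ under the integral, converting to polar coordinates, swapping orders of integration, and integrating by parts in the plane (exploiting the compact $r$-support of $b_1,b_2$) together with this closedness identity yields $\partial_z\flux B=0$. Geometrically this is Stokes' theorem: for any potential $A$ with $dA=B$, the holonomy of $A$ around a horizontal loop $\{r=R,\,z=\text{const}\}$ with $R>r_0$ is invariant under $z$-translation because two such loops cobound a cylinder in $\{r>r_0\}$ on which $B\equiv 0$.

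For the Hardy inequality the strategy is to reduce to the Aharonov--Bohm case covered by Theorem~\ref{thm:lw2-heis}. On $\{r>r_0\}$, $A$ is closed; since $H^1_{\mathrm{dR}}(\{r>r_0\})$ is one-dimensional (generated by $d\varphi$), $A$ is gauge-equivalent there mod~$\omega$ to $\alpha\,d\varphi$ with $\alpha=\flux B$. A smooth cutoff gauge transformation then yields a gauge-equivalent vector potential, still denoted $A$, which is smooth on $\bH^1$ and coincides with $\alpha\,d\varphi$ on $\{r\ge r_1\}$ for some $r_1>r_0$. Using a smooth radial partition of unity $\chi_{\mathrm{in}}^2+\chi_{\mathrm{out}}^2=1$, with $\chi_{\mathrm{out}}$ supported in $\{r\ge r_1\}$ and $\chi_{\mathrm{in}}$ in $\{r\le r_2\}$ for some $r_2>r_1$, the IMS localization identity gives
\[
\|\nabla_A u\|^2 = \|\nabla_A(\chi_{\mathrm{in}} u)\|^2 + \|\nabla_A(\chi_{\mathrm{out}} u)\|^2 - \sum_j\||\nabla\chi_j|\,u\|^2.
\]
For the outer piece, Theorem~\ref{thm:lw2-heis} gives $\|\nabla_A(\chi_{\mathrm{out}} u)\|^2\ge d(\alpha,\mathbb Z)^2\int|\chi_{\mathrm{out}}u|^2/r^2$, which on $\{r\ge r_1\}$ dominates a positive constant multiple of $\int|\chi_{\mathrm{out}} u|^2/(1+r^2)$ because $d(\alpha,\mathbb Z)^2>0$ under the hypothesis $\flux B\notin\mathbb Z$; the IMS remainder is supported on a bounded annulus where $1/(1+r^2)$ is bounded below, so it can be absorbed.

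The main obstacle is the inner piece: one needs $\|\nabla_A(\chi_{\mathrm{in}}u)\|^2\gtrsim\int|\chi_{\mathrm{in}}u|^2/(1+r^2)$, which reduces to a magnetic Poincar\'e-type bound $\|\nabla_A v\|^2\gtrsim \|v\|^2$ for $v$ compactly supported in $\{r\le r_2\}$. A direct first-eigenvalue argument fails because this cylinder is non-compact along $z$. To handle this I would use the pointwise comparison
\[
|\nabla_A u|^2 \ge (1-\varepsilon)|\nabla_{A_\alpha} u|^2 - (\tfrac{1}{\varepsilon}-1)|A-A_\alpha|^2|u|^2
\]
on $\{r\le r_2\}\setminus\cZ$: Theorem~\ref{thm:lw2-heis} controls the first term, while $A-A_\alpha$ is bounded and compactly $r$-supported so the error is majorised by $C(B)|u|^2/(1+r^2)$ on its support and absorbed for $\varepsilon$ small. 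Functions $u$ whose support meets $\cZ$ are treated via a further smooth axial cutoff vanishing in a thin tube around the axis: outside the tube the previous argument applies, while inside (where $r$ is small) the smoothness of the original $A$ across $\cZ$ together with a direct Hardy-type estimate handle the contribution, shrinking the tube if needed so the residual term is absorbed. Passage from $C_c^\infty(\bH^1)$ to the form domain is by standard density.
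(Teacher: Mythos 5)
The first half of your argument — the $z$-independence of $\flux{B}$ via closedness of $B$ and a Stokes/holonomy argument on loops $\{r=R,\,z=\mathrm{const}\}$ with $R>r_0$, and the gauge reduction of $A$ to $\alpha\,d\varphi$ with $\alpha=\flux{B}$ outside a slightly larger cylinder — is correct and matches what the paper proves in Proposition~\ref{prop:ab-cylinder}. The second half has a genuine gap, located exactly where you place the ``main obstacle''. First, the claim that $A-A_\alpha$ is bounded on $\{r\le r_2\}$ is false: $A$ is smooth across $\cZ$ while $|A_\alpha|=|\alpha|/r$, so $|A-A_\alpha|^2\sim\alpha^2/r^2$ as $r\to0$. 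Choosing the gauge with $|\alpha|=d(\alpha,\bZ)$, your error term $\frac{1-\varepsilon}{\varepsilon}|A-A_\alpha|^2|u|^2$ behaves near the axis like $\frac{1-\varepsilon}{\varepsilon}\,d(\alpha,\bZ)^2\,|u|^2/r^2$, which strictly dominates the gain $(1-\varepsilon)\,d(\alpha,\bZ)^2\,|u|^2/r^2$ coming from Theorem~\ref{thm:lw2-heis} for \emph{every} $\varepsilon\in(0,1)$; no choice of $\varepsilon$ closes this. Second, the fallback inside a thin tube around $\cZ$ cannot be completed as described: there is no ``direct Hardy-type estimate'' from the axis there, since the best constant in $-\Delta\ge c/r^2$ on $L^2(\bH^1\setminus\cZ)$ is $c=0$ (see \cite{Ruzhansky2017}) and $B$ may vanish identically near $\cZ$; a Poincar\'e inequality on the (non-compact in $z$) tube requires a lateral cutoff whose error lands on an annulus \emph{inside} $\{r\le r_0\}$, where none of your estimates give control, so the argument is circular. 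Third, the absorption of the IMS remainder is also unjustified: it requires an independent bound $Q_A(u)\ge c\int_{\{r_1\le r\le r_2\}}|u|^2$, which you never establish, and it does not follow from the pieces you have, since the gain density $d(\alpha,\bZ)^2/r^2\le \tfrac14 r^{-2}$ available on the annulus is always smaller than the derivative cost $\sim(r_2-r_1)^{-2}$ of any partition of unity there, for every choice of $r_1<r_2$.

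The paper's proof avoids all three issues by never cutting off in the magnetic part. Since the integrand of $Q_A(u)$ is pointwise non-negative, one may simply discard the region $\{r\le r_0\}$ and prove (Lemma~\ref{lem:laptev-heis}) that $Q_A(u)\ge c\int_{\{r>r_0\}}|u|^2/r^2$ for \emph{all} $u\in C^\infty_c(\bH^1)$, with no boundary condition at $r=r_0$; this is done by Fourier decomposition in $z$ and $\varphi$ and a boundary-condition-free variant of the Laptev--Weidl argument, treating the annuli where $(m+r^2k/2+\flux{B})^2$ is small via weighted Poincar\'e inequalities. The inner region is then controlled purely by the \emph{radial} derivative through the one-dimensional logarithmic Hardy inequality \eqref{eq:Hardy.radial} anchored at some $r_1>r_0$, whose annulus error term sits in $\{r>r_0\}$ and is again handled by Lemma~\ref{lem:laptev-heis}; this yields the stronger weight of Theorem~\ref{thm:hardy.logarithm}, which implies \eqref{eq:lw-boh}. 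If you wish to keep your outline, the missing ingredient you must supply is precisely such an exterior estimate valid for test functions that do not vanish at $r=r_0$.
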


Let us observe that if $B$ is compactly supported then $\flux{B}=0$. In this case, it can actually be shown that the associated magnetic potential can be chosen to be compactly supported, which implies that no improvement of $-\Delta_A\ge 0$ as above is possible, 
see Remark~\ref{rmk:compact-supp} below.

\begin{remark}
In Theorem~\ref{thm:hardy.logarithm} below we prove a slightly stronger result than Theorem~\ref{thm:lw1-heis}. Indeed, we are able to replace 
the right-hand side of~\eqref{eq:lw-boh} with a function that behaves as $(r\log r)^{-2}$ as $r\downarrow 0$. We stress that the same technique can be applied also in the Euclidean case, 
yielding an improvement of~\eqref{Hardy.Cazacu}, 
that was not known to the best of our knowledge,
see Remark~\ref{rmk:euclidean.improvement}.
\end{remark}

The Euclidean magnetic Hardy-type inequalities~\eqref{Hardy.Cazacu} 
were fundamental ingredients for the study of 
the large-time behavior of the magnetic heat semigroup 
in \cite{Krejcirik2013, Cazacu2016}.
In particular, it was shown that although compactly supported magnetic fields on the Euclidean plane do not shift the spectrum, 
if they have non-integer flux, 
they improve the decay of the $L^2$-norm of the solutions of 
the heat equation with initial data living 
in some appropriate weighted space. 
An interesting 
research direction for a future work
is the application of Theorem~\ref{thm:impro}
 to show an analogous improved decay rate of the magnetic heat semigroup
in the Heisenberg setting.

\section{The Heisenberg group}
%

\subsection{The basic structure}
The Heisenberg group $\bH^1$ is $\bR^3$ 
endowed with the non-commutative group law 
\begin{equation*}
	(x,y,z)\ast (x',y',z') 
	= \left(x+x',y+y',z+z'+\frac{xy'-x'y}{2}{}\right)
	, 
\end{equation*}
where ${(x,y,z),\ (x',y',z')\in\R^3}$.
A basis for the Lie algebra of left-invariant vector fields 
is given by $X,Y$ defined in~\eqref{fields} 
together with $Z := \partial_z$.
The associated sub-Riemannian structure is given by the distribution $\mathcal D := \operatorname{span}\{X,Y\}$ endowed with the scalar product making $X$ and $Y$ orthonormal, and denoted by a dot, \emph{i.e.,}  $(a_1X+a_2Y)\cdot (b_1X+b_2Y)=a_1b_1+a_2b_2$ for $a_i,b_i\in\bC$. This structure is step $2$ since $[X,Y]=Z$, so that {$\operatorname{span}\{X,Y,[X,Y]\}|_q=T_q\bH^1$} for all $q\in\bH^1$. Moreover it is nilpotent since $[X,Z]=[Y,Z]=0$. 

What is relevant for the following is that 
the Heisenberg group structure is \emph{contact}. 
{That is, the one form $\omega\in \Omega^1(\bH)$ given by
 \begin{equation*}
  \omega(x,y,z) := dz - \frac{1}{2} (x\,dy-y\,dx)
 \end{equation*}
satisfies $\omega\wedge d\omega\neq 0$ and $\ker \omega = \operatorname{span}\{X,Y\}$.} The Reeb vector field is $Z$ (i.e., $Z\in \ker d\omega$ and $\omega(Z) = 1$), and the dual basis of the cotangent bundle {$T^*\bH^1$} associated with $\{X,Y,Z\}$ is $\{dx,dy,\omega\}$.

\subsection{The Laplacian}
The sub-Riemannian Hamiltonian $h\in C^\infty(T^*\bH^1)$ given by the Heisenberg structure is
 \begin{equation}\label{eq:ham-heis}
  h(p,q) = \frac12 \left(\langle p, X(q)\rangle^2 + \langle p, Y(q)\rangle^2 \right) = \frac12 (p_x^2+p_y^2),
 \end{equation}
 where $(p,q)\in T^*\bH^1$, 
 $\langle\cdot,\cdot\rangle$ denotes the duality between covectors 
 and vectors, 
 and $p=p_xdx+p_ydy+p_\omega\omega$. See, e.g., \cite{agrachev_barilari_boscain_2019}.
 Let us denote by 
 \begin{equation}\label{seminorm}
   |\cdot| := \sqrt{2h(\cdot)}=\sqrt{p_x^2+p_y^2}
 \end{equation} 
 the induced seminorm on $T^*\bH^1$.
Then, the sub-Riemannian Dirichlet energy $Q$ is the closed form on $L^2(\bH^1)$ with core $C^\infty_c(\bH^1)$, defined by 
\begin{equation*}
	Q(u) := \int_{\bH^1} |d u(q)|^2\,dq, \qquad \forall u\in C^\infty_c(\bH^1).
\end{equation*}
Here $d$ denotes the exterior differential, and $dq$ is the usual Lebesgue measure on $\mathbb R^3$. The associated diffusion operator is the Heisenberg sub-Laplacian $-\Delta := X^*X + Y^*Y$, which in coordinates $(x,y,z)\in \bH^1$ reads
\begin{equation}\label{eq:lapl}
  \Delta  = \partial_x^2 + \partial_y^2 + \frac{x^2 + y^2}4 \partial_z^2 + \partial_z \frac{x\partial_y-y\partial_x}2.
\end{equation}
This is a self-adjoint operator in $L^2(\bH^1)$.

The form domain of $Q$ is the horizontal Sobolev space $W^1(\bH^1)$, while $W^2(\bH^1)$ is the domain of the sub-Laplacian. 
These spaces coincide with the set of $L^2(\bH^1)$ functions such that the following respective norms, computed in the sense of distributions, are finite (see, {e.g.}, \cite{Folland73,HK2000}):
\begin{equation*}
	\|u\|_{W^1(\bH^1)} 
	:= \sqrt{\|u\|_{L^2(\bH^1)}^2+ Q(u)} \,, 
	\qquad
	\|u\|_{W^2(\bH^1)} 
	:= \sqrt{\|u\|_{W^1(\bH^1)}^2 + \|\Delta u\|_{L^2(\bH^1)}^2} \,.
\end{equation*}

\subsection{Cylindrical coordinates}\label{sec:cylindrical}

Let $\cZ := \{x=y=0\}\subset \bH^1$ be the center of the group as above, 
and consider cylindrical coordinates $(r,\varphi,z)\in\R_+\times\bS^1\times\R$
with $\R_+ := (0,\infty)$, 
so that for any $(x,y,z)\in\bH^1\setminus\mathcal Z$, we write $(x,y,z)=(r\cos\varphi,r\sin\varphi,z)$. In these coordinates, up to an orthogonal transformation in the $(x,y)$ coordinates, the basis $\{dx,dy,\omega\}$ of $T^*\bH^1$ is transformed in $\{dr, rd\varphi,\omega\}$, where
\begin{equation}
	\omega = dz -\frac{r^2}2 \, d\varphi.
\end{equation}
The corresponding dual basis for $T\bH^1$ is then
\begin{equation}\label{eq:RPhi}
	R := \partial_r, \qquad \Phi := \frac1r \partial_\varphi+\frac r2\partial_z, 
	\qquad \text{and}\qquad
	Z := \partial_z.
\end{equation}
That is, $\{R,\Phi\}$ is a global orthonormal frame for $\mathcal D$.
In these coordinates, the Laplacian (with an abuse of notation
denoted by the same symbol~$\Delta$) acts on 
the Hilbert space
$L^2(\bR_+\times\bS^1\times \bR,\, r\,drd\varphi dz)$ as
\begin{equation}\label{eq:sublap-cyl}
	\Delta = -R^*R-\Phi^*\Phi = \partial_r^2 +\frac1r \partial_r+\frac1{r^2}\left(\partial_\varphi + \frac{r^2}2 \partial_z\right)^2.
\end{equation}

\section{Magnetic fields in the Heisenberg group}

To motivate the definition of magnetic fields in the case of the Heisenberg group, we start by recalling some facts about magnetic fields in Riemannian geometry.

\subsection{Riemannian magnetic fields}

A magnetic field $B$ on a Riemannian manifold $(M,g)$ is a closed smooth $2$-form.
Locally, it is always possible to find a $1$-form $A\in \Omega^1(M)$ such that $dA=B$, known as vector potential for $B$, thanks to the identification of $1$-forms and vector fields induced by the Riemannian metric. Recalling that the Riemannian Hamiltonian $h\in C^\infty(T^*M)$ is obtained by duality with the metric $g$, the action of a magnetic field can then be interpreted as a change in the Hamiltonian function $h\mapsto h_A$, which reads
\begin{equation}\label{eq:hA}
	h_A(p,q) = h(p+A(q),q).
\end{equation}
 
The corresponding magnetic Laplacian $\Delta_A$ is obtained by an appropriate quantization of $h_A$, which yields the associated form
(recall~\eqref{seminorm})
\begin{equation}
	Q_A(u) := \int_M |(d + iA) u|^2\,dq,
	\qquad \forall u \in C^\infty_c(M).
\end{equation}
Note that, by definition, $|du| = |\nabla u|:=\sqrt{g(\nabla u, \nabla u)}$, where $\nabla$ is the Riemannian gradient.
For this reason, the above expression is sometimes written by replacing~$d$ with~$\nabla$,  implicitly identifying the $1$-form $A$ with the associated vector field. 

\subsubsection{Gauge invariance}
It is clear that if a globally defined vector potential $A$ for $B$ exists, it is not unique, as $A+\varpi$ is also a vector potential for $B$ as soon as 
$\varpi\in \Omega^1(M)$ is closed.
It can be shown that classically this does not pose any problem, as the resulting magnetic trajectories (i.e., projections on $M$ of trajectories on $T^*M$ of integral curves of the Hamiltonian vector field induced by $h_A$) depend only on $B$. 
 
This \emph{gauge invariance} is only partially true from the quantum perspective. Indeed, if $\varpi$ is exact, i.e., if $\varpi = df$ for some $f\in \Omega^0(M)=C^\infty(M)$, then $\Delta_A$ is unitarily equivalent to $\Delta_{A+df}$ via the \emph{gauge transformation} $u\mapsto \exp(i\gamma f)u$ for any $\gamma\neq 0$.
In this case, we say that $A$ and $A+\varpi$ are \emph{gauge equivalent}.
However, if the de Rham cohomology group $H^1_{\text{dR}}(M)$ is non-trivial, 
there could exist non-exact but closed forms $\varpi$, such that the quantum dynamics induced by $A$ and $A+\varpi$ are different, see e.g.\ \cite[Section~7.2]{Helffer88}. This was first observed in the case of the $2$-dimensional Euclidean magnetic Laplacian on the punctured plane for potentials of the form $\alpha d\varphi$
in the polar coordinates $(r,\varphi)$
with $\alpha\notin \mathbb Z$ 
and is known as the Aharonov--Bohm effect \cite{aharonov1959significance}.

\subsection{Horizontal magnetic fields}\label{sec:horizontal}

Since the notions of differential $k$-form only depends on the differential structure, the same definitions as above can be adapted to the Heisenberg group $\bH^1$. In this case, however, due to the degeneracy of the sub-Riemannian Hamiltonian defined in \eqref{eq:ham-heis}, we have additional simplifications:
\begin{enumerate}
	\item Writing the vector potential as $A=A_xdx+A_ydy+A_\omega \omega\in \Omega^1(\bH^1)$, the expression for $h_A$ given in \eqref{eq:hA} is independent of $A_\omega$. This is due to the fact that the sub-Riemannian Hamiltonian \eqref{eq:ham-heis} is degenerate along $\omega$. As a consequence, the natural class where to look for vector potentials defined on an open set $U\subset \bH^1$ is $\OmegaH^1(U):=\Omega^1(U)/\operatorname{span}\{\omega\}$.
	\item Let us write the magnetic field as $B = b_x dx\wedge \omega + b_y dy\wedge \omega + b_\omega dx\wedge dy$. Then, for the vector potential as above, we have
	\begin{equation}\label{eq:b-omega}
        b_\omega = X A_y - Y A_x - A_\omega.
	\end{equation}
    Since $A$ is determined up to $A_\omega$, we can always choose
    \begin{equation}\label{eq:Aomega}
    A_\omega = XA_y-YA_x,
    \end{equation}
    thus obtaining $b_\omega = 0$. 
    That is, without loss of generality we can consider magnetic fields in $\operatorname{span}\{dx\wedge\omega,dy\wedge\omega\}\subset \Omega^2(U)$.
\end{enumerate}

\subsubsection{A primer on the Rumin complex}
The above observations naturally lead to replace the de Rham complex of differential geometry, with the Rumin complex \cite{Rumin1994} from contact geometry.  
In the case of an open set $U\subset \bH^1$ this is the short exact sequence of spaces of smooth forms:
\begin{equation}
  0 \rightarrow \OmegaH^0(U) \xrightarrow{\dH} \OmegaH^1(U)\xrightarrow{D} \OmegaH^2(U) \xrightarrow{d} \OmegaH^3(U)\rightarrow 0.
\end{equation}
Here, 
\begin{itemize}
  \item $\OmegaH^0(U)$ coincide with the usual (compactly supported) $0$-forms $\Omega^0(U)= C^\infty_c(U)$.
  \item $\OmegaH^1(U) = \Omega^1(U)/\operatorname{span}\{\omega\}$ are the horizontal $1$-forms. These are in bijection with horizontal vector fields, where $A = A_X dx+A_Ydy\mod \omega$ is associated with $A = A_X X+A_Y Y$.
  \item $\OmegaH^2(U) = \operatorname{span}\{dx\wedge\omega,dy\wedge\omega\}\subset \Omega^2(U)$ are horizontal $2$-forms. Also these are in bijection with horizontal vector fields, where $B=b_1 dx\wedge\omega+b_2 dy\wedge\omega$ is associated with $B = b_2 X+b_1 Y$.
  \item $\OmegaH^3(U)$ coincide with the usual (compactly supported) $3$-forms, i.e.\ the volume forms $\OmegaH^3(U)= \operatorname{span}\{\omega\wedge d\omega\}$.
\end{itemize}
The coboundary maps appearing above are:
\begin{itemize}
  \item $\dH:\OmegaH^0(U) \rightarrow \OmegaH^1(U)$ is the horizontal differential. It acts on functions $f\in C^\infty_c(U)$ by $\dH f = (Xf)dx+(Yf)dy\mod \omega$.
  \item $D:\OmegaH^1(U)\rightarrow \OmegaH^2(U)$ is the main novelty of the Rumin complex. In the present case it is simply defined as $DA = d\tilde A$, where $\tilde A \in \Omega^1(U)$ is the unique $1$ form such that $\tilde A=A\mod\omega$ and $d\tilde A\in \OmegaH^2(U)$. In particular it turns out that $D$ is the following second order operator $D(A_Xdx+A_Ydy)=b_1dx\wedge\omega+b_2dy\wedge\omega$
  \begin{equation}\label{eq:DA-xy}
      b_1=X(XA_Y-YA_X)-ZA_X,\quad b_2=Y (XA_Y-YA_X)-ZA_Y.
  \end{equation}
  This follows by observing that $\tilde A=A+A_\omega\,\omega$, with $A_\omega$ defined in \eqref{eq:Aomega}.
  See below for the explicit expression of $DA$ in cylindrical coordinates.
  \item $d:\OmegaH^2(U) \rightarrow \OmegaH^3(U)$ is the restriction to $\OmegaH^2(U)\subset \Omega^2(U)$ of the standard exterior differential.
\end{itemize}

Since the maps above satisfy the usual requirement $D\circ \dH = d\circ D = 0$,  to the Rumin complex we can associate the cohomology groups $H^k_{\mathrm H}(U)$. These are the sets of closed forms in $\OmegaH^k(U)$ (i.e., the kernel of the coboundary operator on $\OmegaH^k(U)$) modulo exact forms (i.e., the image of the coboundary operator on $\OmegaH^{k-1}(U)$). With these definitions, it turns out that $H^k_{\mathrm H}(U)$ coincide with the usual de Rham cohomology groups on $\bR^3$.

We conclude this section by computing the precise formula for the operator $D$, in cylindrical coordinates $(r,\varphi,\omega)$, introduced in Section~\ref{sec:cylindrical}. Observe that in these coordinates we have  $\OmegaH^2(\bH^1\setminus\cZ)=\operatorname{span}\{dr\wedge\omega,rd\varphi\wedge\omega\}$ and that $df = (Rf)dr + (\Phi f)rd\varphi+(Zf)\omega$ for any $f\in C^\infty(U)$.

\begin{proposition}\label{prop:DA}
	Let $U\subset \bH^1$ and consider $A = \alpha_1 dr + \alpha_2 rd\varphi\mod \omega\in \OmegaH^1(U)$. Then, 
	\begin{equation}\label{eq:DA}
		DA = \left(R\gamma-Z\alpha_1\right)dr\wedge\omega + 
			\left(\Phi\gamma-Z\alpha_2\right)rd\varphi\wedge\omega,
	\end{equation}
	where we let 
	\begin{equation}\label{eq:gamma}
		\gamma = \frac1rR(r\alpha_2)-\Phi\alpha_1.
	\end{equation}
\end{proposition}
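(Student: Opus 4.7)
The plan is a direct computation: I would realise $DA = d\tilde A$ for the unique extension $\tilde A = A + A_\omega\,\omega$ of $A$ such that $d\tilde A \in \OmegaH^2(U)$, and then read off the components in the adapted basis $\{dr\wedge\omega,\, r\,d\varphi\wedge\omega\}$.

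First, using $\omega = dz - \tfrac{r^2}{2} d\varphi$, so that $d\omega = -r\, dr\wedge d\varphi$, I expand by Leibniz
\begin{equation*}
d\tilde A = d\alpha_1 \wedge dr + d(r\alpha_2)\wedge d\varphi + dA_\omega \wedge \omega - r A_\omega\, dr\wedge d\varphi.
\end{equation*}
I would then collect every term in the basis $\{dr\wedge d\varphi,\, dr\wedge dz,\, d\varphi\wedge dz\}$ of $\Lambda^2 T^*\bH^1$ and convert to the adapted basis via
\begin{equation*}
dr\wedge dz = dr\wedge\omega + \tfrac{r^2}{2} dr\wedge d\varphi, \qquad r\, d\varphi\wedge dz = r\,d\varphi\wedge\omega.
\end{equation*}
A bit of bookkeeping, exploiting $\Phi = \tfrac{1}{r}\partial_\varphi + \tfrac{r}{2}\partial_z$ from \eqref{eq:RPhi} to recombine the coefficients, rearranges the result into
\begin{equation*}
d\tilde A = \bigl(\gamma - A_\omega\bigr)\, r\, dr\wedge d\varphi + (RA_\omega - Z\alpha_1)\, dr\wedge\omega + (\Phi A_\omega - Z\alpha_2)\, r\,d\varphi\wedge\omega,
\end{equation*}
with $\gamma = \tfrac{1}{r}R(r\alpha_2) - \Phi\alpha_1$ as in \eqref{eq:gamma}. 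The membership $d\tilde A\in\OmegaH^2(U)$ is now equivalent to the vanishing of the $dr\wedge d\varphi$ coefficient, which forces $A_\omega = \gamma$; substitution gives \eqref{eq:DA}.

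The only mildly non-routine point is the appearance of the correction $\alpha_2/r$ hidden inside $\tfrac{1}{r}R(r\alpha_2) = R\alpha_2 + \alpha_2/r$: it is the cylindrical counterpart of the Cartesian prescription $A_\omega = XA_y - YA_x$ from \eqref{eq:Aomega}, and arises precisely because $r\,d\varphi$ (not $d\varphi$) is the coframe member dual to $\Phi$. The differentiation $d(r\alpha_2)\wedge d\varphi$, together with the change of area $dx\wedge dy = r\,dr\wedge d\varphi$, produces this extra factor automatically, so no separate verification against \eqref{eq:DA-xy} is needed.
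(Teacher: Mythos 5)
Your proposal is correct and follows essentially the same route as the paper: both determine the unique extension $\tilde A = A + A_\omega\,\omega$ by forcing $d\tilde A$ to be horizontal (which yields $A_\omega = \gamma$) and then read off $DA = d\tilde A$ by direct computation using $d\omega = -r\,dr\wedge d\varphi$. The paper leaves the bookkeeping implicit, whereas you carry it out explicitly (and correctly), including the observation that the vanishing of the $dr\wedge d\varphi$ coefficient is what pins down $\gamma$.
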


\begin{proof}
	Let $\tilde A = A + \gamma\omega$ where $\gamma\in C^\infty_c(U)$ is such that $d\tilde A \wedge \omega = 0$. Then, $d\tilde A \in \OmegaH^2(U)$ and thus $DA=d\tilde A$. Observing that $d\omega = -dr\wedge rd\varphi$, \eqref{eq:gamma} and \eqref{eq:DA} follow by direct computations.
\end{proof}


\subsubsection{Horizontal magnetic fields and Poincaré gauge}\label{sss:horizontal_magnetic}

We are finally in a position to define the meaning of magnetic field in the Heisenberg group.

\begin{definition}
	A \emph{horizontal magnetic field} on an open set $U\subset \bH^1$ is a closed $2$-form $B\in \OmegaH^2(U)$. A vector potential for $B$ is $A\in \OmegaH^1(U)$ such that $DA=B$.
\end{definition}

Thanks to the closure requirement, horizontal magnetic fields are determined by a single smooth function. 
Indeed, we have the following.

\begin{proposition}\label{prop:primitive}
    Let $B= b_1 dr \wedge \omega + b_2 rd\varphi\wedge\omega$ be a horizontal magnetic field. Then, $B$ is completely determined by $b_1$.
\end{proposition}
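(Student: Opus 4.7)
The plan is to translate the closedness condition $dB = 0$ into a first-order linear ODE in the radial variable, which will then be integrated to express $b_2$ explicitly as an integral of~$b_1$.

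The first step is to compute $dB$ in cylindrical coordinates. From $\omega = dz - (r^2/2)\, d\varphi$ one obtains $d\omega = -dr\wedge r d\varphi$, and in particular $dr\wedge d\omega = 0 = d\varphi\wedge d\omega$. Rewriting $b_2\, r d\varphi\wedge\omega = (rb_2)\, d\varphi\wedge\omega$, this leaves
\begin{equation*}
dB = db_1\wedge dr\wedge\omega + d(rb_2)\wedge d\varphi\wedge\omega.
\end{equation*}
Since $\{dr, r d\varphi, \omega\}$ is the coframe dual to $\{R, \Phi, Z\}$, every smooth function $f$ satisfies $df = (Rf)\, dr + (\Phi f)\, r d\varphi + (Zf)\,\omega$. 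Substituting and keeping only the terms that survive the wedge product (i.e.\ avoiding repeated factors of $dr$, $r d\varphi$, or $\omega$), a short computation yields
\begin{equation*}
dB = \left(\frac{1}{r}\, R(rb_2) - \Phi b_1\right) dr\wedge r d\varphi\wedge\omega,
\end{equation*}
so that closure $dB=0$ is equivalent to the single scalar equation $R(rb_2) = r\,\Phi b_1$.

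The second step is to integrate this ODE along $r$ with $(\varphi, z)$ playing the role of parameters. Since $B$ is a horizontal $2$-form on $U\subset \bH^1$, the combination $b_2\, r d\varphi\wedge\omega$ must extend continuously across the axis $\cZ = \{r=0\}$; this forces $rb_2(r,\varphi,z)\to 0$ as $r\to 0^+$, pinning the integration constant and yielding the explicit formula
\begin{equation*}
b_2(r,\varphi,z) = \frac{1}{r}\int_0^r s\,\Phi b_1(s,\varphi,z)\, ds.
\end{equation*}
Hence $b_2$, and therefore the full horizontal $2$-form $B$, is uniquely determined by $b_1$.

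The wedge calculation itself is essentially mechanical once the structural identity $d\omega = -dr\wedge r d\varphi$ is available; the only non-algebraic ingredient is the boundary condition at $r=0$, which is dictated not by the local closure equation but by the global requirement that $B$ be a genuine smooth horizontal $2$-form across~$\cZ$. This is the point that deserves the most care in the write-up, especially in view of the applications to magnetic fields of the form~\eqref{field.class} appearing in Theorem~\ref{thm:lw1-heis}.
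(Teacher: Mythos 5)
Your proposal is correct and follows essentially the same route as the paper: closedness of $B$ yields the scalar equation $\frac{1}{r}R(rb_2)=\Phi b_1$, which is then integrated in $r$ to recover $b_2$ from $b_1$. Your added remark that the integration constant is fixed by requiring $rb_2\to 0$ as $r\to 0^+$ (smoothness of the $2$-form across $\cZ$) makes explicit a point the paper leaves implicit, but it does not change the argument.
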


\begin{proof}
    We need to show that $b_2$ can be determined from $b_1$.
    The closure of $B$ implies
	\begin{equation*}
		0 = dB = \left( \frac1r R(r b_2)-\Phi b_1 \right) dr\wedge rd\varphi\wedge \omega.
	\end{equation*}
	Then, it suffices to integrate the function appearing above. Indeed, we obtain
	\begin{equation}\label{eq:b2}
		b_2(r,\varphi,z) = \frac{1}{r}\int_0^r \Phi b_1(t,\varphi,z)\,tdt . 
	\end{equation}
This concludes the proof of the proposition.	
\end{proof}

\paragraph{Magnetic sub-Laplacians}

We are particularly interested in the action of magnetic fields on the natural diffusion operator in $\bH^1$, the sub-Laplacian~$-\Delta$,
see~\eqref{eq:lapl}. 
In analogy with the Riemannian case, we pose the following.

\begin{definition} Let $U\subset \bH^1$ be an open set. 
	The magnetic sub-Laplacian associated with a vector potential $A\in \OmegaH^1(U)$ is 
	the ({non-negative}) operator {$-\Delta_A$} in $L^2(U)$ associated with the closure of the form $Q_A$, where
	\begin{equation}\label{eq:QA}
		Q_A(u) = \int_{\bH^1} |(\dH+iA)u|^2 \,dq, 
		\qquad \forall u\in C^\infty_c(U).
	\end{equation}
\end{definition}

The above is well defined since \eqref{eq:QA} is non-negative.
Observe that, as in the Riemannian case, $Q_A$ could be written by replacing $\dH u$ with the horizontal gradient $\nH u = (Xu)X+(Yu)Y$, identifying the horizontal $1$-form $A$ with the associated horizontal vector field, and computing the norm with respect to the dual norm on the tangent bundle such that $|\dH u| = |\nH u|$.
With this identification, for a given vector potential $A=A_xX+A_yY$ we have $-\Delta_A=X_A^*X_A+Y_A^*Y_A$.
Here, we let $X_A:=X+iA_x$, $Y_A:=Y+iA_y$. In particular, it holds
\begin{equation}\label{eq:m-subLapl}
    -\Delta_Au=-\Delta u+|A|^2u-i\left((XA_x+YA_y)u+2A\cdot \nH u\right), \qquad u \in C^\infty(\bH^1).
\end{equation}

When $U=\mathbb H^1$, following \cite[Thm~7.22]{LiebLoss}, one can show that the form domain of~$Q_A$, denoted by $W^1_A(\bH^1)$, coincides with the set of functions in $L^2(\bH^1)$ such that $Q_A(u)$ computed in the sense of distributions is finite.
This is a consequence of the diamagnetic inequality (see \cite[Thm.~7.21]{LiebLoss} for a proof that extends to the present case):
\begin{equation}\label{eq:diamagnetic}
	Q_A(u) \ge \int_{\bH^1} |\dH (|u|)|^2\,dq, \qquad \forall u\in C^\infty_c(\bH^1).
\end{equation}





\paragraph{Poincaré gauge}
As in the Riemannian case, if $A\in \OmegaH^1(U)$ is a vector potential and $f\in C^\infty(U)$, the magnetic sub-Laplacian $\Delta_{A}$ is unitarily equivalent to $\Delta_{A+\dH f}$. 
Recall that, in this case, we say that $A$ and $A+\dH f$ are \emph{gauge equivalent}.

Since $H_{\mathrm H}^1(\bH^1)=H_{\mathrm{dR}}^1(\bR^3)=\{0\}$ it follows that magnetic sub-Laplacians on $\bH^1$ are completely determined (up to unitary transformations) by the corresponding horizontal magnetic field.  

In the sequel we will consider the following generalization of the classical Poincaré (or multipolar) gauge, that allows us to choose the most convenient gauge for the vector potential. 

\begin{proposition}\label{prop:gauge}
	Let $B=b_1 dr\wedge\omega + b_2 rd\varphi\wedge\omega$ be a horizontal magnetic field on $\bH^1$. Then, a vector potential for $B$ is given by
	\begin{equation}
		A(r,\varphi, z) = \alpha(r,\varphi,z) \,d\varphi, 
		\quad\text{where}\quad
		\alpha(r,\varphi,z) = \int_0^r \int_0^t b_1(s,\varphi,z)\,ds\,tdt.
	\end{equation}
\end{proposition}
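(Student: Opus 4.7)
The plan is to verify the claim by direct computation using Proposition~\ref{prop:DA}, together with the formula \eqref{eq:b2} for $b_2$ from Proposition~\ref{prop:primitive}, which expresses the second component of the magnetic field in terms of the first one via the closure of $B$.

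First, I would rewrite $A = \alpha\,d\varphi$ in the form used by Proposition~\ref{prop:DA}, namely $A = \alpha_1 dr + \alpha_2\,rd\varphi \mod\omega$, so that $\alpha_1=0$ and $r\alpha_2 = \alpha$. Plugging this into \eqref{eq:gamma} gives $\gamma = \tfrac{1}{r}R(\alpha)$, and using the explicit expression
\begin{equation*}
  \alpha(r,\varphi,z) = \int_0^r \int_0^t b_1(s,\varphi,z)\,ds\,tdt,
\end{equation*}
a direct differentiation yields $\partial_r\alpha = r\int_0^r b_1(s,\varphi,z)\,ds$, hence $\gamma(r,\varphi,z) = \int_0^r b_1(s,\varphi,z)\,ds$. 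Consequently $R\gamma - Z\alpha_1 = \partial_r\gamma - 0 = b_1$, which matches the first coefficient of \eqref{eq:DA}.

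Next I would check the second coefficient, $\Phi\gamma - Z\alpha_2 = b_2$. Using $\Phi = \tfrac{1}{r}\partial_\varphi + \tfrac{r}{2}\partial_z$,
\begin{equation*}
  \Phi\gamma = \frac{1}{r}\int_0^r \partial_\varphi b_1(s,\varphi,z)\,ds + \frac{r}{2}\int_0^r \partial_z b_1(s,\varphi,z)\,ds.
\end{equation*}
For $Z\alpha_2 = \tfrac{1}{r}\partial_z\alpha$, applying Fubini converts the iterated integral into $\partial_z\alpha = \int_0^r \partial_z b_1(s,\varphi,z)\,\tfrac{r^2-s^2}{2}\,ds$. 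Subtracting, the $\tfrac{r}{2}\int_0^r \partial_z b_1\,ds$ term combines with $-\tfrac{1}{r}\int_0^r \partial_z b_1\,\tfrac{r^2-s^2}{2}ds$ to give $\tfrac{1}{r}\int_0^r \partial_z b_1\,\tfrac{s^2}{2}\,ds$. Recognizing $\tfrac{1}{s}\partial_\varphi b_1 + \tfrac{s}{2}\partial_z b_1 = \Phi b_1$, one obtains
\begin{equation*}
  \Phi\gamma - Z\alpha_2 = \frac{1}{r}\int_0^r \Phi b_1(s,\varphi,z)\,s\,ds,
\end{equation*}
which is exactly $b_2$ by \eqref{eq:b2}.

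The only mildly delicate step is the Fubini rearrangement and bookkeeping of the two integrals involving $\partial_z b_1$, but everything is a routine verification once one invokes \eqref{eq:b2}; conceptually, the existence of $A$ is guaranteed by $H^1_{\mathrm H}(\bH^1)=\{0\}$, and the content of the statement is just the explicit Poincaré-type formula. No conceptual obstacle is expected.
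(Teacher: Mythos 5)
Your proof is correct, and the computations check out: with $\alpha_1=0$, $r\alpha_2=\alpha$ one indeed gets $\gamma=\int_0^r b_1\,ds$, the first coefficient of $DA$ is $b_1$, and your Fubini rearrangement $\partial_z\alpha=\int_0^r\partial_z b_1(s,\varphi,z)\,\tfrac{r^2-s^2}{2}\,ds$ correctly collapses $\Phi\gamma-Z\alpha_2$ to $\tfrac1r\int_0^r\Phi b_1(s,\varphi,z)\,s\,ds$, which is $b_2$ by \eqref{eq:b2}. The first half of your argument coincides with the paper's; the difference is in how the second coefficient is handled. The paper does not compute $\Phi\gamma-Z\alpha_2$ at all: it observes that $DA$ is automatically closed (since $d\circ D=0$ in the Rumin complex), so Proposition~\ref{prop:primitive} — a closed horizontal $2$-form is completely determined by its $dr\wedge\omega$ component — forces $DA=B$ as soon as the first components agree. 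Your route replaces this structural shortcut by a direct verification against the explicit formula \eqref{eq:b2}; note that \eqref{eq:b2} is itself derived from the closedness of $B$, so both arguments ultimately rest on the same fact. The paper's version is shorter and avoids the Fubini bookkeeping; yours has the mild advantage of being a self-contained check that does not require trusting $d\circ D=0$, at the cost of a slightly longer computation.
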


\begin{proof}
    By Proposition~\ref{prop:DA}, since $\gamma(r,\varphi,z) = \int_0^r b_1(t,\varphi,z)dt$, we have
	\begin{equation}
		DA = 
		b_1 dr\wedge\omega + \left( \Phi R \alpha - Z(r^{-1}\alpha) \right) rd\varphi\wedge\omega.
	\end{equation}
	Since $DA$ is closed, Proposition~\ref{prop:primitive} guarantees that it coincides with $B$.
\end{proof}

\begin{proposition}\label{prop:core}
	Let $B = DA$ be a horizontal magnetic field on $\bH^1$. Then, $C^\infty_c(\bH^1\setminus\cZ)$ is a core for $Q_A$.
\end{proposition}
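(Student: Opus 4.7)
My approach is the standard capacity-style cutoff argument, adapted to the Heisenberg setting. Since by definition $Q_A$ is the closure of the form \eqref{eq:QA} restricted to $C_c^\infty(\bH^1)$, it suffices to construct, for every $u \in C_c^\infty(\bH^1)$, a sequence $u_\varepsilon \in C_c^\infty(\bH^1 \setminus \cZ)$ with $u_\varepsilon \to u$ in $L^2(\bH^1)$ and $Q_A(u - u_\varepsilon) \to 0$. The heuristic is that the center $\cZ$ has homogeneous codimension~$2$ in $\bH^1$ (the $z$-direction carrying weight~$2$), so it plays the role of a point in $\R^2$ and should be horizontally polar.

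To implement this, I would fix a radial logarithmic cutoff: a smooth non-decreasing $\chi_\varepsilon : [0,\infty) \to [0,1]$ depending only on $r$, with $\chi_\varepsilon \equiv 0$ for $r \leq e^{-1/\varepsilon}$ and $\chi_\varepsilon \equiv 1$ for $r \geq 1$, tuned so that $\chi_\varepsilon'(r) \sim \varepsilon/r$ on the transition interval; a direct computation then gives
\[
\int_0^{+\infty} (\chi_\varepsilon'(r))^2 \, r\,dr = O(\varepsilon) \quad \text{as } \varepsilon \downarrow 0 .
\]
Setting $u_\varepsilon := \chi_\varepsilon\, u$ yields $u_\varepsilon \in C_c^\infty(\bH^1 \setminus \cZ)$, and $u_\varepsilon \to u$ in $L^2(\bH^1)$ by dominated convergence. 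The Leibniz rule gives
\[
(\dH + iA)(u - u_\varepsilon) = (1-\chi_\varepsilon)(\dH + iA) u \;-\; u\,\dH\chi_\varepsilon .
\]
The first term tends to $0$ in $L^2$ by dominated convergence, since $(\dH + iA)u \in L^2(\bH^1)$ and $1-\chi_\varepsilon \to 0$ pointwise. For the second, the identities $Xr = \cos\varphi$, $Yr = \sin\varphi$ yield $|\dH\chi_\varepsilon|^2 = (\chi_\varepsilon'(r))^2$, independent of $A$, $\varphi$ and $z$; together with the boundedness and compact support of $u$, Fubini and the capacity estimate above give
\[
\int_{\bH^1} |u|^2\,|\dH\chi_\varepsilon|^2\, dq \;\leq\; C \int_0^{+\infty}(\chi_\varepsilon'(r))^2\, r\,dr \;=\; O(\varepsilon) \;\to\; 0 .
\]

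The only potentially delicate point is the choice of the cutoff: a naive bump of width $\varepsilon$ would give $\int(\chi_\varepsilon')^2\,r\,dr = O(1)$ and fail, because the measure factor $r\,dr$ exactly absorbs the localization gain, forcing the logarithmic profile — this is the Heisenberg counterpart of the classical planar argument showing that points are removable for the Dirichlet form. Beyond this, since $\dH\chi_\varepsilon$ is a real radial $1$-form and does not couple to $A$, the argument is completely insensitive to the smoothness or size of the vector potential, and therefore applies verbatim to the (singular) Aharonov--Bohm potentials considered later in the paper, for which $Q_A$ must anyway be initiated on $C_c^\infty(\bH^1 \setminus \cZ)$ from the start.
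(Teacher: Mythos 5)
Your proof is correct and follows essentially the same route as the paper: a logarithmic cutoff in the distance $r$ to the center, the Leibniz rule to split $(\dH+iA)(u-u_\varepsilon)$ into a term handled by dominated convergence and a term $u\,\dH\chi_\varepsilon$ controlled by the capacity estimate $\int_0^\infty(\chi_\varepsilon')^2\,r\,dr\to 0$ (the paper's version uses the scales $[1/n^2,1/n]$ and the bound $\int_{\{1/n^2<r<1/n\}}r^{-2}\,dq\leq C\log n$, which is the same computation in a different parametrization). Your observation that $|\dH\chi_\varepsilon|=|\chi_\varepsilon'(r)|$ and that the cutoff term does not couple to $A$ matches the structure of the paper's argument exactly.
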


\begin{proof}
 	It suffices to show that for all  $u \in
        C^\infty_c(\bH^1)$
        there exists a sequence $u_n \in
        C^\infty_c(\bH^1\setminus\cZ)$ such that
        $\|u_n-u\|_{L^2}+Q_B(u_n-u)\to 0$ as $n\to \infty$.
        Let $\psi:[0,1]\to[0,1]$ be a smooth function such that $\psi=0$
in a right neighborhood of~$0$ and $\psi=1$ in a left neighborhood
of~$1$ and for any natural number $n \geq 2$, let $\eta_n:\bH^1 \to[0,1]$ be
defined by
\begin{equation}
\eta_n(q) :=
\begin{cases}
  0
  & \mbox{if}\quad \abs{\xi}<1/n^2 \,,
  \\
  \psi\big(\log_n(n^2 \abs{\xi})\big)
  & \mbox{if}\quad \abs{\xi}\in[1/n^2,1/n] \,,
  \\
  1
  & \mbox{if}\quad  \abs{\xi} > 1/n.
\end{cases}
\end{equation}
Let $u \in C^\infty_c(\bH^1)$ and for all $n\geq 2$ let
$u_n := \eta_n u \in C^\infty_c(\bH^1\setminus\cZ)$. By dominated
convergence, $u_n \to u$ in $L^2(\bH^1)$ as $n \to \infty$.
Moreover, 
we have 
\begin{equation}
  \begin{split}
    Q_A(u_n - u)
     = \, &
    \int_{\bH^1} \abs{(\eta_n -1)(\dH + i A)u + u \,
      \dH\eta_n}^2 \, dq \\
   \leq \, &
  2\int_{\bH^1} \abs{\eta_n -1}^2 \abs{(\dH + iA) u}^2 \, dq\\
  & +
  2 \frac{\norm{u}_\infty^2 \norm{\psi'}_\infty^2}{\log^2 n}
  \int_{\{\frac{1}{n^2}<\abs{\xi}<\frac{1}{n}\}\cap\supp u} \frac{1}{\abs{\xi}^2} \,
  dq = I_n + J_n.
\end{split}
\end{equation}
Since there exists $C>0$ such that
\begin{equation}
  \int_{\{\frac{1}{n^2}<\abs{\xi}<\frac{1}{n}\}\cap\supp u} \frac{1}{\abs{\xi}^2} \,
  dq
  \leq C \log n,
\end{equation}
we obtain that $J_n \to 0$ as $n\to \infty$. Moreover, by the dominated convergence theorem, also $I_n\to 0$ as $n\to \infty$, concluding the proof.
\end{proof}

\section{Magnetic Poincaré inequalities}

In this section we focus on how uniform magnetic fields interact with the bottom of the spectrum of the corresponding sub-Laplacian
and prove Theorem~\ref{thm:uniform}. 

\begin{definition}
    We say that $B$ is a \emph{uniform magnetic field} on $\mathbb{H}^1$ if there exists $b = (b_1,b_2)\in \mathbb{R}^2$ such that 
    \begin{equation}\label{field.uniform}
        B = b_1\, dx\wedge\omega + b_2\, dy\wedge\omega.
    \end{equation}
    In this case, we let $|B| := \sqrt{|b_1|^2 +|b_2|^2}$.
\end{definition}
We start by choosing a convenient gauge.

\begin{proposition}\label{prop:gauge-uniform}
    Let $(b_1,b_2)\in \mathbb{R}$ 
    and consider the uniform magnetic field~\eqref{field.uniform}.
    Then, up to a linear change of variables we have
    \begin{equation}\label{eq:B-unif}
        B = |B|\, dx\wedge \omega.
    \end{equation}
    A corresponding vector potential is
    \begin{equation}
        A = |B|\,  \frac{x^2}2  dy.
    \end{equation}
\end{proposition}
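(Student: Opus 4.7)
The plan is to split the proof into two parts corresponding to the two assertions: first, the reduction to the normal form $B=|B|\,dx\wedge\omega$ via a linear change of coordinates; second, the direct verification that the proposed $A$ is a vector potential for this normal form.

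For the reduction, the idea is to use rotations around the $z$-axis. For $\theta\in\mathbb{S}^1$, set
\begin{equation*}
  \Psi_\theta(x,y,z) := (x\cos\theta + y\sin\theta,\, -x\sin\theta + y\cos\theta,\, z).
\end{equation*}
A quick check shows that $\Psi_\theta$ is an automorphism of the Heisenberg group law, because $xdy-ydx$ is invariant under planar rotations; in particular $\Psi_\theta^*\omega = \omega$. Writing $(\tilde x,\tilde y,z)=\Psi_\theta(x,y,z)$, one computes $d\tilde x = \cos\theta\,dx + \sin\theta\,dy$ and $d\tilde y = -\sin\theta\,dx + \cos\theta\,dy$, so that
\begin{equation*}
  b_1\,dx\wedge\omega + b_2\,dy\wedge\omega
  = (b_1\cos\theta+b_2\sin\theta)\,d\tilde x\wedge\omega
    +(-b_1\sin\theta+b_2\cos\theta)\,d\tilde y\wedge\omega.
\end{equation*}
Choosing $\cos\theta=b_1/|B|$, $\sin\theta=b_2/|B|$ (the case $|B|=0$ being trivial) kills the $d\tilde y\wedge\omega$ component and leaves $B=|B|\,d\tilde x\wedge\omega$, proving~\eqref{eq:B-unif} up to the linear change of variables $\Psi_\theta$.

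For the vector potential, I would directly apply formula~\eqref{eq:DA-xy} to $A=|B|\frac{x^2}{2}\,dy$, i.e.\ $A_x=0$ and $A_y=|B|\frac{x^2}{2}$. One first computes
\begin{equation*}
  XA_y - YA_x
  = \Bigl(\partial_x - \frac{y}{2}\partial_z\Bigr)\!\Bigl(|B|\,\frac{x^2}{2}\Bigr)
  = |B|\,x.
\end{equation*}
Substituting into~\eqref{eq:DA-xy} gives
\begin{equation*}
  b_1 = X(|B|\,x) - Z(0) = |B|,
  \qquad
  b_2 = Y(|B|\,x) - Z\!\Bigl(|B|\,\frac{x^2}{2}\Bigr) = 0,
\end{equation*}
so $DA = |B|\,dx\wedge\omega = B$, as desired.

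There is really no major obstacle in this proposition; it is essentially a bookkeeping exercise. The only point that requires a moment's attention is checking that the rotations $\Psi_\theta$ are honest automorphisms of the Heisenberg structure (so that they preserve the class $\OmegaH^\bullet$ and in particular the operator $D$), but this follows at once from the rotational invariance of $xdy-ydx$. The rest is a three-line computation using the explicit form of $X$, $Y$, $Z$ and the identity~\eqref{eq:DA-xy}.
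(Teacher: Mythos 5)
Your proof is correct and follows essentially the same route as the paper: a rotation in the $(x,y)$-plane (which preserves $\omega$) to reduce $B$ to the normal form, followed by a direct verification that $DA=B$ using~\eqref{eq:DA-xy}. You simply spell out the computations that the paper leaves implicit.
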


\begin{proof}
    To reduce the form of $B$ to \eqref{eq:B-unif} it suffices to consider the change of variables $(x',y')^\top= R(x,y)^\top$, where $R$ is the rotation matrix such that $(|B|,0)^\top = R(b_1,b_2)^\top$. The fact that $A = |B|\frac{x^2}{2}dy$ is a vector potential for $B$ follows from the explicit computation of $DA$ presented in \eqref{eq:DA-xy}.
\end{proof}

Before we proceed with the proof of Theorem~\ref{thm:uniform},
let us make a couple of comments on the result.

\begin{remark}
    \label{rem:montgomery}
    The non-linear growth with respect to~$|B|$ 
    in~\eqref{Poincare.Heisenberg}
    has already been identified in \cite{Montgomery1995}, for magnetic fields in the Euclidean plane vanishing on curves. Indeed, to treat this case, the author associates to these magnetic fields a non-equiregular sub-Riemannian problem (on the model of the Martinet distribution) and then applies a desingularization procedure that brings him to the Engel group (i.e., the Carnot group of step $3$ and rank $2$). His techniques translate verbatim to the present case, where we could proceed by ``lifting'' the uniform magnetic field in the Heisenberg group (the Carnot group of step $2$ and rank $2$) to the sub-Laplacian of the Engel group. This is a consequence of the relations
    \begin{equation}\label{eq:commutatori-heis-magnetic}
        -i b_1 = [[X+iA_x, Y+iA_y], X+iA_x], \qquad -ib_2 = [[X+iA_x, Y+iA_y], Y +iA_y],
    \end{equation}
holding for a magnetic field $B = b_1 dx\wedge \omega + b_2 dy\wedge\omega$ with magnetic potential $A = A_x dx + A_y dy \mod \omega$ and deduced by \eqref{eq:DA-xy}.
\end{remark}

\begin{remark} In the Euclidean plane, a better estimate can be proved. 
    Namely, for a magnetic field $B(x,y)\; dx\wedge dy=dA$ in $\R^2$, with $A=A_xdx+A_ydy$ and $B\geq 0$, there holds $-\Delta_A\ge B$ in the sense of quadratic forms, see e.g.~\cite[Thm.~3]{Montgomery1995} and references therein. This follows from the fact that in two dimensions we have
    \begin{equation}\label{eq:comm}
    -iB = [\partial_x + iA_x, \partial_y +iA_y].
    \end{equation}
    In the higher dimensional case, weaker semiclassical results  are available \cite{Helffer97-ln}. 
    As observed above, \eqref{eq:comm} is not available in the Heisenberg setting, being replaced by \eqref{eq:commutatori-heis-magnetic}.
\end{remark}

\begin{proof}[Proof of Theorem~\ref{thm:uniform}.]
    Let $A$ be the magnetic potential in the gauge introduced in Proposition~\ref{prop:gauge-uniform}. Then, the magnetic sub-Laplacian reads
    \begin{equation}
        -\Delta_A = -X^2 - \left(Y + i{|B|}\frac{x^2}{2}\right)^2.
    \end{equation}

    To simplify the computations, we perform the change of variables $z\mapsto w = z+\frac{xy}{2}$. In the new coordinates $(x,y,w)$, the vector fields generating the distribution read
    \begin{equation}
        X = \partial_x 
        \qquad\text{and}\qquad
        Y = \partial_y + x\partial_w.
    \end{equation}
    Hence, 
    \begin{equation}
        -\Delta_A = -\partial_x^2 - \left( \partial_y +x \partial_w+i |B| \frac{x^2}2 \right)^2.
    \end{equation}

    Performing a Fourier transform in the $y$ and $z$ variables, we obtain the following decomposition
    \begin{equation}
        -\Delta_A = \bigoplus_{\eta,\nu\in\mathbb{R}}L^{\eta,\nu}, 
        \qquad
        {L^{\eta,\nu} = -\partial_x^2+\left( \nu+x\eta-|B|\frac{x^2}2 \right)^2}.
    \end{equation}
    Finally, the change of variables {$t = x-\frac{\eta}{|B|}$}  allows us to write
    \begin{equation}
        L^{\eta,\nu} = L^g := -\partial_t^2 + \left(\frac{|B|}{2}t^2+g\right)^2, \qquad \
        g=-\frac{\eta^2}{2|B|}-\nu.
    \end{equation}
    The statement then follows by Lemma~\ref{lem:quartic} below.
\end{proof}

The following result is well-known (see, e.g., \cite[Sec.~5]{Montgomery1995}). We present a proof for completeness.

\begin{lem}
    \label{lem:quartic}
    For $b>0$ and $g\in \mathbb{R}$ let us consider 
    the maximal realization of
    the operator  
    \begin{equation}
        L^g_b := -\partial_t^2 {+\left(\frac{b}{2}t^2 + g\right)^2}
        \qquad\mbox{in}\qquad
        L^2(\mathbb{R})
        \,.
    \end{equation}   
    Then its spectrum satisfies
    \begin{equation}
        \min_{g\in \mathbb{R}} \inf \spec(L_b^g) = c b^{2/3}, \qquad c=\min_{g\in \mathbb{R}} \inf\spec(L_1^g) >0.
    \end{equation}
\end{lem}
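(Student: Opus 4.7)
The plan is to first reduce the problem to the case $b=1$ by a dilation, and then to show that the map $g \mapsto \inf\sigma(L_1^g)$ attains a strictly positive minimum. To this end, I would introduce the unitary dilation $(U_\beta f)(t) := \beta^{1/2} f(\beta t)$ on $L^2(\R)$. A direct calculation yields
\[
U_\beta^{-1} L_b^g U_\beta = -\beta^2 \partial_t^2 + \left(\frac{b}{2\beta^2}t^2 + g\right)^2,
\]
and choosing $\beta = b^{1/3}$ one obtains the unitary equivalence $L_b^g = b^{2/3} \, U_{b^{1/3}} L_1^{g/b^{1/3}} U_{b^{1/3}}^{-1}$. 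In particular $\inf\sigma(L_b^g) = b^{2/3} \inf\sigma(L_1^{g/b^{1/3}})$, and since $g\mapsto g/b^{1/3}$ is a bijection of $\R$, minimizing over $g$ reduces the identity in the statement to proving that $c := \min_{g\in \R} \inf\sigma(L_1^g)$ exists and is strictly positive.

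Since the potential $V_g(t) := (t^2/2 + g)^2$ is nonnegative and diverges as $|t| \to \infty$, the operator $L_1^g$ has compact resolvent, hence purely discrete spectrum, and its lowest eigenvalue $\lambda_0(g) \ge 0$ is attained by a positive ground state $\psi_0$. I would argue $\lambda_0(g) > 0$ by contradiction: if $\lambda_0(g) = 0$, the equation $\psi_0'' = V_g \psi_0 \ge 0$ forces $\psi_0$ to be a positive convex function on $\R$; then $\psi_0'$ is nondecreasing, and its limits at $\pm\infty$ must both vanish (otherwise $\psi_0$ would grow at least linearly at infinity and fail to be square-integrable), so $\psi_0$ is constant, contradicting $\psi_0 \in L^2(\R)$.

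To see that $c>0$, I would finally show $\lambda_0(g) \to +\infty$ as $|g| \to \infty$, which, combined with the continuity of $g \mapsto \lambda_0(g)$ provided by standard perturbation theory, yields a strictly positive minimum attained at some $g_* \in \R$. For $g \to +\infty$ this is immediate, since $V_g \geq g^2$ gives $\lambda_0(g) \geq g^2$. I expect the case $g \to -\infty$ to be the main obstacle: here $V_g$ has a double-well structure with minima at $t_\pm = \pm\sqrt{-2g}$ and harmonic approximation $-\partial_t^2 + 2|g|(t-t_\pm)^2$, of ground state energy $\sqrt{2|g|}$. Heuristically, any normalized state essentially supported on an interval of length $\delta$ around a well has kinetic energy $\gtrsim \delta^{-2}$ and potential energy $\gtrsim |g|\delta^2$, giving $\langle L_1^g \psi,\psi \rangle \gtrsim \delta^{-2} + |g|\delta^2 \gtrsim \sqrt{|g|}$ by AM--GM. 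A rigorous version of this bound can be obtained by a standard IMS-type localization argument isolating the two wells and comparing the restrictions there to their harmonic approximations.
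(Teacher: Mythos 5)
Your proposal is correct and its skeleton coincides with the paper's: the dilation $s=b^{1/3}t$ reducing to $b^{2/3}L_1^{g/b^{1/3}}$, discreteness of the spectrum, positivity of $\lambda_0(g)$, continuity in $g$ via perturbation theory, and divergence of $\lambda_0(g)$ as $g\to-\infty$ are exactly the steps in the paper (for $g\ge 0$ the paper just uses the monotone bound $L_1^g\ge L_1^0>0$, and for positivity of $\lambda_0(g)$ it invokes min--max directly, which is a shortcut past your convexity/ODE argument: if $\lambda_0(g)=0$ then $\|\psi_0'\|^2+\int V_g|\psi_0|^2=0$ forces $\psi_0$ constant). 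The one place where you genuinely diverge is the crux, the lower bound as $g\to-\infty$, and it is also the only step you leave at the level of a sketch. Where you propose an IMS localization around the two wells, the paper uses a one-line pointwise inequality: writing $V_g(s)=g^2\bigl(\tfrac{|s|}{\sqrt{2|g|}}-1\bigr)^2\bigl(\tfrac{|s|}{\sqrt{2|g|}}+1\bigr)^2$ and discarding the last factor (which is $\ge 1$) gives $V_g(s)\ge\bigl(\sqrt{|g|/2}\,|s|-|g|\bigr)^2$, i.e.\ a V-shaped harmonic potential with frequency $\sqrt{|g|/2}$, whose ground-state energy grows like $\sqrt{|g|}$ --- the same $\sqrt{|g|}$ scale your AM--GM heuristic predicts, obtained without any partition of unity or localization error estimates. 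Your IMS route would certainly work (this is a textbook symmetric double well), but to make it a proof you would still have to construct the partition, control the $|\nabla\chi_i|^2$ localization errors, and bound the Neumann (or comparison) problems on each well from below; the paper's potential-minorization trick buys you all of that for free, so I would recommend adopting it.
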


\begin{proof}
{First of all,
we recall (see, e.g., \cite{Evans-Zett_1978})
that $L^g_b$ is essentially self-adjoint on $C^\infty_c(\mathbb R)$
and that its closure coincides with the maximal realisation.
Moreover, one has the useful separation property
$\dom(L^g_b)=\{u \in W^{2,2}(\R)\,|\, u \in L^2(\R,t^4 \, dt)\}$.
}

   We start by performing the change of variables $s=b^{1/3}t$, obtaining
   \begin{equation}
   L_b^g = -b^{2/3} \partial_s^2 +\left( b^{1/3} \frac{s^2}2+g \right)^2 = b^{2/3} L_1^{g/\sqrt[3]{b}}.
   \end{equation}
  Observe that for any $g\in\R$ the operator $L_1^{g}=-\partial_s^2+V_g$, with $V_g(s)=({s^2}/2+g)^2$, has discrete spectrum
  (see e.g.,  \cite[Thm.~XIII.67]{RS4}).
  Let $\lambda(g)$ denote its first eigenvalue. That is, $\lambda(g)=\min\spec(L_1^g)$. 
  By the min-max principle, we obtain that $\lambda(g)>0$ for any $g\in \mathbb R$.
  To complete the proof of the statement we need to show that the map $\lambda:g\in\mathbb R \mapsto\lambda(g)\in (0,+\infty)$ admits a positive minimum. 
  
  Note that $\lambda$ is continuous
  (see, e.g., \cite[VII, Thm.~3.9]{Kato}) 
  and that for any $g\geq 0$ we have $L_{1}^{g}\geq L_1^0>0$.  
  Moreover, we claim that that $\lambda(g)\to+\infty$ as $g\to-\infty$. In fact, if $g<0$, we have 
  \begin{equation}
  V_g(s)= g^2 \left(\frac{s^2}{2|g|}-1\right)^2=g^2 \left(\frac{|s|}{\sqrt{2|g|}}-1\right)^2\left(\frac{|s|}{\sqrt{2|g|}}+1\right)^2\geq g^2 \left(\frac{|s|}{\sqrt{2|g|}}-1\right)^2. 
  \end{equation}
  The claim, and thus the statement, follows since $\sqrt{|g|/2}$ is the first eigenvalue of the harmonic oscillator \begin{equation}
      -\partial_s^2+ \left(\sqrt{\frac{|g|}{2}}s-|g|\right)^2. 
  \end{equation}
This concludes the proof of the lemma.  
\end{proof}

\section{Magnetically induced Hardy inequalities from the center}


In this section we show that magnetic fields induce 
Hardy-type inequalities from the center~$\mathcal{Z}$ 
of the Heisenberg group.
More specifically, in Sections~\ref{ss:AB} and~\ref{ss:cyl}
we respectively prove Theorem~\ref{thm:lw2-heis}
for Aharonov--Bohm potentials 
and Theorem~\ref{thm:lw1-heis}
for mild cylindrically supported magnetic fields.  

\subsection{Hardy inequalities from the center for Aharonov--Bohm potentials}\label{ss:AB}
In this subsection, we are concerned with inequalities of the form 
\begin{equation}\label{eq:Hardy-center}
  -\Delta_A\ge \frac{c}{r^2}
  \qquad \mbox{in} \qquad L^2(\mathbb{H}^1 \setminus \mathcal{Z})
  \,,
\end{equation}
where $c > 0$, $r(x) := \sqrt{x^2+y^2}$
is the distance to the $z$-axis
and~$A$ is an Aharonov--Bohm potential.
Such inequalities fail in the magnetic-free case;
indeed, if $B=0$, it is known \cite{Ruzhansky2017}
that the best constant in the above inequality is $c=0$.  

Motivated by the classical Aharonov--Bohm phenomenon, we pose the following.

\begin{definition}\label{def:AB}
	An Aharonov--Bohm potential is a non-exact but closed vector potential.
\end{definition}

Observe that if $A$ is an Aharonov--Bohm potential on $U\subset \bH^1$ the corresponding magnetic field is $B=DA=0$. In this section we will show that, as in the Euclidean case, the associated magnetic sub-Laplacian may not be unitarily equivalent to the unperturbed Heisenberg sub-Laplacian.  

Clearly, for an Aharonov--Bohm potential to exist on $U\subset \bH^1$ it is necessary that $H_{\mathrm H}^1(U)\neq 0$. As a consequence, as already implicitly observed, no Aharonov--Bohm potential exist on $\bH^1$.
The simplest case with non-trivial cohomology is $U=\bH^1\setminus \cZ$, 
where $\cZ:=\{x=y=0\}$. In this case,
\begin{equation}\label{eq:cohomol}
	H_{\mathrm H}^1(\bH^1\setminus\cZ)=H_{\mathrm dR}^1(\R^3\setminus\cZ)=\bR.
\end{equation}


\begin{proposition}\label{prop:AB}
    Up to gauge invariance, any Aharonov--Bohm potential in $\bH^1\setminus\cZ$ is of the form	
	\begin{equation}\label{eq:AB}
		A_\alpha={\alpha} \, d\varphi, 
		\qquad 
		\alpha\in \bR\setminus\mathbb Z.
	\end{equation}
	Moreover, ${A_\alpha}$ is gauge equivalent to $A_{\tilde \alpha}$ whenever $\alpha = n + \tilde\alpha$, with $n\in\mathbb Z$ and $\tilde \alpha\in \mathbb R$.
\end{proposition}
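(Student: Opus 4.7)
The plan is to reduce the classification of Aharonov--Bohm potentials on $\bH^1\setminus\cZ$ to a corresponding de Rham problem on $\bR^3\setminus\cZ$, exploiting the cohomological identification \eqref{eq:cohomol} recalled in the excerpt. Since $H^1_{\mathrm H}(\bH^1\setminus\cZ)=\bR$ is generated by $[d\varphi]$, the space of closed horizontal $1$-forms modulo exact ones is a single $1$-parameter family, and this family will turn out to be exactly the $\alpha\, d\varphi$.

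Concretely, given a closed $A\in\OmegaH^1(\bH^1\setminus\cZ)$, the first step is to produce a canonical de Rham lift $\tilde A:=A+A_\omega\,\omega\in\Omega^1(\bH^1\setminus\cZ)$ by picking $A_\omega=XA_y-YA_x$ as in \eqref{eq:Aomega}. With this choice, the formula \eqref{eq:DA-xy} for the Rumin operator shows that $DA=d\tilde A$, so the assumption $DA=0$ becomes the genuine closedness $d\tilde A=0$ in $\Omega^1(\bH^1\setminus\cZ)$. Invoking the de Rham isomorphism on $\bR^3\setminus\cZ$ with $[d\varphi]$ as a generator, I can then write $\tilde A=\alpha\,d\varphi+df$ for some $\alpha\in\bR$ and $f\in C^\infty(\bH^1\setminus\cZ)$. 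Projecting modulo $\omega$ and using that $df\equiv \dH f\pmod\omega$ gives $A=\alpha\,d\varphi+\dH f$ in $\OmegaH^1$, which is exactly the claim that $A$ is gauge equivalent to $A_\alpha$.

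For the second assertion, I would appeal to the quantum-mechanical gauge principle recalled in the paper: the transformation $u\mapsto e^{ig}u$ produces a unitary equivalence between $\Delta_{A}$ and $\Delta_{A+\dH g}$ whenever $e^{ig}$ is a single-valued smooth function on $\bH^1\setminus\cZ$, even if the phase $g$ itself is multi-valued. Choosing $g=(\alpha-\tilde\alpha)\varphi$ yields $\dH g=(\alpha-\tilde\alpha)\,d\varphi\pmod\omega$ pointwise, while $e^{ig}=e^{i(\alpha-\tilde\alpha)\varphi}$ is single valued on $\bH^1\setminus\cZ$ precisely when $\alpha-\tilde\alpha\in\bZ$. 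In particular, $A_\alpha$ is gauge equivalent to the zero potential whenever $\alpha\in\bZ$, so the genuinely Aharonov--Bohm range in Definition~\ref{def:AB} is $\alpha\in\bR\setminus\bZ$.

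The delicate point I anticipate is carefully separating \emph{strict} Rumin-exactness, which would demand a single-valued primitive $g\in C^\infty(\bH^1\setminus\cZ)$, from the broader equivalence available at the level of the magnetic sub-Laplacians, where only the exponential $e^{ig}$ is required to be globally defined. It is exactly this distinction that turns $\alpha$ into a modular parameter in $\bR/\bZ$ rather than a genuine $\bR$-valued invariant, reconciling the closed-but-non-exact definition of Aharonov--Bohm potentials with the exclusion of integer values of~$\alpha$.
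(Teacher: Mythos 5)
Your argument is correct, and the classification step takes a genuinely different route from the paper's. The paper stays entirely inside the Rumin complex: it first shows by a direct computation that the classes $[A_\alpha]$ are pairwise distinct (if $A_\alpha - A_\beta = \dH f$ then $Rf=0$ and $\Phi f = (\alpha-\beta)/r$, forcing $f=f(\varphi)$ with $\partial_\varphi f = \alpha-\beta$, which contradicts Stokes' theorem on $\bS^1$), and only then invokes the one-dimensionality of $H^1_{\mathrm H}(\bH^1\setminus\cZ)$ from \eqref{eq:cohomol} to conclude that the injective linear map $\alpha\mapsto[A_\alpha]$ is onto. You instead pass through the canonical de Rham lift $\tilde A = A + (XA_y-YA_x)\,\omega$, for which $DA=d\tilde A$, and import surjectivity directly from the de Rham decomposition $\tilde A=\alpha\,d\varphi+df$ on $\R^3\setminus\cZ$. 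Your route is shorter and makes the role of the contact lift transparent; the paper's buys an explicit, self-contained verification that $\alpha$ is a genuine invariant, which your argument obtains only from the standard fact that $[d\varphi]$ generates $H^1_{\mathrm{dR}}(\R^3\setminus\cZ)$ --- a point worth one sentence (e.g.\ $\int_\gamma d\varphi = 2\pi\neq 0$ over a loop encircling $\cZ$), since \eqref{eq:cohomol} alone gives the dimension of the group but not that $[d\varphi]$ is a nonzero class. The second assertion is handled identically in both proofs, via the unitary $u\mapsto e^{-in\varphi}u$; your closing remark separating strict exactness (a single-valued primitive $g$) from unitary gauge equivalence (only $e^{ig}$ single-valued) is precisely the distinction the paper exploits implicitly when it declares $A_n$ equivalent to $A_0$ even though $n\,d\varphi$ is not Rumin-exact.
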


\begin{proof}
    Proposition~\ref{prop:DA} implies at once that $A_\alpha$ 
    is closed for any $\alpha\in\R$, 
    i.e., $DA_\alpha = 0$. 
    
    We claim that, for $\alpha,\beta\in \R$ with $\alpha\neq \beta$, the forms $A_\alpha$ and $A_\beta$ belong to different cohomology classes. 
    In view of \eqref{eq:cohomol}, this ensures that any closed form $\varpi$ is gauge equivalent to $A_\alpha$ for some $\alpha\in \R$ (i.e., $\varpi = A_\alpha + \dH f$ for $\alpha\in \mathbb R$ and $f\in C^\infty(\bH^1\setminus\cZ)$). In particular, any Aharonov--Bohm potential is gauge equivalent to $A_\alpha$ for some $\alpha\in \mathbb R$.

	To prove the claim, let $\alpha\neq \beta$ and assume by contradiction that there exists $f\in C^\infty(\bH^1\setminus\cZ)$ such that  $A_\alpha=A_\beta+\dH f$. This implies that {$Rf=0$ and} $\Phi f = ({\alpha-\beta})/r$. In particular, the first relation implies that $f$ is independent of the $r$ variable. This fact, together with the second relation, yields that $\partial_z f=0$ and $\partial_\varphi f =\alpha-\beta$. This implies that $f$ depends only on $\varphi$, which yields the desired contradiction. Indeed, by Stokes' Theorem we have
	\begin{equation}
	    0 = \int_{\mathbb S^1} \partial_\varphi f \,d\varphi = 2\pi(\alpha-\beta)\neq 0.
	\end{equation}
	
    To complete the proof, it suffices to observe that, if $\alpha = n +\tilde \alpha$ with $n\in\mathbb Z$, $A_\alpha$ is gauge equivalent to $A_{\tilde \alpha}$ via the gauge transformation $u \mapsto v:=e^{-in\varphi}u$. In particular, this implies that $A_n$ is gauge equivalent to the zero potential $A_0$, and thus that any Aharonov--Bohm potential is gauge equivalent to $A_\alpha$ for some $\alpha\in \mathbb R\setminus\mathbb Z$, i.e., as in \eqref{eq:AB}.
\end{proof}

We observe that, in the Euclidean case, the Aharonov--Bohm Laplacian admits various self-adjoint realizations \cite{AdamiTeta,dabrowski1998aharonov, pankrashkin2011spectral}. 
In the following we are only concerned
with the Friedrichs extension
of the magnetic sub-Laplacian initially defined on 
$C_c^\infty(\bH^1\setminus\cZ)$.

We are now ready to prove Theorem~\ref{thm:lw2-heis}, 
showing that \eqref{eq:Hardy-center} does indeed hold 
for the Aharonov--Bohm magnetic potentials
with an explicit constant~$c$ excluding the case
of integer flux quanta. 
The result is thus reminiscent of the celebrated 
magnetic Hardy inequality~\eqref{Hardy.LW} 
due to Laptev and Weidl~\cite{LW99} in the Euclidean case.
However, in the present situation,
the magnetic-free Heisenberg Laplacian is already subcritical.
The novelty here is that the singularity of the Hardy weight 
on the right-hand side of~\eqref{eq:Hardy-center}
is not permitted without the magnetic field.

\begin{remark}\label{rmk:non-orthogonal}
  In spite of the apparent similarity between the inequalities 
  \eqref{Hardy.LW} and \eqref{eq:Hardy.Delta},
  also their proofs are different. Indeed, in the Euclidean setting the
  thesis descends from the positivity of only the angular part of the
  magnetic Laplacian, while in the Heisenberg setting this alone is
  not sufficient.
    In detail, in the Euclidean case one has,
    for all {$\psi \in C_c^\infty(\R^2 \setminus \{0\})$},
  \begin{equation*}
    \begin{split}
    \langle -\Delta_{A_\alpha} \psi , \psi\rangle_{L^2(\R^2)}
    &=
      \int_{0}^{+\infty} \int_{\bS^1}
      \left[\abs{\partial_r \psi}^2 
      + \frac{\abs{( \partial_{\varphi}
          + i \alpha)\psi}^2}{r^2}\right]
    \, r dr d\varphi
    \\	
    &\geq \operatorname{dist}(\alpha,\bZ)^2 \int_{\R^2} \frac{\abs{\psi}^2}{\varrho^2}  \,dx,
      \end{split}
  \end{equation*}
since $\abs{\lambda} \geq \operatorname{dist}(\alpha,\bZ)$ for $\lambda \in
\sigma(-i\partial_{\varphi} +  \alpha)$
and the radial part is simply neglected. 
Proceeding analogously in the Heisenberg setting, one has, 
for all {$\psi \in C_c^\infty(\mathbb{H}^1 \setminus \mathcal{Z})$},
  \begin{equation*}
    \begin{split}
    \langle -\Delta_\alpha \psi , \psi\rangle_{L^2(\mathbb{H}^1)}
    & =
      \int_{0}^{+\infty} \int_{\bS^1 \times \R}
      \left[\abs{\partial_r \psi}^2 
      + \frac{\abs{( \partial_{\varphi}
    + \frac{r^2}{2}\partial_z + i \alpha)\psi}}{r^2}\right]
    \, r dr d\varphi dz
    \\
      & \geq \int_{\bH^1} \nu_\alpha(r)
      \frac{\abs{\psi}^2}{r^2}  \,dq,
      \end{split}
  \end{equation*}
  where for all $r>0$ we let
  \begin{equation*}
    \begin{split}
      \nu_\alpha (r)  
      :=\ & \inf_{0 \neq \theta \in W^1(\bS^1 \times \R)}
      \frac{\int_{\bS^1 \times \R} \big\vert \big( \partial_{\varphi}
        + \frac{r^2}{2}\partial_z + i \alpha\big)
        \theta(\varphi,z) \big\vert^2 \, d\varphi dz}%
      {\int_{\bS^1 \times \R} |\theta(\varphi,z)|^2 \, d\varphi dz },
      \\
      =\ &
      \inf_{0 \neq \phi \in W^1(\bS^1 \times \R)}
      \frac{\int_{\bS^1 \times \R} \big\vert \big( \partial_{\varphi}
        + \partial_\gamma + i \alpha\big)
        \phi(\varphi,\gamma) \big\vert^2  \, d\varphi d\gamma}%
    {\int_{\bS^1 \times \R} |\phi(\varphi, \gamma)|^2 \,
      d\varphi d\gamma },
  \end{split}
\end{equation*}
having performed the change of variables $z = \gamma r^2 / 2$ in
the last equality. We can not conclude 
as in the Euclidean case above, 
since $\sigma(-i\partial_{\varphi}  -i\partial_\gamma + 
\alpha) = \R$ implies that $\nu_\alpha(r) = 0$ for all $r>0$.
\end{remark}

\begin{proof}[Proof of Theorem~\ref{thm:lw2-heis}]
    Via a Fourier transform in the $\varphi$ variable, we have the decomposition
    \begin{equation}\label{eq:decomp}
        {-\Delta_{A_\alpha}} = \bigoplus_{m\in\bZ} L_m, 
    \end{equation}
    where $L_m$ denotes the operator on $L^2(\bR_+\times \bR, r dr dz)$ associated with the quadratic form {defined for $u\in C^\infty_c(\bR_+\times \bR)$ as}
    \begin{equation*}
        Q_m(u) = \int_{\bR_+\times \bR} 
        \left(|Ru|^2 + \big|\hat\Phi_{\alpha-m} u\big|^2\right)
        \, r dr dz, \qquad \hat \Phi_{\alpha-m} u := -i\frac{r}{2}\partial_z u + \frac{\alpha-m}r u.
    \end{equation*}
    Thus, to prove inequality \eqref{eq:Hardy.Delta} it suffices to show that 
    \begin{equation}\label{eq:Lm}
        {L_m} \ge \frac{\max\{-1, \min\{1,\alpha-m\}\}^2}{r^2}.
    \end{equation}
    We prove it for $m=0$. The general case easily follows.

    Let $\aa := \max\{-1,\min\{1,\alpha\}\}$, 
    fix $u\in C^\infty_c(\bR_+\times \bR)$, 
    and define $Pu := Ru + \varepsilon\hat \Phi_{\alpha-\aa}u $, where $\varepsilon = \pm 1$ will be fixed later depending on $u$. Direct computations yield
    \begin{equation*}
        0 \le  |Pu|^2 = |Ru|^2 + |\hat\Phi_{\alpha}u|^2 -\frac{\aa^2}{r^2}|u|^2 +2\Re \left( \left( \varepsilon Ru - \frac{\aa}{r}u \right)  \overline{ \hat\Phi_{\alpha-\aa} u }  \right)  
        .    
    \end{equation*}
    Integrating the above with respect to
    the measure $r drdz$, we reduce the proof of \eqref{eq:Lm} to the non-positivity of the integral of the last term above. By developing the product, and denoting the scalar product in $L^2(\bR_+\times \bR, rdrdz)$ with $\left\langle\cdot,\cdot \right\rangle$, and the associated norm by $\|\cdot\|$, this is equivalent to 
    \begin{equation}\label{eq:boh}
        \varepsilon\Re \left( \frac12\left\langle Ru, -ir\partial_z u \right\rangle +\left\langle Ru, \frac{\alpha-\aa}{r} u \right\rangle  \right) - \frac{\aa}{2}\left\langle u, -i\partial_z u \right\rangle - \aa(\alpha-\aa)\left\| \frac{u}{r} \right\|^2 \le 0.
    \end{equation}
    Here, we used the fact that $-i\partial_z$ is a symmetric operator to guarantee that $\left\langle u, -i\partial_z u \right\rangle$ is real. 
    Integrating by parts and using the fact that $\alpha$ is real, 
    we obtain
    \begin{equation}
       \Re \left\langle Ru, -ir\partial_z u \right\rangle = \left\langle u, -i\partial_z u \right\rangle
       \qquad\text{and}\qquad
        \Re \left\langle Ru, \frac{\alpha-\aa}{r} u \right\rangle = 0.
    \end{equation}
    Finally, since $\aa(\alpha-\aa)\le 0$, we have reduced \eqref{eq:boh} to
    \begin{equation}
        \frac{1}{2}(\varepsilon -\aa)\left\langle u, -i\partial_zu \right\rangle \le 0.
    \end{equation}
    Since $\aa\in [-1,1]$, one easily checks that choosing $\varepsilon := -\operatorname{sgn}\left\langle u, -i\partial_z u \right\rangle$ yields the desired inequality, proving the first part of the statement.
     
Let us now turn to an argument for the sharpness of \eqref{eq:Hardy.Delta}.
Let $\mm\in \bZ$ be such that $\operatorname{dist}(\alpha,\bZ) = |\alpha-\mm|$. By  \eqref{eq:decomp}, it suffices to construct a sequence $(\psi_n)_{n\in \bN}\subset \dom(L_{\mm})$, such that
\begin{equation}\label{eq:hardy-sharp-barm}
    \lim_{n} \frac{Q_{\mm}(\psi_n)}{\|\psi_n / r\|^2} \leq |\alpha-\mm|^2.
\end{equation}

Inspired by  \cite[proof of Corol.~VIII.6.4]{edmunds2018spectral} and
\cite{Cazacu2016},
let~$\xi:[0,1]\to[0,1]$ be a smooth function such that $\xi=0$
in a right neighborhood of~$0$ and $\xi=1$ in a left neighborhood of~$1$.
For any natural number $n \geq 2$,
we define the following smooth cut-off function $\eta_n:(0,\infty)\to[0,1]$ by
\begin{equation}
\eta_n({r}) :=
\begin{cases}
  0
  & \mbox{if}\quad {r}<1/n^2 \,,
  \\
  \xi\big(\log_n(n^2 {r})\big)
  & \mbox{if}\quad {r}\in[1/n^2,1/n] \,,
  \\
  1
  & \mbox{if}\quad {r} \in(1/n,n) \,,
  \\
  \xi\big(\log_n(n^2/{r})\big)
  & \mbox{if}\quad {r}\in[n,n^2] \,,
  \\
  0
  & \mbox{if}\quad {r} >n^2 \,.
\end{cases}
\end{equation}
The function $\eta_n$ is a convenient smooth approximation in $W^1(\bR_+,{r} d{r})$ of the constant function~$1$:
 it is immediate to see that $\eta_n \to 1$ almost everywhere
as $n \to \infty$ and that, for any $n\geq 2$, it holds
\begin{equation}\label{eq:eta.convergence}
  2 \log n
  \leq  \int_0^{+\infty} \frac{\abs{\eta_n({r})}^2}{{r}}\,d{r}
  \quad\text{and}\quad
    \int_0^{+\infty} \abs{\eta_n'({r})}^2 \, {r} d{r}
  \leq \frac{2\norm{\xi'}_\infty^2}{\log n}.
\end{equation}

For any $n\geq 2$, we now define $\chi_n : [0,+\infty) \to [0,1]$ as follows:
\begin{equation}
  \chi_n (s) :=
  \begin{cases}
  1
  & \mbox{if}\quad s < n^4 \,,
  \\
  \xi\left(2 - \frac{s}{n^4} \right)
  & \mbox{if}\quad s\in[n^4,2n^4] \,,
  \\
  0
  & \mbox{if}\quad s >2 n^4 \,.
\end{cases}
\end{equation}
Straightforward computations yield that
\begin{equation}\label{eq:chi.Hardy}
 2 n^4 \leq \int_{-\infty}^{+\infty} \abs{\chi_n(\abs{s})}^2 \, ds
  \quad \text{and}\quad
  \int_{-\infty}^{+\infty} \abs{\chi_n'(\abs{s})}^2 \, ds
  \leq \frac{2\norm{\xi'}_{\infty}^2}{n^4}.
\end{equation}
Finally, we set
\begin{equation*}
    \psi_n(r,z) = \eta_n(r) \chi_n(|z|), \qquad \text{ for all }(r,z) \in \bR_+\times \bR.
\end{equation*}

We estimate the Hardy--Rayleigh quotient 
${Q_{\mm}(\psi_n)}/{\|\psi_n / r\|^2}$. 
We have
\begin{equation}\label{eq:Hardy.1}
    \begin{split}
        Q_{\mm}(\psi_n) 
        &= \int_\bR \int_0^{\infty} \left[ \abs{\partial_{r} \psi_n}^2 + \left|\left(-i \frac{r}2 \partial_z + \frac{ \alpha-\mm}r\right) \psi_n\right|^2 \right] \,  {r} d{r} dz\\
        &= \int_\bR \int_0^{\infty} \left[ \abs{\eta'_n(r)\chi_n(|z|)}^2 + \left|\left(-i \frac{r}2 \partial_z + \frac{\alpha-\mm}r\right) \eta_n(r)\chi_n(|z|)\right|^2 \right] \,  {r} d{r} dz\\
        & = I_1+I_2+I_3,
    \end{split}
\end{equation}
where we have defined 
\begin{align*}
  I_1 &:=  \int_\bR  \int_0^{\infty} \abs{\eta_n'({{r}})\chi_n(\abs{z})}^2    \,  {r} d{r} dz,\\
  I_2 &:=  \int_\bR  \int_0^{\infty} \frac{\abs{(\alpha-\mm)\eta_n({{r}})\chi_n(\abs{z})}^2}{{r}^2} \,  {r} d{r} dz,\\
  I_3 &:=  \int_\bR  \int_0^{\infty}
          \abs{\eta_n({{r}})}^2 \left|\frac{r}{2}\partial_z\chi_n(\abs{z})\right|^2\,
          r d{r} dz.
\end{align*}
and in the last equality we used the fact that $\psi_n$ is real.
 
Thanks to \eqref{eq:eta.convergence}, and the explicit expression of $\psi_n$, we have
\begin{equation}
\frac{I_1}{\|\psi_n /r\|^2}
  = \frac{\int_0^{\infty}
    \abs{\eta_n'(r)}^2
    \,  {r} d{r} }
    {\int_0^{\infty}
    {\abs{\eta_n({{r}})}^2}{{r^{-1}}}
    \,  d{r}}
    \le \frac{\|\xi'\|_\infty^2}{\log^2 n} \xrightarrow[n\to\infty]{} 0.
\end{equation}
On the other hand, a direct computation immediately shows that
\begin{equation}\label{eq:Hardy.3}
\frac{I_2}{\|\psi_n/r\|^2}  \xrightarrow[n\to\infty]{} |\alpha-\mm|^2 .
\end{equation}
Finally, since on the support of $\eta_n$ it holds ${r} \leq n^2$,  thanks to \eqref{eq:eta.convergence} and \eqref{eq:chi.Hardy} we estimate
\begin{equation}\label{eq:Hardy.5}
  \frac{I_3}{\|\psi_n/r\|^2}
  \leq
  \frac{\|\xi'\|_\infty^2}{32 \log n} \xrightarrow[n\to\infty]{} 0.
\end{equation}
The desired estimate thus \eqref{eq:hardy-sharp-barm} follows by  putting together \eqref{eq:Hardy.1}--\eqref{eq:Hardy.5}. 
\end{proof}

\subsection{Hardy-like inequalities for cylindrically supported magnetic fields}\label{ss:cyl}
In this subsection, we are concerned with inequalities of the form 
\begin{equation}\label{eq:Hardy-center.mild}
  -\Delta_A\ge \frac{c}{1+r^2}
  \qquad \mbox{in} \qquad L^2(\mathbb{H}^1)
  \,,
\end{equation}
where $c > 0$, $r(x) := \sqrt{x^2+y^2}$
is the distance to the $z$-axis
and~$A$ is a vector potential
for any mild magnetic field.
   
    Laptev and Weidl showed in~\cite{LW99} 
    (see also \cite[Thm.~3.2]{Cazacu2016})
    that~\eqref{eq:Hardy-center.mild} holds in the two-dimensional 
    Euclidean case with $c=c(B)>0$ as soon as the magnetic field vanishes at infinity and the total magnetic flux~\eqref{flux} is not an integer. 
	
In the following we establish Theorem~\ref{thm:lw1-heis},
which contains an estimate of type~\eqref{eq:Hardy-center.mild}.
In fact, we prove it as a consequence of a stronger estimate
contained in~Theorem~\ref{thm:hardy.logarithm},
where the Hardy weight is allowed to be singular on~$\mathcal{Z}$.
What is more, our strategy allows us to improve~\eqref{eq:Hardy-center.mild}
also in the Euclidean setting, 
see Remark \ref{rmk:euclidean.improvement} below.

Recall that the primitive $\b$ of an horizontal magnetic field $B = b_1 dr\wedge \omega + b_2 rd\varphi\wedge \omega$, such that $\supp B\subset \{r\leq r_0\}$ for some $r_0>0$, is defined in \eqref{primitive}. In cylindrical coordinates, this reads
\begin{equation}\label{eq:prim}
        {\b(r,\varphi, z) 
        =- \int_r^{+\infty} b_1(t, \varphi, z)\, dt}.
\end{equation}

\begin{remark}
Since $R\b = b_1$, by Proposition~\ref{prop:primitive} a horizontal magnetic field supported on a cylinder is completely determined by its primitive.
\end{remark}

In the next proposition we show the relation between the support of a horizontal magnetic field and of its primitive. 
\begin{proposition}\label{prop:suppcomp}
  The horizontal magnetic field $B = b_1 dr\wedge \omega + b_2 rd\varphi\wedge \omega$ is supported in the cylinder $\{r\leq r_0\}$ if and only if its primitive $\b$ is supported in the cylinder $\{r\leq r_0\}$ and it satisfies
  \begin{equation}\label{eq:suppcomp}
  \int_0^{r_0}Z\b(t,\varphi,z)\;tdt=0,\quad \forall \varphi\in\bS^1, z\in\bR.
  \end{equation}
  \end{proposition}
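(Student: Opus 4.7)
The plan is to derive a single explicit identity expressing $b_2$ directly in terms of the primitive~$\b$, from which both implications become transparent. The two inputs are the relation $R\b = b_1$, immediate from \eqref{eq:prim}, and the closure-derived formula \eqref{eq:b2}, namely $b_2(r,\varphi,z) = r^{-1}\int_0^r \Phi b_1(t,\varphi,z)\,t\,dt$.

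Substituting $b_1 = R\b$ and expanding $\Phi = t^{-1}\partial_\varphi + (t/2)Z$ splits the integral into two pieces. The $\partial_\varphi$-part telescopes to $\partial_\varphi\b(r,\varphi,z)$ (the $t=0$ boundary term vanishes because smoothness of $\b$ across the $z$-axis forces $\varphi$-independence there), while one integration by parts on the $Z$-part produces $(r/2)Z\b(r,\varphi,z) - r^{-1}\int_0^r t\,Z\b(t,\varphi,z)\,dt$. Collecting these, I expect the key identity
\begin{equation*}
  b_2(r,\varphi,z) = \Phi\b(r,\varphi,z) - \frac{1}{r}\int_0^r t\,Z\b(t,\varphi,z)\,dt.
\end{equation*}

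With this identity in hand, both directions read off by inspection. For the forward implication, the very definition \eqref{eq:prim} shows that $b_1$ vanishing for $r>r_0$ forces $\b$ to vanish there as well, so $\supp\b\subset\{r\leq r_0\}$; then evaluating the identity at $r>r_0$ annihilates the $\Phi\b$ term, and the constraint $b_2=0$ collapses to \eqref{eq:suppcomp}, since $Z\b$ is also supported in $\{r\leq r_0\}$. Conversely, assuming $\supp\b\subset\{r\leq r_0\}$ and \eqref{eq:suppcomp}, the relation $b_1=R\b$ immediately confines $b_1$ to the cylinder, and evaluating the identity at $r>r_0$ both terms on the right vanish (the first because $\b\equiv 0$ there, the second by \eqref{eq:suppcomp}), so $b_2=0$ outside the cylinder as well.

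The only mildly delicate point is justifying the vanishing boundary term $\partial_\varphi\b(0,\varphi,z)=0$ during the telescoping step; this is not a computational obstacle but does rely on the smoothness of $\b$ across the degenerate coordinate axis $\{r=0\}$, i.e., on the fact that $\b$ is the pullback via cylindrical coordinates of a smooth function on $\mathbb{H}^1$.
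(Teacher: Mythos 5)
Your proposal follows the same route as the paper's own proof: both reduce the statement to the identity $b_2=\Phi\b-\tfrac1r\int_0^r tZ\b\,t\,dt$ (the paper's \eqref{eq:b2supp}) and then read off the two implications by evaluating at $r>r_0$. The only difference is that you obtain the identity by expanding $\Phi$ directly inside \eqref{eq:b2}, whereas the paper first commutes $\Phi$ past $R$ using $[\Phi,R]=\tfrac1r\Phi-Z$ and then integrates by parts; the two computations are equivalent and produce the same boundary term at $t=0$.

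That boundary term is a genuine gap, and you have located it exactly but then dismissed it with a justification that does not hold. You claim $\lim_{t\to0^+}\partial_\varphi\b(t,\varphi,z)=0$ because $\b$ is ``the pullback of a smooth function on $\bH^1$''. But $\b$ is \emph{defined} by \eqref{eq:prim} as $-\int_r^{+\infty}b_1(t,\varphi,z)\,dt$, and its limit as $r\to0^+$ is $-\int_0^{+\infty}b_1(t,\varphi,z)\,dt$, which in general depends on $\varphi$: nothing forces $\b$ to extend even continuously to the axis, so the asserted smoothness is not available. Concretely, take $b_1(r,\varphi,z)=\chi(r)\cos\varphi$ with $\chi\in C^\infty_c((0,r_0))$ and $K(r_0):=\int_0^{r_0}\chi\neq0$. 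This is the $dr\wedge\omega$-component of the smooth closed horizontal $2$-form $B=dF\wedge\omega$, $F(x,y,z)=K(r)\cos\varphi$, for which closedness (equivalently \eqref{eq:b2}) forces $b_2=-\sin\varphi\,K(r)/r$, hence $b_2=-\sin\varphi\,K(r_0)/r\neq0$ for $r>r_0$. Yet here $\b=-\cos\varphi\int_r^{r_0}\chi$ is supported in $\{r\le r_0\}$ and $Z\b\equiv0$, so \eqref{eq:suppcomp} holds; the ``if'' direction fails, because the true identity carries the extra term $-\tfrac1r\lim_{t\to0^+}\partial_\varphi\b(t,\varphi,z)$, and the correct support condition is $\partial_\varphi\b(0^+,\varphi,z)+\int_0^{r_0}tZ\b\,t\,dt=0$ rather than \eqref{eq:suppcomp}. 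You should be aware that the paper's proof silently discards the same boundary term in its integration by parts, so your write-up reproduces the published argument faithfully, gap included; but as justified, the step would fail, and closing it requires either the corrected condition above or an additional hypothesis guaranteeing that $\int_0^{+\infty}b_1(t,\varphi,z)\,dt$ is independent of $\varphi$.
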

  \begin{proof}
  By \eqref{eq:prim}, $\supp b_1\subset\{r\leq r_0\}$ if and only if $\supp \b\subset\{r\leq r_0\}$. 
 We claim that, for $r>0,\varphi\in\bS^1,z\in\R$, we have 
 \begin{equation}\label{eq:b2supp}
     b_2(r,\varphi,z)=\Phi\b(r,\varphi,z)-\frac{1}{r}\int_0^rZ\b(t,\varphi,z)\;tdt.
 \end{equation}
Combining the last two facts, the statement follows since $\supp B\subset\{r\leq r_0\}$ if and only if
  \begin{equation*}
  \supp\b(r,\varphi,z)\subset\{r\leq r_0\}
  \quad\text{and}\quad\int_0^{r_0} Z\b(t,\varphi,z)\,tdt=0.
  \end{equation*}  
  
  We prove \eqref{eq:b2supp}. Since $R\b=b_1$, formula \eqref{eq:b2} reads,
  for any $r>0,\varphi\in\bS^1,z\in\R$,
  \begin{equation}\label{eq:b21}
    rb_2(r,\varphi,z) = \int_0^r \Phi R\b(t,\varphi,z)\,tdt.
  \end{equation}
  Moreover, there holds 
  \begin{equation*}
      [\Phi,R]=-\left(-\frac{1}{r^2}\partial_\varphi+\frac{1}{2}\partial_z\right)=\frac{1}{r}\Phi-Z,
  \end{equation*}
  so that \eqref{eq:b21} reads, for $r>0,\varphi\in\bS^1,z\in\R$,
  \begin{equation}\label{eq:b22}
    rb_2(r,\varphi,z) = \int_0^r R\Phi\b(t,\varphi,z)\,tdt-\int_0^rZ\b(t,\varphi,z)\;tdt+\int_0^r\Phi\b(t,\varphi,z)\;dt.  
  \end{equation}
  Integrating by parts the first term of the previous equation yields the claim.
\end{proof}

\begin{rmk}\label{rmk:exa-notbdd}
Let us consider the magnetic field $B= b_1 dr\wedge \omega + b_2 rd\varphi\wedge \omega$ with $b_1(r,\varphi,z)=\chi(r)\beta(\varphi, z)$, where $\chi\in C^\infty(\mathbb R)$ be such that
$\supp\chi\subset [0,r_0]$ and $\beta\in C^\infty(\mathbb S^1\times \mathbb R)$.
Then, $B$ is supported in the cylinder $\{r\le r_0\}$ if and only if:
\begin{equation}\label{eq:cylsupp-conditions}
     Z\beta\equiv 0 \quad\text{or}\quad\int_0^{r_0} \chi(t) t^2 dt = 0.
\end{equation}
Indeed, by definition of $\chi$, the primitive $\b$ is supported on $[0,r_0]$ and it holds
\begin{equation*}
    \int_0^{r_0} Z\b(t,\varphi,z) \, tdt = - Z\beta(\varphi,z)\int_0^{r_0}\int_t^{r_0} \chi(s)\,ds\,tdt = Z\beta(\varphi,z)  \int_0^{r_0} \chi(t)\,\frac{t^2}2 dt =0.
\end{equation*}
\end{rmk}

    In the following, we show that far away from the $z$-axis, magnetic fields supported on cylinders are given by Aharonov--Bohm-like vector potentials. This is reminiscent of what happens in the Euclidean case 
    (see, e.g., \cite[Eq.~16]{Krejcirik2013}).	

\begin{proposition}\label{prop:ab-cylinder}
    Let $B = b_1 dr\wedge \omega + b_2 rd\varphi\wedge \omega$ be a horizontal magnetic field whose support is contained in a cylinder $\{r\leq r_0\}$ for some $r_0>0$. 
    Define the map
    \begin{equation}\label{eq:flux}
        \flux{B}(z) := \frac{1}{2\pi} \int_{\R^2} \b(\xi, z)\, d\xi , \qquad z\in \mathbb R.
    \end{equation}
    Then, $\flux{B}(z)$ is independent of $z$. Moreover, letting $\flux{B}=\flux{B}(0)\in \mathbb R$, for an appropriate choice of vector potential $A$, it holds
    \begin{equation}
        A(r,\varphi,z) = \flux{B}\,d\varphi, \qquad \forall r>r_0.
    \end{equation}
\end{proposition}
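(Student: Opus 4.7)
The plan splits into two parts. For the independence of $\flux{B}(z)$ on $z$, I differentiate under the integral sign. Switching to cylindrical coordinates and using that $\supp\b\subset\{r\le r_0\}$ (which follows from $\supp B\subset\{r\le r_0\}$ via Proposition~\ref{prop:suppcomp}), the derivative becomes
\begin{equation*}
\partial_z\flux{B}(z) = \frac{1}{2\pi}\int_0^{2\pi}\int_0^{r_0} Z\b(r,\varphi,z)\,r\,dr\,d\varphi,
\end{equation*}
and the inner integral in $r$ vanishes by identity~\eqref{eq:suppcomp} of Proposition~\ref{prop:suppcomp}, giving $\partial_z\flux{B}\equiv 0$.

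For the second part, I construct an explicit vector potential of the form $A = \alpha(r,\varphi,z)\,d\varphi$ with
\begin{equation*}
\alpha(r,\varphi,z) := \begin{cases}\flux{B} - \int_r^{r_0} s\,\b(s,\varphi,z)\,ds & \text{if } 0<r\le r_0,\\ \flux{B} & \text{if } r \ge r_0.\end{cases}
\end{equation*}
Since $B$ is smooth with support in $\{r\le r_0\}$, the primitive $\b$ vanishes to infinite order at $r=r_0$, so $\alpha$ is smooth on $\bH^1\setminus\cZ$; moreover $\alpha \equiv \flux{B}$ for $r>r_0$ by definition, so $A = \flux{B}\,d\varphi$ has the desired Aharonov--Bohm form outside the cylinder.

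It remains to verify $DA = B$. I apply Proposition~\ref{prop:DA} with $\alpha_1 = 0$ and $\alpha_2 = \alpha/r$; this yields $\gamma = \b$ for $r\le r_0$ and $\gamma \equiv 0$ for $r\ge r_0$. The first component $R\gamma = R\b = b_1$ is immediate from the definition of the primitive. The second component is
\begin{equation*}
\Phi\gamma - Z(\alpha/r) = \Phi\b + \frac{1}{r}\int_r^{r_0} s\,Z\b(s,\varphi,z)\,ds.
\end{equation*}
Using~\eqref{eq:suppcomp} to rewrite the integral as $-\int_0^r s\,Z\b\,ds$, I expect this to equal $b_2$ via the closure relation $\frac{1}{r}R(rb_2) = \Phi b_1$ integrated from $0$ to $r$, combined with the commutator identity $[\Phi,R] = \frac{1}{r}\Phi - Z$ already invoked in the proof of Proposition~\ref{prop:suppcomp}.

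The main obstacle is the matching of $b_2$ in the second component of $DA$: it depends crucially on the support condition~\eqref{eq:suppcomp}, which is precisely the extra constraint distinguishing magnetic fields with $\supp B\subset\{r\le r_0\}$ from those with merely $\supp\b\subset\{r\le r_0\}$. This is also why I avoid starting from the Poincaré gauge of Proposition~\ref{prop:gauge}: a direct gauge transformation of that potential into the Aharonov--Bohm form outside the cylinder leads to compatibility conditions on $\partial_\varphi\b(0,\varphi,z)$ that fail in general, whereas the piecewise construction above builds in the support condition from the outset.
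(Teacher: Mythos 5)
Your proof is correct, but it follows a genuinely different route from the paper's. For the $z$-independence of $\flux{B}$, the paper does not differentiate under the integral sign: it first gauge-transforms the Poincar\'e potential $A_0$ of Proposition~\ref{prop:gauge} by $f(\varphi,z)=-\int_0^z\b_\infty(\varphi,z')\,dz'$ to get $A=a(r,\varphi,z)\,d\varphi$, shows that $a$ is independent of $r$ and (using $DA=B=0$ outside the cylinder) of $z$ for $r>r_0$, identifies $\int_{\bS^1}a\,d\varphi=2\pi\flux{B}(z)$ — which yields the $z$-independence as a byproduct — and finally flattens $a(\varphi)$ to the constant $\flux{B}$ by one more gauge transformation $\psi(\varphi)=-\int_0^\varphi a+\varphi\flux{B}$, single-valued precisely because of that angular average. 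Your direct differentiation combined with \eqref{eq:suppcomp} is cleaner for the first claim, and your closed-form potential $\alpha=\flux{B}-\int_r^{r_0}s\,\b\,ds$ avoids the chain of gauge transformations entirely, at the price of having to verify $DA=B$ by hand; the paper's route gets $DA=B$ for free from gauge equivalence with $A_0$. Two remarks. First, the step you leave as ``I expect this to equal $b_2$'' requires no further work: after using \eqref{eq:suppcomp} to write $Z\alpha=\int_0^r sZ\b\,ds$, the second component of $DA$ is exactly $\Phi\b-\frac{1}{r}\int_0^r sZ\b\,ds$, which is identity \eqref{eq:b2supp} already established in Proposition~\ref{prop:suppcomp}; just cite it. Second, your closing claim that a gauge transformation of the Poincar\'e potential ``leads to compatibility conditions that fail in general'' is not accurate — the paper carries out precisely that strategy successfully, using the two auxiliary gauge functions $f$ and $\psi$ above. (Both constructions produce a potential that is in general singular on $\cZ$, which is harmless here since $C^\infty_c(\bH^1\setminus\cZ)$ is a form core by Proposition~\ref{prop:core}.)
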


\begin{proof}
    By the assumption on the support of $B$, it holds $\b_\infty(\varphi,z) = \int_0^{+\infty} b_1(s,\varphi,z)\,ds < +\infty$ for all $(\varphi,z)\in \bS^1\times \bR$. Thus, 
    \begin{equation*}
        \b(t,\varphi,z) = \int_0^t b_1(s,\varphi,z)\,ds - \b_\infty(\varphi,z),\quad t>0.
    \end{equation*}
    Let $f(\varphi,z) = -\int_0^z \b_\infty (\varphi, z')\,dz'$ and
    consider the vector potential $A = A_0+d_Hf$, where $A_0$ is given in the Poincaré gauge as in Proposition~\ref{prop:gauge}. That is, since for $(\varphi,z)\in\bS^1\times\R$ there holds
    \begin{equation*}
    d_Hf(\varphi,z)=-\Phi\left[ \int_0^z \mathfrak b_\infty(\varphi,z')\, dz' \right]rd\varphi=-\left(\int_0^z\partial_\varphi b_\infty(\varphi,z')\;dz'+\frac{r^2}{2}b_\infty(\varphi,z)\right)rd\varphi,
    \end{equation*} 
    we have $A = a(r,\varphi,z)d\varphi$, where
    \begin{equation*}
         a(r,\varphi,z) = \int_0^r \mathfrak b(t,\varphi,z)\,tdt + \frac{r^2}2 \b_\infty(\varphi,z) - \left[\int_0^z\partial_\varphi b_\infty(\varphi,z')\;dz'+\frac{r^2}{2}b_\infty(\varphi,z)\right].
    \end{equation*}
In particular, by construction, $DA = DA_0 = B$, and, using the fact that $\b(t,\varphi,z)=0$ if $t>r_0$, it holds
    \begin{equation}
        \label{eq:alpha-gauge}
        a(r,\varphi,z) = \int_0^{r_0} \b(t,\varphi,z)\,tdt - \int_0^z \partial_\varphi\mathfrak b_\infty(\varphi,z')\, dz', \qquad \forall r>r_0.
    \end{equation}

    Observe that, by \eqref{eq:alpha-gauge}, the function $a(r,\varphi,z)$ is independent of $r>r_0$. Thus, using the fact that $\supp b_1\cup\supp b_2\subset\{r\leq r_0\}$, by Proposition~\ref{prop:DA},  on $\{r>r_0\}$ it holds
    \begin{equation*}
        0 = B = DA = -Za \, d\varphi\wedge\omega 
        \implies
        Za = 0.
    \end{equation*}

    Summing up, for $r>r_0$ we have $A = a(\varphi)\,d\varphi$.
    To prove that $\flux{B}$ is constant, we observe that
    \begin{equation*}
        \begin{split}
            \int_{\bS^1} a(\varphi)\,d\varphi
            & = \int_0^{r_0} \int_{\bS^1} \b(t,\varphi,z)\, d\varphi\,tdt - \int_0^z \int_{\bS^1}\partial_\varphi \b_\infty(\varphi,z)\,d\varphi\,dz\\
            &= \int_0^{+\infty} \int_{\bS^1} \b(t,\varphi,z)\, d\varphi\,tdt \\
            &= 2\pi \flux{B}.
        \end{split}
    \end{equation*}
    Finally, the statement follows  replacing  $A$ with $A + d\psi$, where
    \begin{equation*}
        \psi(r,\varphi,z) = -\int_0^\varphi a(\varphi') \,d\varphi' + \varphi \mathfrak F_B.
        \qedhere
    \end{equation*}
\end{proof}
\begin{remark}\label{rmk:compact-supp}
 If $B$ is compactly supported, say $\supp B \subset \{r\le r_0, |z|\le z_0\}$, we have $\flux{B} =0$. Moreover, using the Poincaré gauge of Proposition~\ref{prop:gauge} it is immediate to observe that in this case also the vector potential $A$ can be chosen to satisfy $\supp A \subset \{|z|\le z_0\}$. 
\end{remark}

\begin{rmk}\label{rem:FB-non-zero}
  Let $B$ be as in Example~\ref{rmk:exa-notbdd}. Then $\flux{B}\neq 0$ if and only if  $\beta$ is independent of $z$.
  In fact, integrating by parts the definition of $\flux{B}$ yields
  \begin{equation*}
  \begin{split}
      \flux{B}
      =-\frac{1}{2\pi}\int_{\bS^1}\int_0^{r_0}b_1(t,\varphi,z)\frac{t^2}{2}\,dtd\varphi
      &=-\frac{1}{2\pi}\int_{\bS^1}\beta(\varphi,z)\;d\varphi\int_0^{r_0}\chi(t)\frac{t^2}{2}\,dtd\varphi.
  \end{split}
  \end{equation*}
  By \eqref{eq:cylsupp-conditions} we then deduce that $B$ is supported in a cylinder if and only if $Z\beta\equiv 0$.
\end{rmk}

\begin{remark}\label{remark:notbdd} The vector potential $A$ of a horizontal magnetic field supported in a cylinder $\{r\leq r_0\}$ is not necessarily bounded on $\bH^1$. In fact, although Proposition~\ref{prop:ab-cylinder} implies that $A$ is bounded outside the cylinder $\{r\leq r_0\}$, by Example~\ref{rmk:exa-notbdd} this does not need to be true inside. 
\end{remark}

We can state now the main result of this section.
This is a more general version of Theorem~\ref{thm:lw1-heis}
presented above.
 
	\begin{theorem}\label{thm:hardy.logarithm}
        Let $B$ be a magnetic field on~$\mathbb{H}^1$
        of the form~\eqref{field.class}  
        and assume that its support is contained
        in the cylinder
        $\{|\xi| \leq r_0\}$ for some $r_0>0$
        and $r_1 > r_0$. 
        If $\flux{B}\notin\mathbb{Z}$,
        there exists a constant~$c(B,r_1)$ 
        dependent on~$B$ and~$r_1$
        such that
        \begin{equation}\label{eq:hardy.logarithm}
          -\Delta_A 
          \geq \frac{c(B,r_1)}{r^2 \left(1+\log_-^2 \frac{r}{r_1}\right)}
          \qquad \mbox{in} \qquad 
          L^2(\mathbb{H}^1)
          \,,
        \end{equation}
	where $\log_- s := \max(0,-\log s)$ for all $s>0$.
	\end{theorem}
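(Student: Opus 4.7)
My plan is to derive the inequality by combining the Aharonov--Bohm Hardy inequality of Theorem~\ref{thm:lw2-heis}, valid outside the support of $B$, with a classical logarithmic Hardy inequality valid inside, and gluing them via an IMS-type localization.

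First, I invoke Proposition~\ref{prop:ab-cylinder} to fix a gauge in which $A = \alpha\, d\varphi$ on $\{r > r_0\}$ with $\alpha := \flux{B} \notin \mathbb{Z}$, and I set $\delta := d(\alpha,\mathbb{Z})^2 > 0$. Then I choose smooth cutoffs $\chi_1, \chi_2 : [0,\infty) \to [0,1]$, depending only on $r$, with $\chi_1^2 + \chi_2^2 \equiv 1$, $\chi_2 \equiv 1$ on $[0, r_0]$ and $\chi_2 \equiv 0$ on $[r_1, \infty)$. A direct computation, using that $\chi_1 \nH\chi_1 + \chi_2\nH\chi_2 = \tfrac{1}{2}\nH(\chi_1^2 + \chi_2^2) = 0$, yields the IMS localization identity
\begin{equation*}
Q_A(u) = Q_A(\chi_1 u) + Q_A(\chi_2 u) - \int_{\mathbb{H}^1} \bigl( (\chi_1')^2 + (\chi_2')^2 \bigr) |u|^2\, dq,
\end{equation*}
reducing the analysis to two model regions.

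On the exterior, $\chi_1 u$ is supported in $\{r \geq r_0\} \subset \mathbb{H}^1 \setminus \mathcal{Z}$, where $A$ coincides with the Aharonov--Bohm potential $A_\alpha$; Theorem~\ref{thm:lw2-heis} directly gives $Q_A(\chi_1 u) \geq \delta \int |\chi_1 u|^2/r^2\,dq$. On the interior, $\chi_2 u$ is supported in $\{r \leq r_1\}$ and vanishes at $r = r_1$; the diamagnetic inequality, together with the pointwise bound $|\nH v|^2 \geq |\partial_r v|^2$ in cylindrical coordinates, reduces matters to the classical two-dimensional radial logarithmic Hardy inequality applied slice-by-slice in $(\varphi, z)$, yielding $Q_A(\chi_2 u) \geq \tfrac{1}{4}\int |\chi_2 u|^2/(r^2 \log_-^2(r/r_1))\,dq$. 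Summing these two bounds and exploiting $\chi_1^2 + \chi_2^2 = 1$ produces a constant $c_1 = c_1(B,r_1) > 0$ with
\begin{equation*}
Q_A(\chi_1 u) + Q_A(\chi_2 u) \geq c_1 \int_{\mathbb{H}^1} \frac{|u|^2}{r^2(1+\log_-^2(r/r_1))}\,dq.
\end{equation*}

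The main obstacle is absorbing the IMS error into the target Hardy integral. The error is supported in the annulus $\mathcal{A} := \{r_0 \leq r \leq r_1\}$, where the weight $w(r) := 1/(r^2(1+\log_-^2(r/r_1)))$ is bounded below by a positive constant depending on $r_0, r_1$; hence the error is dominated by a constant multiple of $\int_{\mathcal{A}} w(r)|u|^2\,dq$. However, the naive constant coming from $\|\chi_i'\|_\infty$ may exceed $c_1$ when $\delta$ is small or $\mathcal{A}$ is narrow, so direct absorption is not automatic. To fix this, I would employ logarithmic-scale cutoffs of the form $\chi_2(r) = \phi\bigl(\log_+(r_1/r)/\log(r_1/r_0)\bigr)$ with $\phi$ a smooth interpolation, whose derivatives satisfy $|\chi_i'(r)|^2 \leq C/(r^2 \log^2(r_1/r_0))$; the cutoff error then acquires an extra factor of $\log^{-2}(r_1/r_0)$ that precisely compensates the minimum of $w$ on $\mathcal{A}$, so absorption closes the argument for every $\delta > 0$ and yields the asserted inequality with a positive constant $c(B, r_1)$.
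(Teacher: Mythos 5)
Your strategy is genuinely different from the paper's, and its exterior half is a real simplification: since $\chi_1 u\in C^\infty_c(\bH^1\setminus\cZ)$ and, in the gauge of Proposition~\ref{prop:ab-cylinder}, $A=\flux{B}\,d\varphi$ on the support of $\chi_1 u$, Theorem~\ref{thm:lw2-heis} does give $Q_A(\chi_1u)\ge d(\flux{B},\bZ)^2\int|\chi_1u|^2/r^2$ directly. The paper instead proves the much harder Lemma~\ref{lem:laptev-heis} --- a \emph{Neumann-type} exterior inequality $Q_B(u)\ge c\int_{\{r>r_0\}}|u|^2/|\xi|^2$ valid for \emph{all} $u\in C^\infty_c(\bH^1)$, with no cutoff --- precisely so that no IMS remainder is created, and all error terms in the subsequent gluing (done by splitting $u=\psi u+(1-\psi)u$ near $r=r_1$ only) live in $\{r>r_0\}$, where that lemma controls them. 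The price of your shortcut is the IMS error, and that is where the proposal has a genuine gap.

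The absorption step does not close as described. Set $w(r)=r^{-2}(1+\log_-^2(r/r_1))^{-1}$ and note that the constant produced by your two positive terms is $c_1=\min\{d(\flux{B},\bZ)^2,\tfrac14\}\le\tfrac14$. Writing $\chi_2=\cos\theta$, $\chi_1=\sin\theta$ with $\theta(r_0)=0$, $\theta(r_1)=\pi/2$, the error density is $(\chi_1')^2+(\chi_2')^2=(\theta')^2$, and by Cauchy--Schwarz
\begin{equation*}
\frac{\pi}{2}=\int_{r_0}^{r_1}\frac{\theta'}{\sqrt w}\,\sqrt{w}\,dr
\le\Big(\sup_{[r_0,r_1]}\frac{\theta'}{\sqrt w}\Big)\int_0^{\log(r_1/r_0)}\frac{dt}{\sqrt{1+t^2}},
\end{equation*}
so $\sup_r\,(\theta')^2/w\ge(\pi/2)^2\operatorname{arcsinh}^{-2}\!\big(\log(r_1/r_0)\big)$ for \emph{every} admissible partition of unity; this exceeds $c_1$ unless $r_1/r_0$ is doubly exponentially large in $1/\sqrt{c_1}$, a condition the theorem does not grant since $r_1>r_0$ is prescribed. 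Your specific log-scale cutoff is worse still: near $r=r_0$ one finds $(\theta')^2/w\approx\|\phi'\|_\infty^2\big(1+\log^2(r_1/r_0)\big)/\log^2(r_1/r_0)\ge 1>c_1$, so the claimed ``precise compensation by $\log^{-2}(r_1/r_0)$'' is quantitatively false --- that factor only cancels the $\log^2$ in the denominator of $w$ near $r_0$, leaving a ratio of order $\|\phi'\|_\infty^2\ge1$ (and comparing the maximum of the error density to the minimum of $w$, as you suggest, gives a ratio of order $r_1^2/(r_0^2\log^2(r_1/r_0))$, which blows up both as $r_1/r_0\to1$ and as $r_1/r_0\to\infty$). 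To rescue the argument you would need both (a) a preliminary reduction to large $r_1/r_0$, using $w_{r_1}\le C(r_1,r_1')\,w_{r_1'}$ for $r_1<r_1'$, and (b) cutoffs adapted to $\sqrt w$, i.e.\ $\theta'\propto 1/(r\sqrt{1+\log^2(r_1/r)})$, which transition on a doubly logarithmic scale. Neither ingredient is in the proposal, so as written the proof is incomplete.
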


	\begin{remark}
	  Theorem~\ref{thm:lw1-heis} follows as a direct 
	  consequence of Theorem~\ref{thm:hardy.logarithm}
	  {and Proposition~\ref{prop:ab-cylinder}}.
	\end{remark}
	\begin{remark}\label{rmk:euclidean.improvement}
	  The strategy of the proof of Theorem \ref{thm:hardy.logarithm} can be easily adapted to the two-dimensional Euclidean setting. This allows to improve \eqref{eq:Hardy-center.mild} in $L^2(\R^2)$ 
	  due to~\cite{LW99} to
	  \begin{equation*}
	      -\Delta_A \geq 
	      \frac{c(B,r_1)}{r^2 \left(1 + \log_-^2 \frac{r}{r_1}\right)}
	      \qquad \mbox{in} \qquad 
              L^2(\R^2)
              \,,
	  \end{equation*}
	  where~$A$ is any vector potential in~$\R^2$ such that $B=dA$
	  with~$B$ is any smooth magnetic field compactly supported in 
	  $\{ x \in \R^2 : \abs{x} \leq r_0\}$ 
	  such that $\Phi_B \not \in \mathbb{Z}$
	   holds true, and $r_1>r_0$.
	\end{remark}
	To prove Theorem \ref{thm:hardy.logarithm} we need the following Lemma.
	    \begin{lem}
      \label{lem:laptev-heis}
        Let $B$ be a magnetic field supported on the cylinder $\{|\xi|\leq r_0\}\subset \mathbb{H}^1$, $r_0>0$, such that $\flux{B}\notin \mathbb{Z}$. Then, there exists a constant $c>0$ such that
        \begin{equation}
            Q_B(u) \ge c \int_{\{|\xi|>r_0\}} \frac{|u|^2}{|\xi|^2}\,dq, \qquad \forall u\in C_c^\infty(\mathbb{H}^1).
        \end{equation}
    \end{lem}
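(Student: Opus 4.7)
The plan is to reduce the inequality to the Aharonov--Bohm Hardy inequality of Theorem~\ref{thm:lw2-heis} via a localization argument, and to handle the resulting boundary error through a one-dimensional trace/Poincar\'e estimate in the radial direction.

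First, by Proposition~\ref{prop:ab-cylinder} I would fix a gauge in which $A = \flux{B}\, d\varphi$ on the exterior region $\{r > r_0\}$; since $\flux{B}\notin\mathbb{Z}$, the constant $c_0 := d(\flux{B}, \mathbb{Z})^2$ is strictly positive. Next, choose a smooth radial cutoff $\chi \in C^\infty([0, \infty); [0,1])$ with $\chi\equiv 0$ on $[0, r_0]$ and $\chi\equiv 1$ on $[2 r_0, \infty)$, satisfying $|\chi'|\lesssim 1/r_0$ and $|\chi''|\lesssim 1/r_0^2$. For any $u \in C_c^\infty(\bH^1)$, the function $\chi u$ lies in $C_c^\infty(\bH^1 \setminus \cZ)$, and on its support the magnetic gradient with respect to $A$ coincides with the one coming from the Aharonov--Bohm potential with flux $\flux{B}$. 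Applying Theorem~\ref{thm:lw2-heis} to $\chi u$ would then yield
\begin{equation*}
Q_B(\chi u) \geq c_0 \int \frac{|\chi u|^2}{r^2}\,dq \geq c_0 \int_{\{r\geq 2 r_0\}} \frac{|u|^2}{r^2}\,dq.
\end{equation*}

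A direct computation, using the reality of $A$ and integration by parts, would give the Leibniz-type identity $Q_B(\chi u) = \int \chi^2 |\nabla_A u|^2 \,dq - \int \chi(\Delta\chi)|u|^2\,dq$. Since $\Delta\chi = \chi''(r) + \chi'(r)/r$ (as $\chi$ depends only on $r$, cf.\ \eqref{eq:sublap-cyl}), and since $|\Delta\chi|\lesssim 1/r_0^2$ is supported in the annulus $\{r_0 \leq r \leq 2 r_0\}$, combining with the previous display would produce
\begin{equation*}
Q_B(u) + \frac{C}{r_0^2}\int_{\{r_0\leq r\leq 2 r_0\}}|u|^2\,dq \;\geq\; c_0 \int_{\{r\geq 2 r_0\}} \frac{|u|^2}{r^2}\,dq.
\end{equation*}
The main obstacle is absorbing the annular error term. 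In the chosen gauge $A$ has no radial component on $\{r > r_0\}$, so the diamagnetic bound $|\partial_r u|^2 \leq |\nabla_A u|^2$ holds there. I would then combine a fundamental-theorem-of-calculus estimate in $r$ expressing $u(r, \varphi, z)$ for $r \in (r_0, 2 r_0)$ in terms of $u(2r_0, \varphi, z)$ and an integral of $\partial_r u$, together with a trace bound at $r = 2 r_0$ reaching into the exterior region, to deduce an estimate of the form $\int_{\{r_0\leq r\leq 2 r_0\}}|u|^2\,dq \leq C_1 r_0^2 Q_B(u) + C_2 r_0^2 \int_{\{r\geq 2r_0\}}|u|^2/r^2\,dq$.

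Inserting this back produces a Hardy inequality on the exterior region $\{r \geq 2 r_0\}$; the remaining annular piece $\int_{\{r_0 < r < 2 r_0\}}|u|^2/r^2\,dq$ is then controlled via the trivial bound $r^{-2} \leq r_0^{-2}$ together with the trace-Poincar\'e estimate, completing the lemma. The delicate point, which is the main obstacle, is arranging the universal constants $C, C_1, C_2$ so that they combine favourably with $c_0$; if the direct estimate turns out too crude, I would iterate the IMS-type argument dyadically, exploiting that $A = \flux{B}\,d\varphi$ on every $\{r > 2^k r_0\}$, so that Theorem~\ref{thm:lw2-heis} applies on each dyadic shell.
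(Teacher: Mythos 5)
Your opening moves are sound: choosing the gauge of Proposition~\ref{prop:ab-cylinder} so that $A=\flux{B}\,d\varphi$ on $\{r>r_0\}$, noting that $Q_B(\chi u)=Q_{A_{\flux{B}}}(\chi u)$ for $\chi u$ supported in $\{r\geq r_0\}$, and the IMS identity $Q_B(\chi u)=\int\chi^2|(\dH+iA)u|^2-\int\chi(\Delta\chi)|u|^2$ are all correct. The gap is exactly where you flag it, and it is fatal rather than merely delicate. Your two inequalities have the form $c_0 T\leq Q_B(u)+C E$ and $E\leq C_1 Q_B(u)+C_2 T$, where $T=\int_{\{r\geq 2r_0\}}|u|^2/r^2$, $E=r_0^{-2}\int_{\{r_0\leq r\leq 2r_0\}}|u|^2$, and $c_0=d(\flux{B},\bZ)^2$. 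Substituting gives $(c_0-CC_2)T\leq(1+CC_1)Q_B(u)$, which is vacuous unless $c_0>CC_2$. But $C$ (from $\|\chi\Delta\chi\|_\infty r_0^2$) and $C_2$ (from bounding $\int_{\{2r_0<r<3r_0\}}|u|^2$ by $r_0^2\int|u|^2/r^2$) are universal constants of order at least $1$, while $c_0\leq 1/4$ and can be arbitrarily small. There is no way to shrink $C_2$: averaging the fundamental-theorem-of-calculus representation over a longer interval $(2r_0,Mr_0)$ drives the coefficient of the exterior term to $1$, not to $0$. The dyadic iteration does not help either, because at each scale $2^kr_0$ the cutoff regenerates an error $\int_{\{2^kr_0\leq r\leq 2^{k+1}r_0\}}|u|^2/r^2$ whose coefficient is again a universal constant independent of $c_0$; summing these inequalities never produces a net positive coefficient in front of the Hardy term.

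The structural reason your localization cannot work is that forcing $\chi u$ to vanish at $r=r_0$ (so that Theorem~\ref{thm:lw2-heis} applies) costs an annular term of exactly the critical Hardy strength, and the only tools available to control it (the gradient and the exterior Hardy term itself) come with flux-independent constants. The paper avoids this entirely: it simply discards the contribution of $Q_B(u)$ inside the cylinder, $Q_B(u)\geq\int_{\{r>r_0\}}|(\dH+iA)u|^2$, and then proves a Hardy inequality on the exterior region for functions with \emph{free} trace at $r=r_0$. This requires more than Theorem~\ref{thm:lw2-heis}: after Fourier transforming in $z$ and $\varphi$, one must show that the effective radial potentials $\lambda_{m,k}(r)/r^2$ with $\lambda_{m,k}(r)=(m+r^2k/2+\flux{B})^2$ dominate $c/r^2$ on $(r_0,\infty)$ up to a weighted Poincar\'e correction on the small intervals where $\lambda_{m,k}$ nearly vanishes, with a constant uniform in $m$, $k$, and (after rescaling) in the inner radius. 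That uniformity is precisely the substitute for the absorption step that fails in your argument, so a correct proof needs this Laptev--Weidl-type analysis (or an equivalent) rather than a cutoff reduction to the global Aharonov--Bohm inequality.
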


    \begin{proof}
        By Proposition~\ref{prop:ab-cylinder}, the vector potential $A$ can be chosen to be $A=\flux{B}\,d\varphi$ for $r>r_0$, where $\flux{B}\in \mathbb{R}$. Then, using cylindrical coordinates and letting $u(\xi,z) = \int_{\mathbb{R}} e^{i zk} \hat u_k(\xi,k)\,dk$, we have
        \begin{equation}
            \begin{split}
                Q_B(u) 
                &\ge \int_{|\xi|>r_0} |(\dH+i A)u|^2\, d\xi dz\\
                &= \int_\mathbb{R} \int_{\bS^1} \int_{r>r_0} |\partial_r u|^2 + \left| \left( \frac{1}{r}\partial_\varphi + \frac{r}2 \partial_z + i \frac{\flux{B}}{r} \right)u  \right|^2  \, rdr \, d\varphi\,dz \\
                &= \int_{\mathbb{R}}  \int_{\bS^1} \int_{r>r_0} |\partial_r \hat u_k|^2 + \left| \left( \frac{1}{r}\partial_\varphi + i\left(\frac{rk}2 + \frac{\flux{B}}{r}\right) \right)\hat u_k  \right|^2  \, rdr \, d\varphi \, dk\\
                &=: \int_{\mathbb{R}} \hat Q_k(\hat u_k)\,dk.
            \end{split}
        \end{equation}
        Here, $\hat Q_k$ is defined by the last equality, and depends on $r_0$. The main observation is that $\hat Q_k$ is connected to the Euclidean magnetic Dirichlet form in $\mathbb{R}^2$ associated with the vector potential $A_k$, whose expression in polar coordinates is $A_k = \left(\frac{r^2k}2 + {\flux{B}}\chi_{\{r>r_0\}}\right) d\varphi$.
        
        We will now apply a slightly modified version of \cite[Thm.~1]{LW99} to $\hat Q_k$, in order to obtain the existence of $C>0$, independent of $k$, such that
        \begin{equation}\label{eq:laptev-heis}
            \hat Q_k(v) \ge C \int_{|\xi|>r_0}\frac{|v|^2 }{|\xi|^2}\,d\xi, \qquad \forall v\in C^\infty_c(\mathbb{R}^2).
        \end{equation}
        Clearly this will complete the proof by Plancherel identity.

        Write $v(r,\varphi) = \sum_{m\in\bZ} v_m(r) e^{im\varphi}$. Then, letting $\lambda_{m,k}(r) = (m+r^2 k/ 2 +\flux{B})^2$, we have
        \begin{equation}\label{eq:boh1}
            \begin{split}
                \hat Q_k(v) 
                &=\sum_{m\in\bZ} \int_{r>r_0} \left(|\partial_r v_{m}|^2 + \lambda_{m,k}(r) \frac{|v_{m}|^2}{r^2}   \right) \, rdr \\
                &\ge \sum_{m\in\bZ}\int_{r>r_0} \lambda_{m,k}(r) \frac{|v_{m}|^2}{r^2}  \, rdr.
            \end{split}
        \end{equation}
        If $k=0$, then $\lambda_{m,0}(r)\ge (\operatorname{dist}(\flux{B},\mathbb{Z}))^2$, and \eqref{eq:laptev-heis} follows. If $k\neq 0$, observe that $\lambda_{m,k}(r) = \lambda_{m,\operatorname{sgn}(k)}(\sqrt{|k|}r)$. Thus, the change of variables $s = \sqrt{|k|}r$ and the first equality of \eqref{eq:boh1} yield that in order to prove \eqref{eq:laptev-heis} it suffices to obtain existence of $C>0$ such that for all $s_0>0$ and $\varrho\in \{-1,+1\}$ it holds:
        \begin{equation}\label{eq:laptev-heis-2}
            \hat Q^\varrho(w):=\int_{s>s_0} \left(| w'|^2 + \lambda_{m}^\varrho(s) \frac{|w|^2}{s^2}   \right) \, sds \ge C \int_{s>s_0} \frac{|w|^2}{s^2}\,sds,
        \end{equation}
       for all $m\in \mathbb{Z}$, and $w\in C^\infty(\mathbb{R}_+)$ such that $\supp w\subset (0,\bar r]$ for some $\bar r>0$.
    Here, we let $\lambda_{m}^{\pm}(s) = (m \pm {s^2} / {2}+\flux{B})^2$, and omitted the dependence of $\hat Q^\varrho$ on $s_0$, to ease the notation.

    We now present an argument for the case $\varrho = -1$, the other being analogous up to some sign changes.
    Let $\flux{B} = m_0+\gamma$ for  $m_0\in \mathbb{Z}$ and $\gamma\in (-{1} / {2}, {1} /{2}]$, $\gamma\neq 0$, and fix  $\varepsilon \in (0, |\gamma|/ 2)$.  Moreover, for any $\ell\in \mathbb{N}$ we define the following intervals:
        \begin{equation*}
            I_{\ell} = (\alpha_{\ell},\beta_{\ell}),
        \quad\text{where}\quad
        \alpha_{\ell} = \sqrt{{2(\ell+\gamma)-\varepsilon}},\quad \beta_{\ell}=\sqrt{2(\ell+\gamma)+\varepsilon},
        \end{equation*}
        where we let by convention $I_0=\varnothing$ if $\gamma<0$.
        One checks that $\lambda_{m}^-(s)<\varepsilon^2 / 4$ if and only if $m+ m_0\ge 0$ and $s\in I_{m+m_0}$,
%
        and that there exists $\Lambda = \Lambda(\varepsilon,\gamma)$ such that
        \begin{equation}\label{eq:bound-up}
            \beta_{\ell} \le \Lambda \alpha_\ell, 
            \qquad
            |I_{\ell}| \le \Lambda \alpha_{\ell}, \qquad \forall \ell\in\mathbb{N}.
        \end{equation}

        If $m< -m_0$, then $\lambda_m^-(s)\ge \varepsilon^2/4$ for all $s>0$, and we immediately get
        \begin{equation}
            \int_{s>s_0} \frac{|w|^2}{s^2} sds \le \frac{4}{\varepsilon^2} \int_{s>s_0} \lambda_m^-(s) \frac{|w|^2}{s^2}\,sds
            \le \frac 4{\varepsilon^2}\hat Q^-_{s_0}(w).
        \end{equation}
        Let now $\ell:=m + m_0 \ge 0$. If $I_\ell\cap (s_0,+\infty)=\varnothing$, we can proceed as above. If this is not true, we choose a cut-off function $\psi:\mathbb{R}_+\to [0,1]$ such that $\psi^{-1}(1)=\overline{I_{\ell}}$, $\supp\psi \subset (\alpha_\ell-|I_\ell|/2, \beta_\ell+|I_\ell|/2))$, and $|\psi'|\le c|I_\ell|^{-1}$.
        Then, $\lambda_m^-(s)\ge \varepsilon^2 /4$ if $\psi(s)\neq 1$ and we obtain              
        \begin{equation}\label{eq:first-step-lap}
            \begin{split}
              \frac12\int_{s>s_0} \frac{|w|^2}{s^2}\,sds
                &\le  \int_{s>s_0} \frac{|\psi w|^2}{s^2}\,sds + \int_{s>s_0}|1-\psi|^2 \frac{|w|^2}{s^2}\,sds \\
                &\le  \int_{s>s_0} \frac{|\psi w|^2}{s^2}\,sds +  \frac 4{\varepsilon^2}\hat Q^-(w).
            \end{split}
        \end{equation}
        Hence, we are left to bound the first term on 
        the right-hand side of
        the above. In order to do this, we recall the following
        weighted Poincaré inequality, for $0\le \alpha<\beta$ (see \cite{brezis1997blow}):
        \begin{equation*}
            \int_\alpha^\beta |u|^2\,rdr \le \frac{|\beta-\alpha|^2}{2}\int_\alpha^\beta|u'(r)|^2\,rdr, \qquad \forall u\in W^1(\alpha,\beta),\, u(\beta)=0.
        \end{equation*}
        Then, since $|\supp\psi\cap\{s>s_0\}| \le 2|I_\ell|$ and
        $|\psi'|\le c|I_\ell|^{-1}$,  we have,
        \begin{equation*}
            \begin{split}
                \frac 14 \int_{s>s_0} |\psi w|^2\,sds 
                &\le \frac{|I_\ell|^2}2 \int_{\supp\psi\cap \{s>s_0\}}|(\psi w)'|^2\,sds\\
                &\le {|I_\ell|^2} \int_{s>s_0}| w'|^2\,sds + c^2\int_{\supp\psi\setminus I_\ell} |w|^2\,sds\\
                &\le |I_\ell|^2 \hat Q^-(w) + c^2 \left(\beta_\ell+\frac{|I_\ell|}2\right)^2 \int_{\supp\psi\setminus I_\ell} \frac{|w|^2}{s^2} \,sds \\
                &\le \left(|I_\ell|^2 + \frac{4c^2}{\varepsilon^2}\left( \beta_\ell + \frac{|I_\ell|}{2} \right)^2  \right) Q^-(w).
            \end{split}
        \end{equation*}
        Here, we used that $\lambda_m^-(s)\ge \varepsilon^2 /4$ on $\supp\psi\setminus I_\ell$ and that $\sup (\supp\psi\setminus I_\ell)\le \beta_\ell + |I_\ell|/2$.
        Finally, thanks to \eqref{eq:bound-up} we have
        \begin{equation*}
            \int_{s>s_0} \frac{|\psi w|^2}{s^2}\,sds \le \frac{1}{\left( \alpha_\ell - |I_\ell|/2 \right)^2 } \int_{\supp\psi} |\psi w|^2\,sds 
            \le   C Q^-(w).
        \end{equation*}
        Observe that the constant $C$ depends only on $\Lambda=\Lambda(\varepsilon,\gamma)$ and is thus independent of~$s_0$. Together with \eqref{eq:first-step-lap} this proves \eqref{eq:laptev-heis-2}, completing the proof of the statement.
    \end{proof}
	
\begin{proof}[Proof of Theorem \ref{thm:hardy.logarithm}]
Thanks to Proposition~\ref{prop:core}, it is enough to prove the thesis for $u\in C^\infty_c(\mathbb{H}^1\setminus\cZ)$.
    Also, by Lemma~\ref{lem:laptev-heis}, it suffices to show that 
  \begin{equation}\label{eq:unopiu}
            Q_B(u) \ge c\int_{\{|\xi|<r_0\}} \frac{|u|^2}{|\xi|^2 (\log^2
              \frac{|\xi|}{r_1} + 1)}\,dq, \qquad
            \forall u \in C^\infty_c(\mathbb{H}^1\setminus \cZ).
        \end{equation}
  Thanks to 
  \eqref{eq:sublap-cyl} and the Fubini theorem, we write $Q_B(u)$
  in cylindrical coordinates: 
  \begin{equation}\label{eq:quadratic.form.radial}
    Q_B(u) 
    =  
    \int_{0}^{+\infty} \int_{\bS^1 \times \R}
    \left[\abs{\partial_r u}^2 
      + \frac{\abs{( \partial_{\varphi}
          + \frac{r^2}{2}\partial_z + i \alpha(r,\varphi,z))u}^2}{r^2}\right]
    \, r dr d\varphi dz.
  \end{equation}
  Recall the following one-dimensional Hardy-type inequality 
  (see, e.g., \cite[Prop.~2.4]{cassano2018self} for a proof):
        \begin{equation}\label{eq:Hardy.radial}
          \int_{0}^{+\infty} \frac{|f|^2}{r^2\log^2\frac{r}{r_1}}\,rdr \le 4\int_0^{+\infty}|f'|\,rdr, \qquad \forall f\in C^\infty_c(\mathbb{R}_+\setminus\{r_1\}).
        \end{equation}
        Let then $I\subset (r_0,+\infty)$ be a bounded open interval
        such that $r_1\in I$, and consider a cut-off function
        $\psi:\mathbb{R}_+\to [0,1]$ such that $\psi\equiv 0$ on a
        neighborhood of $r_1$ and $\psi^{-1}(1) =
        \mathbb{R}_+\setminus I$.
Writing
$ u = \psi u + (1-\psi) u$ we have that
\begin{equation*}
  \begin{split}
    &\int_{\bS^1 \times \R} \int_0^{+\infty} \frac{|u|^2}%
    {r^2 \big(\log^2 \frac{r}{r_1}+1\big)} r \, dr d\varphi dz
    \\ &\leq
    \int_{\bS^1 \times \R} \left[ 2 \int_0^{+\infty}
      \frac{|\psi u|^2}{r^2 \log^2\frac{r}{r_1}} r \, dr + 2
      \int_0^{+\infty} \frac{|(1- \psi) u|^2}{r^2} r \, dr \right]
    d\varphi dz.
    \\ & \leq
    \int_{\bS^1 \times \R}
    \left[ 8 \int_0^{+\infty} |\partial_r(\psi u)|^2 r \, dr
      + 2 \int_I \frac{|u|^2}{r^2} r \, dr
    \right]
    d\varphi dz,
  \end{split}
\end{equation*}
where in the last inequality we have used \eqref{eq:Hardy.radial},
the fact that $\psi u$ is supported outside the set
$\{r_1\}\times \bS^1 \times \R$, and that $1-\psi$ is supported in $I$.
We immediately have that
\begin{equation}\label{eq:to.be.added}
  \begin{split}
    &\int_{\bS^1 \times \R} \int_0^{+\infty} \frac{|u|^2}%
    {r^2 \big(\log^2 \frac{r}{r_1}+1\big)} r \, dr d\varphi dz
    \\ & \leq
  \int_{\bS^1 \times \R}
  \left[ 16 \int_0^{+\infty}
    |\partial_r u|^2 r \, dr
    +
    \left(16 (\sup I)^2 \norm{\psi'}_{\infty}^2 + 2\right)
    \int_I \frac{|u|^2}{r^2} r \, dr
  \right]
  d\varphi dz.
\end{split}
\end{equation}
Thanks to the explicit expression \eqref{eq:quadratic.form.radial} and
Lemma~\ref{lem:laptev-heis}
we conclude that for some $C>0$
\begin{equation*}
\int_{\bH^1} \frac{|u|^2}%
    {\abs{\xi}^2 \big(\log^2 \frac{\abs{\xi}}{r_1}+1\big)} dq
     \leq
     C \, Q_B(u),
   \end{equation*}
that gives \eqref{eq:unopiu} and concludes the proof.
\end{proof}

\section{Magnetically improved Hardy inequalities from the origin}
%
The main objective of this section is to improve
the Hardy-type inequality~\eqref{eq:GL}
due to Garofalo and Lanconelli \cite{GarofaloLanconelli1990}.
In particular, we establish Theorem~\ref{thm:impro},
showing the subcriticality of the operator
\begin{equation*}
    \co := -\Delta_A  - \frac{r^2}{\rho^4}
\end{equation*}
for Aharonov--Bohm or mild magnetic fields.
 This is the content of Section~\ref{ss:impro},
  where we also comment on the relations with the analogous theory 
  in the Euclidean case.

Before, in Section~\ref{ss:follandstein}, 
we prove Theorem~\ref{thm:GL-FS}
containing a Hardy-type inequality for 
the Folland--Stein operator~\eqref{Folland}.
We use this result to deduce 
the quantitative lower bound~\eqref{eq:impro-rot-intro}
for the operator~$P_A$.

\subsection{Quantitative improvement via Aharonov--Bohm vector potentials}\label{ss:follandstein}

Let $A$ be an Aharonov--Bohm potential on $\mathbb H^1\setminus \cZ$. By Proposition~\ref{prop:AB}, we can assume $A = \alpha \, d\varphi$ for 
$\alpha\in \mathbb R \setminus \bZ$. 
Henceforth, we denote the associated magnetic form by $Q_\alpha$. 

For $\alpha\in \mathbb R$, 
we consider the Folland--Stein operator~\eqref{Folland}
due to~\cite{follandEstimates1974}.
The following proposition highlights the connection between 
the Aharonov--Bohm potentials and~$\cL_\alpha$.

\begin{lem}\label{l:quadform}
For any $\alpha\in\bR$
and \(u\in C^\infty_c(\mathbb H^1\setminus\mathcal Z)\),
we have
\begin{equation}
 Q_\alpha (u) = \langle \cL_\alpha u,u \rangle + \alpha^2 \int_{\mathbb H^1} \frac{|u|^2}{r^2}dp + 2\alpha\langle -i \partial_\varphi u, r^{-2} u\rangle.
\end{equation}
\label{lem:decomp}
\end{lem}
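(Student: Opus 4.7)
The strategy is a direct algebraic computation followed by one integration by parts, carried out in the cylindrical orthonormal frame $\{R,\Phi\}$ from Section~\ref{sec:cylindrical}. The key is to rewrite the magnetic sub-Laplacian $-\Delta_{A_\alpha}$ by isolating the terms that add up to the Folland--Stein operator $\cL_\alpha$.

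First, since $u\in C^\infty_c(\mathbb H^1\setminus\mathcal Z)$, the potential $A_\alpha=\alpha\,d\varphi \bmod \omega$ is smooth on an open set containing the support of $u$, so we may use the pointwise formula~\eqref{eq:m-subLapl} and formal integration by parts without concern. Identifying the horizontal $1$-form $A_\alpha$ with the horizontal vector field via the dual basis $\{dr,rd\varphi\}$ of $\{R,\Phi\}$, I would observe that $A_\alpha=\frac{\alpha}{r}(rd\varphi)$, so in the frame $\{R,\Phi\}$ the corresponding vector field has components $A_R=0$ and $A_\Phi=\alpha/r$. In particular, $|A_\alpha|^2=\alpha^2/r^2$.

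Next, I would plug these components into~\eqref{eq:m-subLapl} (rewritten in the cylindrical frame). The ``divergence'' term $RA_R+\Phi A_\Phi$ reduces to $\Phi(\alpha/r)=(\tfrac{1}{r}\partial_\varphi+\tfrac{r}{2}\partial_z)(\alpha/r)$, which vanishes since $\alpha$ is constant and $\alpha/r$ depends only on $r$. For the drift term, using $\Phi=\tfrac{1}{r}\partial_\varphi+\tfrac{r}{2}\partial_z$, I compute
\begin{equation*}
  2 A_\alpha\cdot\nH u \;=\; \frac{2\alpha}{r}\,\Phi u \;=\; \frac{2\alpha}{r^2}\,\partial_\varphi u + \alpha\,\partial_z u.
\end{equation*}
Substituting these into~\eqref{eq:m-subLapl} and collecting the $-i\alpha\partial_z u$ term with $-\Delta u$ yields
\begin{equation*}
  -\Delta_{A_\alpha} u \;=\; \cL_\alpha u + \frac{\alpha^2}{r^2}\,u + 2\alpha\,\frac{-i\partial_\varphi u}{r^2}.
\end{equation*}

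Finally, I would take the $L^2(\mathbb H^1)$ pairing of this identity with $u$. Since $u\in C^\infty_c(\mathbb H^1\setminus\mathcal Z)$, integration by parts gives $\langle -\Delta_{A_\alpha}u,u\rangle=Q_\alpha(u)$, and the remaining two terms directly produce the claimed expression. The main (and really only) obstacle is bookkeeping: keeping the sign conventions straight between the horizontal $1$-form $A_\alpha$, its dual vector field, and the ordering of factors in $\langle\,\cdot\,,\,\cdot\,\rangle$ so that the cross term assembles as $2\alpha\langle -i\partial_\varphi u, r^{-2}u\rangle$.
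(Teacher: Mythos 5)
Your computation is correct and matches the paper's proof in substance: both work in the cylindrical orthonormal frame $\{R,\Phi\}$, where $A_\alpha$ has components $(0,\alpha/r)$, observe that $\Phi(\alpha/r)=0$ and $|A_\alpha|^2=\alpha^2/r^2$, and split $\tfrac{2\alpha}{r}\Phi = \tfrac{2\alpha}{r^2}\partial_\varphi + \alpha\partial_z$ so that the $\partial_z$ piece combines with $-\Delta$ into the Folland--Stein operator $\cL_\alpha$. The only cosmetic difference is that the paper expands the square $|(\dH+i\alpha\,d\varphi)u|^2=|Ru|^2+\bigl|\Phi u+i\alpha u/r\bigr|^2$ directly at the level of the quadratic form, which avoids invoking \eqref{eq:m-subLapl} and the final integration by parts.
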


\begin{proof}
Compute
\begin{equation}
\begin{split}
| (d_H+i \alpha\,d\varphi)u | ^2 
&= | R u | ^2 + \left| \Phi u + i\alpha\frac{u}{r} \right|^2 \\
&= | d_H u |^2 + 2\alpha\Re \left( (-i\Phi u) \overline{\frac{u}{r}} \right) + \alpha^2\frac{|u|^2}{r^2}.
\end{split}
\end{equation}
The statement follows by integrating the above and observing that, since \(-i\Phi\) is a symmetric operator, it holds \(\langle-i\Phi u, r^{-1}u\rangle\in \mathbb R\). 
\end{proof}

We now prove Theorem~\ref{thm:GL-FS},
which contains a sharp Hardy inequality for the Folland--Stein operator.
This will immediately imply an improvement of the Garofalo--Lanconelli Hardy inequality for Aharonov--Bohm magnetic potentials, under a symmetry assumption.  
 
\begin{proof}[Proof of Theorem~\ref{thm:GL-FS}]
We define 
\begin{equation*}
    \rho_\alpha := \rho w_\alpha
    \qquad\mbox{and}\qquad 
    w_\alpha(r,\varphi,z) := \exp\left({i \frac{\alpha}{2} \arctan \frac{4z}{r^2}}\right).
\end{equation*}
Observe that $\rho_\alpha^{-2}$
is proportional to the fundamental solution for $\cL_\alpha$, $\alpha\neq 2n+1$, $n\in\mathbb Z$, as shown in \cite{follandEstimates1974}.
Let $u \in C_c^{\infty}(\bH^1 \setminus \cZ)$ and set $v = u \rho_\alpha$. We  compute the integrand of the quadratic form associated with $\cL_\alpha$. This yields
\begin{align*}
    |\nH u|^2 
    &= \frac{|\nH v|^2}{\rho^2} + \frac{|v|^2}{\rho^4}|\nH \rho_\alpha|^2-2\Re\left(  \frac{\bar v\nH v\cdot \nH \overline{\rho_\alpha}}{\rho^2\overline{\rho_\alpha}}\right), \\
    (\partial_z u)\bar u &=
     \frac{\bar v \partial_z v}{\rho^2} - \frac{|v|^2}{\rho^2\rho_\alpha}\partial_z\rho_\alpha.
\end{align*}
We recall that $\nH u =(Xu)X+(Yu)Y$ is the horizontal gradient of $u$.
Direct computations show that
\begin{equation}\label{eq:nabla-rho-alpha}
    \nH \rho_\alpha = w_\alpha\left( \nH\rho + i\alpha \nH^\perp \rho \right), 
    \qquad 
    \partial_z \rho_\alpha = w_\alpha \left( \partial_z \rho + 2i\alpha \frac{|\nH \rho|^2}{\rho} \right).
\end{equation}
Here, we denoted $\nH^\perp \rho = (-Y \rho)X + (X\rho) Y$. In particular, we have that 
\begin{equation*}
    |\nH \rho_\alpha|^2 = (1+\alpha^2)|\nH \rho|^2
    \qquad\text{and}\qquad
    i\alpha\frac{|v|^2}{\rho^2\rho_\alpha}\partial_z\rho_\alpha= i\alpha \frac{|v|^2}{\rho^3}\partial_z\rho-2\alpha^2\frac{|v|^2}{\rho^4}|\nH \rho|^2
    .
\end{equation*}
We thus obtain
\begin{equation}\label{eq:GL-form-decomp}
    \langle \cL_\alpha u,u \rangle
    =\int_{\bH^1}\frac{|\nH v|^2}{\rho^2} + (1-\alpha^2)\frac{|v|^2}{\rho^4}|\nH \rho|^2-2\Re\left(  \frac{\bar v\nH v\cdot \nH \overline{\rho_\alpha}}{\rho^2\overline{\rho_\alpha}}\right)
    +\alpha \Im\int_{\bH^1} \frac{\bar v\partial_z v}{\rho^2}.
\end{equation}
Here, we used the fact that
\begin{equation*}
    \Re\int_{\bH^1}\left[\frac{\bar v \partial_z v}{\rho^2} - \frac{|v|^2}{\rho^3}\partial_z\rho\right] = \Im\int_{\bH^1} \frac{|v|^2}{\rho^3}\partial_z\rho = 0.
\end{equation*}

By \eqref{eq:nabla-rho-alpha}, using the fact that $\rho^{-2}$ is proportional to the fundamental solution of the sub-Laplacian $-\sublap$ and that $\abs{v(0)}=0$, we obtain
\begin{equation*}
    -2\int_{\bH^1}\Re\left(  \frac{\bar v\nH v\cdot \nH \overline{\rho_\alpha}}{\rho^2\overline{\rho_\alpha}}\right)\,dp = \alpha \Im \left( \int_{\bH^1} \nH^\perp (\rho^{-2})\cdot (\bar v\nH v)\,dp\right).
\end{equation*}
To conclude the proof we are thus left to show that
\begin{equation*}
    \int_{\bH^1}\frac{|\nH v|^2}{\rho^2} +\alpha \Im \left( \int_{\bH^1} \left(\nH^\perp (\rho^{-2})\cdot (\bar v\nH v)+ \frac{\bar v\partial_z v}{\rho^2}\right)\,dp\right) \ge 0.
\end{equation*}
Integrating by parts the second term on 
the left-hand side and recalling that $[X,Y] = Z$, we have
\begin{equation*}
    \int_{\bH^1} \left( \nH^\perp (\rho^{-2})\cdot (\bar v\nH v)+ \frac{\bar v\partial_z v}{\rho^2}\right) \,dp= \int_{\bH^1} \frac{\nH^\perp \bar v \cdot \nH v}{\rho^2}\,dp.
\end{equation*}
Since $|\alpha|< 1$ and $|\nH^\perp \bar v\cdot\nH v|\le |\nH v|^2$, this completes the proof, except for the optimality.

The sharpness follows as in the Garofalo--Lanconelli case, by considering a smoothing of the functions $\chi_{\{1/k\le \rho\le k\}}\, \rho_\alpha^{-1}$, $k\in\mathbb N$. In particular, we let $u_k:=v_k e^{if_\alpha}$, where $v_k$ is a compactly supported smooth approximation of $\chi_{\{1/k\le \rho\le k\}}\, \rho^{-1}$ and define
\(f_\alpha:=-(\alpha/2) \arctan (4z/r^2)\). 
Since both $v_k,f_\alpha$ are real-valued, we have
\begin{equation*}
\begin{split}
\int_{\mathbb{H}^1}|\nH u_k|^2\;dp&=\int_{\mathbb{H}^1}|\nH v_k+iv_k\nH f_\alpha|^2\;dp
=\int_{\mathbb{H}^1}|\nH v_k|^2+v_k^2|\nH f_\alpha|^2\,dp,\\
-i\int_{\mathbb{H}^1}\bar u_k Z u_k \;dp&=-i\int_{\mathbb{H}^1}(Z v_k+iv_kZ f)v_k\,dp
=\int_{\mathbb{H}^1}v_k^2Z f_\alpha(r,z)\;dp.
\end{split}
\end{equation*}
Here, in the last equation, we have integrated by parts.
We then obtain
\begin{equation*}
    \int_{\bH^1}\bar u_k\cL_\alpha u_k\,dp=
    \int_{\mathbb{H}^1}|\nH v_k|^2+v_k^2(|\nH f_\alpha|^2+\alpha Zf_\alpha)\,dp.
\end{equation*}
Direct computations show that $|\nH f_\alpha|^2+\alpha Zf_\alpha=-\alpha^2 r^2/\rho^4$. Recalling that $v_k$ is a minimizing sequence for 
the Garofalo--Lanconelli Hardy inequality, 
the proof is then concluded by observing the following:
\begin{equation*}
    \lim_{k\to\infty}\frac{\int_{\bH^1}\bar u_k \cL_\alpha u_k\,dp}{\int_{\bH^1}|u_k|^2\frac{r^2}{\rho^4}\,dp}=
    \lim_{k\to\infty}\left(\frac{\int_{\mathbb{H}^1}|\nH v_k|^2-\alpha^2v_k^2\frac{r^2}{\rho^4}\,dp}{\int_{\bH^1}v_k^2\frac{r^2}{\rho^4}\,dp}\right)=1-\alpha^2. \qedhere
\end{equation*}
\end{proof}

\begin{proposition}\label{prop:improvement-GL}
    Let  $A = \alpha d\varphi$ for $\alpha\in\bR$. 
    Write $\alpha = n+\gamma$, where $n\in \mathbb Z$ and $\gamma = \pm d(\alpha,\mathbb Z)$. Then, for any $u\in C^\infty_c(\bH^1\setminus\cZ)$, 
    it holds:
\begin{enumerate}
    \item [i.] If  $\gamma\langle-i\Phi u,u r^{-1} \rangle\ge -\gamma n \int_{\bH^1}\frac{|u|^2}{r^2}\,dp$, then
    \begin{equation*}
        Q_\alpha(u) -\int_{\bH^1}\frac{r^2}{\rho^4}|u|^2\,dp\ge  d(\alpha,\bZ)^2\int_{\bH^1}\frac{|u|^2}{r^2}\,dp.
    \end{equation*}
    In particular, for real-valued $u$ this always holds if $\gamma n \ge 0$ (e.g., if $|\alpha|\le 1/2$).
    \item [ii.] If  $-i\partial_\varphi u = nu$ or if $u(x,y,z) = u(x,y,-z)$, then
    \begin{equation}\label{eq:improved-GL-boh}
        Q_\alpha(u) - \int_{\bH^1}\frac{r^2}{\rho^4}|u|^2\,dp \ge d(\alpha,\bZ)^2\int_{\bH^1}\frac{|u|^2}{r^2}\left( 1-|\nH \rho|^4\right)\,dp.
    \end{equation}
\end{enumerate}
\end{proposition}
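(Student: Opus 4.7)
The plan exploits two complementary descriptions of $Q_\alpha(u)$. Writing $\alpha = n + \gamma$ with $n \in \bZ$ and $|\gamma| \leq 1/2$, gauge invariance gives $Q_\alpha(u) = Q_\gamma(v)$ for $v := e^{-in\varphi}u$. I will combine the direct identity
\begin{equation*}
Q_\beta(u) = Q(u) + \beta^2 \int_{\bH^1} \frac{|u|^2}{r^2}\,dp + 2\beta \langle -i\Phi u, u/r\rangle,
\end{equation*}
obtained by expanding $|(\dH + i\beta\,d\varphi)u|^2 = |Ru|^2 + |\Phi u + i\beta u/r|^2$, with Lemma~\ref{l:quadform} (which recasts $Q_\beta$ in terms of the Folland--Stein operator $\cL_\beta$) and Theorem~\ref{thm:GL-FS}.

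For Part (i), I split $Q_\alpha(u) = Q_n(u) + [Q_\alpha(u) - Q_n(u)]$. Gauge invariance and Garofalo--Lanconelli's inequality~\eqref{eq:GL} yield $Q_n(u) = Q(e^{-in\varphi}u) \geq \int_{\bH^1} r^2|u|^2/\rho^4\,dp$. The identity above gives
\begin{equation*}
Q_\alpha(u) - Q_n(u) = \gamma(\gamma + 2n) \int_{\bH^1} \frac{|u|^2}{r^2}\,dp + 2\gamma \langle -i\Phi u, u/r\rangle,
\end{equation*}
and the hypothesis $\gamma \langle -i\Phi u, u/r\rangle \geq -\gamma n \int |u|^2/r^2\,dp$ makes this at least $\gamma^2 \int|u|^2/r^2\,dp = d(\alpha,\bZ)^2 \int|u|^2/r^2\,dp$. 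For real-valued $u$, integration by parts in $\varphi$ and $z$ shows that $\langle -i\Phi u, u/r\rangle$ is simultaneously real and purely imaginary, hence zero, so the hypothesis reduces to $\gamma n \geq 0$.

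For Part (ii), Lemma~\ref{l:quadform} applied to $v$ reads
\begin{equation*}
Q_\gamma(v) = \langle \cL_\gamma v, v\rangle + \gamma^2 \int_{\bH^1} \frac{|v|^2}{r^2}\,dp + 2\gamma \langle -i\partial_\varphi v, r^{-2} v\rangle,
\end{equation*}
and Theorem~\ref{thm:GL-FS} (applicable since $|\gamma| \leq 1/2 < 1$) gives $\langle \cL_\gamma v, v\rangle \geq (1-\gamma^2)\int_{\bH^1} r^2|v|^2/\rho^4\,dp$. Using $|\nH\rho|^4 = r^4/\rho^4$ and $|v| = |u|$, a rearrangement yields
\begin{equation*}
Q_\alpha(u) - \int_{\bH^1} \frac{r^2|u|^2}{\rho^4}\,dp \geq d(\alpha,\bZ)^2 \int_{\bH^1} \frac{|u|^2}{r^2}(1 - |\nH\rho|^4)\,dp + 2\gamma \langle -i\partial_\varphi v, r^{-2} v\rangle,
\end{equation*}
reducing the proof to showing that the cross term is non-negative. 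Under $-i\partial_\varphi u = nu$, $v$ is $\varphi$-independent, $\partial_\varphi v = 0$, and this holds immediately.

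The main obstacle is the $z$-symmetric subcase. Here $v$ inherits the $z$-symmetry of $u$ but may depend nontrivially on $\varphi$; decomposing $v = \sum_{m \in \bZ} \hat v_m(r,z)\, e^{im\varphi}$, each $\hat v_m$ inherits the $z$-symmetry, and the cross term becomes $4\pi\gamma \sum_m m \int|\hat v_m|^2\, r^{-1}\,dr\,dz$, whose sign is not a priori determined. To close this case I expect to argue per-mode: the $z$-symmetry of each $\hat v_m$ kills the antisymmetric-in-$z$ contributions to $Q_\gamma$ in mode $m$, and combining the Garofalo--Lanconelli inequality applied to $\hat v_m e^{im\varphi}$ with the sharper Hardy-from-the-axis estimate of Theorem~\ref{thm:lw2-heis} should absorb the potentially negative Fourier modes into the positive Hardy weight on the right-hand side.
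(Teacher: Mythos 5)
Parts (i) and the first subcase of part (ii) of your proposal are correct and amount to the paper's own computation. For (i), writing $Q_\alpha(u)-Q_n(u)=\gamma(\gamma+2n)\int|u|^2r^{-2}\,dp+2\gamma\langle-i\Phi u,u/r\rangle$ and bounding $Q_n(u)$ below by the Garofalo--Lanconelli inequality~\eqref{eq:GL} via gauge invariance is the same argument as in the paper, merely organized without first gauging to $n=0$; your justification that $\langle-i\Phi u,u/r\rangle=0$ for real $u$ is fine. The subcase $-i\partial_\varphi u=nu$ of (ii) is exactly the paper's proof: Lemma~\ref{l:quadform} plus Theorem~\ref{thm:GL-FS}, with the cross term vanishing.

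The $z$-symmetric subcase of (ii) is a genuine gap. What you offer there is a plan (``I expect to argue per-mode\ldots should absorb\ldots''), not a proof, and it is precisely the hard part of the proposition: the paper devotes a separate technical result to it (Lemma~\ref{lem:impro-phi-v}). After Fourier decomposition in $\varphi$, the mode $k$ carries the effective Folland--Stein parameter $\gamma+k$, which leaves the admissible range $(-1,1)$ of Theorem~\ref{thm:GL-FS} as soon as $|k|\ge 2$. For those modes the paper redoes the ground-state substitution of Theorem~\ref{thm:GL-FS}, absorbs the indefinite part via $2|\Im(R\bar w\,\Phi_0 w)|\le|Rw|^2+|\Phi_0 w|^2$, and converts the leftover term by a double integration by parts into $-8k\int\Im(\bar v\,\eul(v))\,z\rho^{-4}\,\dmu$; it is exactly this term that the $z$-symmetry annihilates (condition~\eqref{eq:ass-eul}). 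Your sketch identifies neither where the symmetry actually enters nor how the ``absorption'' is quantified; note also that one cannot simply add the Garofalo--Lanconelli bound to the bound of Theorem~\ref{thm:lw2-heis}, since two lower bounds for the same form can only be combined convexly, which degrades the constants. Your instinct to exploit the per-mode content of Theorem~\ref{thm:lw2-heis} is nonetheless sound: the per-mode estimate~\eqref{eq:Lm} yields $\hat Q_{\gamma,k}(v)\ge\int|v|^2r^{-2}\,\dmu$ whenever $|\gamma+k|\ge 1$, which already dominates the mode-$k$ target because $r^{-2}\ge r^2\rho^{-4}$, while for $|\gamma+k|<1$ the per-mode version of Theorem~\ref{thm:GL-FS} applies with $(\gamma+k)^2\ge\gamma^2$ since $|\gamma|\le 1/2$. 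Carried out carefully, this would close the remaining modes by a route different from Lemma~\ref{lem:impro-phi-v}; but as written your proposal does not contain this (or any other) complete argument for the $z$-symmetric case.
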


\begin{proof}
We start by observing that, performing the gauge transformation $v(r,\varphi,z) = u(r,\varphi,z)e^{i n\varphi}$, we can assume $n=0$, i.e., $|\alpha|\le 1/2$.
    
    The first part of the statement follows immediately from Lemma~\ref{l:quadform} and the Garofalo--Lanconelli Hardy inequality \eqref{eq:GL}. Indeed, since $n=0$, we have that $\alpha\langle-i\Phi u,u r^{-1} \rangle\ge 0$, which implies that
    \begin{equation*}
        \langle \cL_\alpha u, u\rangle + 2\alpha\langle -i\partial_\varphi u, r^{-2}\varphi\rangle \ge \langle -\sublap u, u \rangle \ge \int_{\bH^1} \frac{r^2}{\rho^4}|u|^2\,dp.
    \end{equation*}
    
    The second part of the statement, in the case $\partial_\varphi u= nu =0$, 
    follows from Lemma~\ref{lem:decomp}, Theorem~\ref{thm:GL-FS}, and the fact that
    \begin{equation*}
        \frac{1}{r^2}-\frac{r^2}{\rho^4} = \frac{1}{r^2}\left(1-|\nH\rho|^4\right).
    \end{equation*}
    In the case where $u(x,y,z)=u(x,y,-z)$, the statement follows from Lemma~\ref{lem:impro-phi-v}. Indeed, considering cylindrical coordinates and writing $u(r,\varphi,z)=\sum_k u_k(r,z)e^{ik\varphi}$, the change of variables $z\mapsto -z$ allows to show that
    \begin{equation*}
        \int_{\bR_+\times \bR} \frac{z}{\rho^4}|u_k|^2\eul(\arg(u_k)) \,rdrdz =0, \qquad \forall k\in \bZ.
        \qedhere
    \end{equation*}
\end{proof}

Recall that the Euler vector field $\mathcal E$ is the only vector field such that $\mathcal E(u)=\nu u$ if $u$ is $\nu$-homogeneous with respect to the dilations $(r,\varphi, z)\mapsto (\lambda r,\varphi, \lambda^2 z)$. In particular, $\mathcal E(u) = r\partial_r u + 2z\partial_z u$.

\begin{lem}\label{lem:impro-phi-v}
    Let $|\alpha|\le 1/2$. Then, \eqref{eq:improved-GL-boh} holds
    for all $u\in C^\infty_c(\bH^1\setminus\cZ)$ such that, writing $u(r,\varphi,z)=\sum_{k\in\bZ} u_k(r,z) e^{ik\varphi}$,  it holds that  
    \begin{equation}\label{eq:ass-eul}
        \int_{\bR_+\times \bR} \frac{z}{\rho^4}|u_k|^2\left( \frac{z}{r^2} - \operatorname{sgn}(k) \eul(\arg(u_k))\right) \,rdrdz \ge 0,
    \end{equation}
    for any $k\in\bZ$, such that $|k|\ge 2$ or $|k|=1$ and $k\alpha\ge 0$.
\end{lem}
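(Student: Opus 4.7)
By the gauge transformation $u\mapsto u e^{-in\varphi}$ (as used in the proof of Proposition~\ref{prop:AB}), I may reduce to $\alpha\in(-1/2,1/2]$, so that $d(\alpha,\bZ)=|\alpha|$. Using $|\nH\rho|^2=r^2/\rho^2$, the identity $(1-|\nH\rho|^4)/r^2 = 1/r^2 - r^2/\rho^4$ rewrites the target~\eqref{eq:improved-GL-boh} as
\[
Q_\alpha(u) \ge (1-\alpha^2)\int_{\bH^1}\frac{r^2}{\rho^4}|u|^2\,dp + \alpha^2\int_{\bH^1}\frac{|u|^2}{r^2}\,dp.
\]
Expanding $u = \sum_{k\in\bZ} u_k(r,z)e^{ik\varphi}$, orthogonality of the Fourier modes under the relevant quadratic forms reduces the claim to a mode-wise inequality.

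For each mode, I write $u_k=|u_k|e^{i\phi_k}$ with $\phi_k = \arg u_k$. A direct expansion of $Q_\alpha(u_k e^{ik\varphi})$ followed by completion of the square on the phase derivatives yields the key identity
\[
Q_\alpha(u_k e^{ik\varphi}) = \int_{\bH^1}|\nH|u_k||^2\,dp + \int_{\bH^1}|u_k|^2|\nH\psi_k|^2\,dp,
\]
where $\psi_k := \phi_k+(k+\alpha)\varphi$ is treated as a multi-valued phase on $\bH^1\setminus\cZ$, concretely $|\nH\psi_k|^2 = (\partial_r\phi_k)^2 + ((k+\alpha)/r + (r/2)\partial_z\phi_k)^2$. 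Writing the Garofalo--Lanconelli inequality~\eqref{eq:GL} for $|u_k|$ (extended $\varphi$-independently to $\bH^1$) in the sharp form $\int|\nH|u_k||^2\,dp = \int r^2/\rho^4\,|u_k|^2\,dp + R_k$ with remainder $R_k\ge 0$, the mode-wise target reduces to the phase estimate
\[
R_k + \int_{\bH^1}|u_k|^2|\nH\psi_k|^2\,dp \ge 16\alpha^2\int_{\bH^1}\frac{z^2}{r^2\rho^4}|u_k|^2\,dp.
\]

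To close the phase estimate I plan to integrate by parts along the Euler field $\eul = r\partial_r + 2z\partial_z$, which is adapted to the anisotropic Heisenberg dilations (under which $\rho$ has degree one and $z/\rho^4$ has degree $-2$). Expanding $|\nH\psi_k|^2$ and exploiting the identity $\partial_z(z/\rho^4) = (r^4-16z^2)/\rho^8$ together with its radial counterpart, the cross term $(k+\alpha)\partial_z\phi_k$ should transform into an integral proportional to $\operatorname{sgn}(k)\int(z/\rho^4)|u_k|^2\,\eul(\phi_k)\,rdrdz$, up to non-negative residuals that are absorbed by $R_k$ together with the squared contributions $(\partial_r\phi_k)^2$ and $((r/2)\partial_z\phi_k)^2$. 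The hypothesis then bounds this cross term by $\int z^2/(r^2\rho^4)|u_k|^2\,rdrdz$ in exactly the form required, for the listed values of $k$; the remaining modes ($k=0$ or $|k|=1$ with $k\alpha<0$) are handled either directly from Theorem~\ref{thm:GL-FS} or by using $|k|^2 \ge 2|\alpha k|$ to absorb the deficit into $R_k$.

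\textbf{Main obstacle.} The crux is identifying the precise Euler-field integration-by-parts formula that generates both the sign $\operatorname{sgn}(k)$ and the exact coefficient $16\alpha^2$ while keeping the residual manifestly non-negative. Since $\eul$ does not commute with $\partial_r$ and $\partial_z$, commutator terms arise and must be collected together with $R_k$ via a second completion of the square. The delicate balance between $R_k$, the squared phase contributions, and the hypothesis integrand is what allows the coefficient on the right-hand side to match exactly $16\alpha^2$ rather than a strict upper bound.
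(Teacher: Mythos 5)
Your reduction is sound as far as it goes: the Fourier decomposition in $\varphi$, the modulus--phase identity
\begin{equation*}
Q_\alpha(u_ke^{ik\varphi})=\int\Big[(\partial_r m)^2+\tfrac{r^2}{4}(\partial_z m)^2\Big]\,\dmu+\int m^2\Big[(\partial_r\phi_k)^2+\big(\tfrac{k+\alpha}{r}+\tfrac r2\partial_z\phi_k\big)^2\Big]\,\dmu,\quad m=|u_k|,
\end{equation*}
and the rewriting of the target via $1/r^2=r^2/\rho^4+16z^2/(r^2\rho^4)$ all parallel the paper's setup (its Lemma~\ref{l:quadform} plus \eqref{eq:Q-ak}). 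But the proof stops exactly where the lemma begins. After applying the Garofalo--Lanconelli inequality to $|u_k|$, everything hinges on controlling the indefinite cross term $(k+\alpha)\int|u_k|^2\partial_z\phi_k\,\dmu=(k+\alpha)\,\Im\langle \partial_z u_k,u_k\rangle$, and your text explicitly defers this ("the cross term \emph{should} transform into...", "the main obstacle is identifying the precise integration-by-parts formula"). That formula is not a routine bookkeeping step: it is the entire content of the lemma, since it is the only place where the hypothesis~\eqref{eq:ass-eul} — with its specific weight $z/\rho^4$, the factor $\operatorname{sgn}(k)$, and the Euler field acting on $\arg u_k$ rather than on $|u_k|$ — can enter. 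A proof that does not produce the exact coefficient in front of $\int\frac{z}{\rho^4}|u_k|^2\big(\frac{z}{r^2}-\operatorname{sgn}(k)\eul(\arg u_k)\big)\dmu$ with a non-negative prefactor has not used the hypothesis and therefore cannot be correct, because without~\eqref{eq:ass-eul} the statement fails.

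For comparison, the paper's mechanism is genuinely different from the one you sketch: it does not work with the real ground state $\rho^{-1}$ and a remainder $R_k$, but substitutes the \emph{complex} Folland--Stein ground state, $w=v\,\rho_{\alpha+k}$ with $\rho_{\alpha+k}=\rho\exp\big(i\tfrac{\alpha+k}{2}\arctan\tfrac{4z}{r^2}\big)$, as in the proof of Theorem~\ref{thm:GL-FS}. This yields $\langle\tilde\cL_{\alpha+k}v,v\rangle=(1-(\alpha+k)^2)\int\frac{r^2}{\rho^4}|v|^2\dmu+I(v)$ with $I(v)\ge 2k\int\rho^{-2}\Im(R\bar w\,\Phi_0w)\,\dmu$ (absorbing $2\alpha$ of the coefficient by $2|\Im(R\bar w\Phi_0w)|\le|Rw|^2+|\Phi_0w|^2$, which needs $|\alpha|\le 1$), followed by the double integration by parts $\int\rho^{-2}\Im(R\bar w\,\Phi_0 w)\,\dmu=4i\int\bar w\,\eul(w)\,z\rho^{-6}\,\dmu$ and the $1$-homogeneity of $\rho_{\alpha+k}$ to return to $v$. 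Combining with $16\big((k+\alpha)^2-\alpha^2\big)\int\frac{z^2}{r^2\rho^4}|v|^2\dmu$ and the elementary bound $2k(k+2\alpha)\ge|k|$ (valid exactly for $|k|\ge2$, or $|k|=1$ with $k\alpha\ge0$, when $|\alpha|\le1/2$) produces the hypothesis-shaped quantity with prefactor $8|k|$. None of these cancellations is visible in your sketch, and there is no reason to expect your residual terms to be "manifestly non-negative" without them. Finally, the cases $k=0$ and $|k|=1$ with $k\alpha<0$ are not covered by~\eqref{eq:ass-eul} and require the separate argument via Theorem~\ref{thm:GL-FS} at parameter $\alpha+k$ (using $|\alpha+k|=1-|\alpha|\le1$, the pointwise bound $1/r^2\ge r^2/\rho^4$, and $|\alpha|\le1/2$); your alternative "$|k|^2\ge2|\alpha k|$ absorbs the deficit into $R_k$" is not substantiated. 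As it stands the proposal is a plausible plan, not a proof.
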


\begin{proof}
Let us pass in Fourier 
with respect to the $\varphi$ variable in Lemma~\ref{l:quadform}. Since
\begin{equation}
    \Phi = \bigoplus_{k\in \mathbb Z} \Phi_k, \qquad \Phi_k = \frac r2\partial_z +\frac{ik}r,
\end{equation}
we have that 
\begin{equation}\label{eq:Q-ak}
    Q_\alpha(u) = \sum_{k\in \bZ} \hat Q_{\alpha,k} (u_k), \qquad \hat Q_{\alpha,k} (v) = \langle \tilde\cL_{\alpha+k} v , v \rangle+(k+\alpha)^2\int_{\bR_+\times \bR}\frac{|v|^2}{r^2}\,d\mu.
\end{equation}
Here, we let $d\mu := rdrdz$, and denoted by $\tilde \cL_\gamma$ the $0$-th Fourier component of the operator~$\cL_\gamma$. That is,
\begin{equation}
    \tilde \cL_\gamma = R^*R + \Phi_0^*\Phi_0 -i \gamma\partial_z, 
    \qquad \text{in} \qquad 
    L^2(\bR_+\times \bR, d\mu).
\end{equation}
Observe that Theorem~\ref{thm:GL-FS} implies that $\tilde \cL_\gamma \ge (1-\gamma^2)r^2/\rho^4$ for any $\gamma\in [-1,1]$.

The case $k=0$ is easily treated. Indeed, applying Theorem~\ref{thm:GL-FS} yields
\begin{equation}
    \hat Q_{\alpha,0}(v) \ge (1-\alpha^2)\int_{\bR_+\times \bR} \frac{r^2}{\rho^4}|v|^2 + \alpha^2\int_{\bR_+\times \bR}\frac{|v|^2}{r^2}.
\end{equation}
Let us consider now the case $k=1$ and $\alpha\le 0$ (the case $k=-1$ and $\alpha\ge 0$ can be treated analogously). In this case, $|\alpha+k| = 1-|\alpha|\in [-1,1]$. Hence, Theorem~\ref{thm:GL-FS} yields
\begin{equation*}
    \begin{split}
    \hat Q_{\alpha,1}(v) 
    &\ge (1-(1-|\alpha|)^2)\int_{\bR_+\times \bR} \frac{r^2}{\rho^4}|v|^2 + (1-|\alpha|)^2\int_{\bR_+\times \bR}\frac{|v|^2}{r^2} \\
    &\ge (1-\alpha^2)\int_{\bR_+\times \bR} \frac{r^2}{\rho^4}|v|^2 + \alpha^2\int_{\bR_+\times \bR}\frac{|v|^2}{r^2}.
    \end{split}
\end{equation*}
Here, in the second inequality we have used the fact that $1/r^2\ge r^2/\rho^4$.

We are left with the case $|k|\ge 2$ or $|k|=1$ and $k\alpha>0$. Observe that, proceeding as in the proof of Theorem~\ref{thm:GL-FS}, and observing that $\nabla^\perp \bar v\cdot \nabla v  = 2\Im(X\bar v \,Yv) = 2\Im(R\bar v\,\Phi v)$, one obtains
\begin{equation}\label{eq:L-ak}
    \langle \tilde\cL_{\alpha+k} v , v \rangle = (1-(\alpha+k)^2)\int_{\bR_+\times \bR} \frac{r^2}{\rho^4}|v|^2\dmu + I(v),
\end{equation}
where 
\begin{equation*}
    I(v) := \int_{\bR_+\times \bR} \frac{|R w|^2 + |\Phi_0 w|^2}{\rho^2}\,\dmu + 2(\alpha+k) \int_{\bR_+\times\bR} \frac{\Im(R\bar w\,\Phi_0 w)}{\rho^2}\,\dmu, \qquad w:=v\rho_{\alpha+k}.
\end{equation*}
Since $|\alpha|\le 1$ and it holds $2\left|\Im(R\bar w \Phi_0 w)\right|\leq |Rw|^2+|\Phi_0 w|^2$, we obtain
\begin{equation*}
    \begin{split}
        I(v) 
        &\geq (1-|\alpha|) \int_{\bR_+\times \bR} \frac{|R w|^2 + |\Phi_0 w|^2}{\rho^2} \,\dmu + 2 k \int_{\bR_+\times\bR} \frac{\Im(R\bar w\,\Phi_0 w)}{\rho^2}\,\dmu\\
        &\geq 2 k \int_{\bR_+\times\bR} \frac{\Im(R\bar w\,\Phi_0 w)}{\rho^2}\,\dmu.
    \end{split}
\end{equation*}

A double integration by parts yields
\begin{equation}\label{eq:lontano}
    \int_{\bR_+\times \bR}\frac{\Im(R\bar w \Phi_0 w)}{\rho^2}\,\dmu
    =\frac{i}{2}\int_{\bR_+\times \bR}\bar w\frac{8z}{\rho^6}(2z\partial_zw+r\partial_rw)\,\dmu
    =4{i}\int_{\bR_+\times \bR}\bar w \eul(w) \frac{z}{\rho^6}\,\dmu.
\end{equation}
Observe that $\rho_{\alpha+k}$ is $1$-homogeneous, and as such $\eul(\rho_\alpha) = \rho_\alpha$.
Hence, by definition of~$w$ and the fact that the above expression is purely real, we obtain
\begin{equation*}
    \int_{\bR_+\times \bR}\frac{\Im(R\bar w \Phi_0 w)}{\rho^2}\,\dmu = -4\int_{\bR_+\times \bR}\Im(\bar v\eul(v))\frac{z}{\rho^4}\,\dmu.
\end{equation*}
Summing up, from \eqref{eq:L-ak} we have that
\begin{equation*}
    \langle \tilde\cL_{\alpha+k} v , v \rangle \ge (1-(\alpha+k)^2)\int_{\bR_+\times \bR} \frac{r^2}{\rho^4}|v|^2\dmu -8k\int_{\bR_+\times \bR}\Im(\bar v\eul(v))\frac{z}{\rho^4}\,\dmu.
\end{equation*}
Plugging the above in \eqref{eq:Q-ak}, and using that $\frac1{r^2} = \frac{r^2}{\rho^4}+\frac{16 z^2}{r^2\rho^4}$, yields
\begin{multline*}
    \hat Q_{\alpha,k}(v) \ge (1-\alpha^2)\int_{\bR_+\times \bR} \frac{r^2}{\rho^4}|v|^2\dmu +\alpha^2\int_{\bR_+\times \bR} \frac{|v|^2}{r^2}\dmu\\ +8\int_{\bR_+\times \bR} \frac{z}{\rho^4}\left( 2k(2\alpha+k) \frac{z}{r^2}|v|^2 - k \Im(\bar v\eul(v))\right) \,\dmu .
\end{multline*}
Due to the range of $k$ and $\alpha$ under consideration, we have that $2k(\alpha+k)\ge |k|$. The statement follows by assumption \eqref{eq:ass-eul} observing that $\Im(v\eul(v))=|v|^2\eul(\arg v)$.
\end{proof}


\begin{remark}\label{remark:xiao}
  In \cite{Xiao2015}, the author claims to prove an improvement of a weighted Hardy inequality in the spirit of Garofalo--Lanconelli inequality \eqref{eq:GL} via the magnetic potential  $A=-(Y\rho/\rho)\,dx+(X\rho/\rho)\,dy$ on $\bH^1\setminus\{0\}$. However, as we show in Appendix~\ref{app:xiao}, the stated result lacks a crucial symmetry assumption on the class of functions under consideration. This agrees with the fact that, due to the trivial cohomology of $\mathbb H^1\setminus\{0\}$, the vector potential $A$ is exact and thus it cannot improve the Hardy inequality on arbitrary smooth functions.
  \end{remark}

\subsection{Localized improvement 
of the Hardy--Garofalo--Lanconelli inequality}\label{ss:impro}
Let $A$ be a magnetic vector potential that is either 
the Aharonov-Bohm potential on $\mathbb H^1\setminus \cZ$ 
(i.e., $A = \alpha d\varphi\mod\omega$, $\alpha\in \mathbb R\setminus\bZ$) 
or smooth. 
We consider the operator $\co$ obtained as the Friedrichs extension of \( -\Delta_A - \frac{r^2}{\rho^4}\) with initial domain 
\(C^\infty_c(\mathbb H^1)\). 
(Observe that $r^2/\rho^4\le 1/\rho^2$ is integrable near the origin.)
The associated form is
\begin{equation*}
\cf(u) := \int_{\bH^1}  \left[|(\dH+iA)u|^2 - \frac{r^2}{\rho^4}|u|^2 \right]\,dq
\,, \qquad
\forall u \in C^\infty_c(\mathbb H^1)
\,.
\end{equation*}
When \(A=0\), the operator \(\co\) is critical, 
due to the sharpness of the Hardy-type inequality \eqref{eq:GL}. Our aim in this section is to show that this is never the case if \(A\neq 0\). Namely, we prove Theorem~\ref{thm:impro}.

We start with a straightforward result. 
\begin{lem}
Let \(\Omega\subset \mathbb H^1\) be an open set
and let \(\cf^\Omega(u)\) be the restriction of \(\cf\) to \(\Omega\), 
that is,
\begin{equation}
\cf^\Omega(u) := \int_{\Omega}  \left[|(\dH+iA)u|^2 - \frac{r^2}{\rho^4}|u|^2 \right]\,dq.
\end{equation}
We let \(\co^\Omega\) be the self-adjoint operator on 
\(L^2(\Omega)\) associated with the closure of \(\cf^\Omega\) 
with domain \(C^\infty_c(\bH^1)\). 
Then, \(\co \ge \co^\Omega\) in the sense of quadratic forms.
\label{lem:lower-bound}
\end{lem}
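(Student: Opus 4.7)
The plan is to interpret ``$\co\ge \co^\Omega$ in the sense of quadratic forms'' as the statement that for every $u$ in the form domain of $\co$, the restriction $u|_\Omega$ lies in the form domain of $\co^\Omega$ and $\cf(u)\ge \cf^\Omega(u|_\Omega)$. This is exactly the form-comparison that will be combined with a positivity bound $\co^\Omega\ge c>0$ on $L^2(\Omega)$ in Section~\ref{ss:impro} to deduce $\co\ge c\,\chi_\Omega$ on $L^2(\mathbb H^1)$, which is the content of Theorem~\ref{thm:impro}.

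First I would reduce the verification to a form core, namely $C^\infty_c(\mathbb H^1)$ in the smooth magnetic case and $C^\infty_c(\mathbb H^1\setminus\cZ)$ in the Aharonov--Bohm case (the latter being a core for $\co$ thanks to Proposition~\ref{prop:core}). On this core the two forms differ by the exterior integral
\[
\cf(u)-\cf^\Omega(u) \;=\; \int_{\mathbb H^1\setminus\Omega}\!\left[|(\dH+iA)u|^2-\frac{r^2}{\rho^4}|u|^2\right]dq,
\]
so the whole lemma reduces to showing that this quantity is non-negative for every test function $u$.

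My approach to this key step is to first apply the diamagnetic inequality~\eqref{eq:diamagnetic} to drop the magnetic field (replacing $u$ by $v:=|u|$), and then invoke a Picone-type factorization based on the positive function $w:=\rho^{-1}$, which satisfies $-\Delta w=(r^2/\rho^4)w$ on $\mathbb H^1\setminus\{0\}$ (a direct computation using $\Delta\rho^\alpha=\alpha(\alpha+2)(r^2/\rho^4)\rho^\alpha$ with homogeneous dimension $Q=4$). Writing $v=wf$ yields a pointwise identity whose bulk part is the manifestly non-negative quantity $w^2|\dH f|^2$ plus a horizontal-divergence term; integrating over $\mathbb H^1\setminus\Omega$, the latter reduces by Stokes to a boundary integral on $\partial\Omega$.

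The main obstacle will be controlling this boundary integral, which after simplification is proportional to $\int_{\partial\Omega}(v^2/\rho)\,\dH\rho\cdot\nu_{\mathrm H}\,d\sigma$, with $\nu_{\mathrm H}$ the horizontal component of the outward Euclidean normal to $\Omega$. For a general Lipschitz $\Omega$ this has no \emph{a priori} definite sign, so the argument will require either a super-solution adapted to the geometry of $\partial\Omega$ or an approximation procedure producing test functions whose trace on $\partial\Omega$ vanishes. I expect the form-theoretic reduction in the first two paragraphs to be routine and the sign analysis of this boundary term to be the essential technical point of the proof.
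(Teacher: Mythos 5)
The paper offers no proof of this lemma at all (it is introduced as ``a straightforward result''), the implicit argument being Neumann bracketing: since $\cf(u)=\cf^\Omega(u|_\Omega)+\int_{\bH^1\setminus\Omega}\bigl[|(\dH+iA)u|^2-\frac{r^2}{\rho^4}|u|^2\bigr]dq$, everything reduces to the non-negativity of the exterior integral. Your reduction to exactly this point, the use of the diamagnetic inequality, and the ground-state substitution $v=\rho^{-1}f$ (using $-\Delta\rho^{-1}=\frac{r^2}{\rho^4}\rho^{-1}$) are all correct, and you have put your finger on the decisive object: the boundary term $\int_{\partial\Omega}f^2\rho^{-3}\,\nH\rho\cdot\nu_{\mathrm H}\,d\sigma$.

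The gap is that you postpone the sign analysis of that term, and it cannot be carried out in the stated generality: for a general open $\Omega$ the inequality $\cf(u)\ge\cf^\Omega(u|_\Omega)$ is \emph{false}, because the operator shifted by the critical Hardy weight need not be non-negative on a component of $\bH^1\setminus\Omega$ with Neumann conditions. Concretely, take $A=0$, $\Omega=\{1<\rho<2\}$, and $u_\varepsilon=\rho^{-1}\phi(\rho)g_\varepsilon(\rho)$ with $\phi\equiv1$ on $(0,1]$, $\phi$ decreasing to $0$ on $[1,2]$, and $g_\varepsilon$ a logarithmic cut-off vanishing near the origin and equal to $1$ for $\rho\ge\varepsilon$. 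The region $\{\rho>2\}$ contributes nothing, while the coarea formula together with the homogeneity identity $\int_{\{\rho=t\}}r^2\,|\nabla_{\R^3}\rho|^{-1}d\sigma=c\,t^{5}$ gives
\begin{equation*}
\int_{\{\rho<1\}}\Bigl[|\dH u_\varepsilon|^2-\frac{r^2}{\rho^4}|u_\varepsilon|^2\Bigr]dq
= c\int_0^1 t\,|g_\varepsilon'(t)|^2\,dt - c\,g_\varepsilon(1)^2
= \frac{c}{\log(1/\varepsilon)}-c\;\xrightarrow[\varepsilon\to 0]{}\;-c<0,
\end{equation*}
so $\cf(u_\varepsilon)<\cf^\Omega(u_\varepsilon)$ for $\varepsilon$ small. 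What \emph{is} true, and what your Picone computation already proves with no further work, is the lemma for domains with $\nH\rho\cdot\nu_{\mathrm H}\ge0$ on $\partial\Omega$ (e.g.\ Koranyi balls $\{\rho<R\}$): then on every component $D$ of $\bH^1\setminus\Omega$ the boundary term has the favourable sign, there is no contribution at the origin because $f=\rho u$ vanishes there for $u$ smooth, and the exterior integral is non-negative. This restricted version still yields Theorem~\ref{thm:impro}, since one may enlarge $\Omega$ to a Koranyi ball $\Omega'\supset\Omega$ (the hypotheses on $A$ persist) and use $\chi_\Omega\le\chi_{\Omega'}$. In short: your plan is the right one, but the ``essential technical point'' you defer is an actual restriction on $\Omega$, not a technicality, and the lemma as stated needs such a restriction (or a reformulation) to be correct.
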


Recall that \(L^2(\mathbb H^1) = L^2(\Omega)\oplus L^2(\mathbb H^1\setminus \Omega)\), and let \(\Theta\) be the null operator on \(L^2(\mathbb H^1\setminus \Omega)\), i.e., \(\Theta u = 0\) for all \(u\in L^2(\mathbb H^1\setminus\Omega)\).
Then, since \(\co^\Omega\) corresponds to the restriction of \(\co\) to \(\Omega\), with Neumann boundary conditions, 
letting $\tilde{P}_A^\Omega$
be the self-adjoint operator on \(L^2(\Omega)\) associated with the closure of \(\cf^\Omega\) with domain \(C^\infty(\Omega)\), we have
\begin{equation}
{\co^\Omega} = \tilde{P}_A^\Omega \oplus \Theta.
\end{equation}
Thus, henceforth, we will identify \({\co^\Omega}\) 
with $\tilde{P}_A^\Omega$, with abuse of notation.
The main observation is then the following. 
\begin{proposition}
\label{prop:discrete}
Let \(\Omega\subset\bH^1\) be a bounded open set with Lipschitz boundary. 
Let $A$ be a magnetic vector potential that is either Aharonov--Bohm on $\Omega\setminus \cZ$ or smooth. Then, \({\co^\Omega}\) has discrete spectrum.
\end{proposition}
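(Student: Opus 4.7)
The plan is to show that $\co^\Omega$ has compact resolvent; equivalently, that $\cf^\Omega$ is closed and bounded below, and that its form domain embeds compactly into $L^2(\Omega)$.

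The first step is to identify the form domain of $\cf^\Omega$ with the magnetic Sobolev space $W^1_A(\Omega)$ (with Neumann-type boundary conditions) and to check lower semi-boundedness. The subtracted potential $V := r^2/\rho^4$ is bounded on any compact subset of $\bH^1\setminus\{0\}$; its singularity at $0$ is controlled via a cutoff $\chi\in C^\infty_c(\Omega)$ around the origin together with the Garofalo--Lanconelli inequality~\eqref{eq:GL} applied to $\chi u$ extended by zero. The diamagnetic inequality~\eqref{eq:diamagnetic} in the smooth case, and Theorem~\ref{thm:lw2-heis} combined with the pointwise bound $V \leq 1/r^2$ in the Aharonov--Bohm case, yield some $C=C(A,\Omega)>0$ such that
\[
  \cf^\Omega(u)+C\lVert u\rVert_{L^2(\Omega)}^2 \ \geq\ \tfrac{1}{2}\, Q_A^\Omega(u) \qquad \forall u\in W^1_A(\Omega).
\]
This gives closedness of $\cf^\Omega$, with form domain and graph norm equivalent to those of $Q_A^\Omega$.

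For the compact embedding, in the \emph{smooth} case the boundedness of $|A|$ on $\bar\Omega$ and the triangle inequality give the continuous inclusion $W^1_A(\Omega)\hookrightarrow W^1(\Omega)$ into the horizontal Sobolev space; the classical Rellich--Kondrachov-type theorem on bounded Lipschitz domains in $\bH^1$ (see e.g.\ \cite{Folland73,HK2000}) then yields the desired compactness $W^1(\Omega)\hookrightarrow L^2(\Omega)$.

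The \emph{Aharonov--Bohm} case is the main obstacle: since $A$ is singular on $\cZ\cap\Omega$, the inclusion into $W^1(\Omega)$ fails. I would argue instead via Theorem~\ref{thm:lw2-heis}: for any sequence $(u_n)$ bounded in form norm,
\[
  \int_{\Omega\cap\{r<\epsilon\}}|u_n|^2\,dq \ \leq\ \epsilon^2\int_{\bH^1\setminus\cZ}\frac{|u_n|^2}{r^2}\,dq \ \leq\ C\epsilon^2,
\]
so the $L^2$-mass of $(u_n)$ near $\cZ$ is uniformly small. On each Lipschitz subdomain $\Omega_\epsilon := \Omega\cap\{r>\epsilon\}$ the potential $A$ is smooth, so the smooth-case argument extracts an $L^2(\Omega_\epsilon)$-convergent subsequence. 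A diagonal extraction along $\epsilon=1/k$, combined with the uniform tightness near $\cZ$, then produces a subsequence converging in $L^2(\Omega)$. The Aharonov--Bohm Hardy inequality of Theorem~\ref{thm:lw2-heis} thus plays here the role that the $L^\infty$ bound on $|A|$ plays in the smooth case, providing both the semi-boundedness of the form and the compactness near the singular set.
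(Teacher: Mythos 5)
Your overall strategy (closedness plus compact embedding of the form domain) is the right shape, and your treatment of the Aharonov--Bohm singularity via tightness near $\cZ$ is a legitimate variant of what the paper does in Lemma~\ref{l:RK}. But the first step contains a genuine gap that invalidates the rest: the claimed bound
\begin{equation*}
  \cf^\Omega(u)+C\norm{u}_{L^2(\Omega)}^2 \;\geq\; \tfrac12\,Q_A^\Omega(u),
  \qquad\text{equivalently}\qquad
  \int_\Omega \frac{r^2}{\rho^4}\abs{u}^2\,dq \;\leq\; \tfrac12\,Q_A^\Omega(u)+C\norm{u}_{L^2(\Omega)}^2,
\end{equation*}
is false whenever $0\in\Omega$. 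The potential $r^2/\rho^4$ is homogeneous of degree $-2$ with respect to the dilations $\delta_\lambda(x,y,z)=(\lambda x,\lambda y,\lambda^2 z)$, so testing on $u_\lambda=\lambda^{2}\,u\circ\delta_\lambda$ with $u$ supported near the origin and letting $\lambda\to\infty$ shows that no relative form bound $a<1$ with an $L^2$ error term is possible: the sharp constant in the Garofalo--Lanconelli inequality~\eqref{eq:GL} is $1$, and on a bounded $\Omega\ni 0$ with Neumann conditions it is actually \emph{attained} by $\rho^{-1}$ (Proposition~\ref{prop:simple}). Neither the diamagnetic inequality nor Theorem~\ref{thm:lw2-heis} rescues this: the latter only gives $\int_\Omega r^2\rho^{-4}\abs{u}^2\leq\int_\Omega r^{-2}\abs{u}^2\leq d(\alpha,\bZ)^{-2}Q_A^\Omega(u)$, and $d(\alpha,\bZ)^{-2}\geq 4$. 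Consequently the form domain of $\cf^\Omega$ is \emph{not} $W^1_A(\Omega)$ with an equivalent norm; it is strictly larger. Concretely, $\rho^{-1}$ belongs to the form domain of $\cfo^\Omega$ (with $\cfo^\Omega(\rho^{-1})=0$), yet $\rho^{-1}\notin W^1(\Omega)$, since $\abs{\nH\rho^{-1}}^2=r^2/\rho^6$ and $\int_{B_\delta} r^2\rho^{-6}\,dq=+\infty$ by homogeneity (the integral over each dyadic annulus $B_{2^{-k}}\setminus B_{2^{-k-1}}$ is a fixed positive constant). Your compact-embedding argument is then applied to the wrong space: a sequence bounded in the form norm of $\cf^\Omega$ need not be bounded in $W^1_A(\Omega)$, so neither the Rellich--Kondrachov step nor the tightness estimate near $\cZ$ is available for it.

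This is precisely the difficulty the paper's proof is built to avoid: it performs a ground-state substitution with weight $\phi=\rho^{-2}$, under which $\co^\Omega$ becomes unitarily equivalent to the operator associated with the purely magnetic \emph{weighted} form $\ell_\phi(u)=\int_\Omega\abs{(\dH+iA)u}^2\phi\,dq$ on $L^2(\phi)$ --- the critical potential $-r^2/\rho^4$ is absorbed entirely, with no lower-order remainder. Compactness is then proved for the embedding $W^1_A(\phi)\hookrightarrow L^2(\phi)$, splitting a bounded sequence by a cutoff around the origin and controlling the inner piece with the unweighted Hardy inequality~\eqref{eq:hardy-unw} (this is where Lemma~\ref{l:RK}, i.e.\ your smooth/Aharonov--Bohm dichotomy, enters --- but only on the unweighted level, away from the degeneracy of $\phi$). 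A secondary remark: your assertion that in the Aharonov--Bohm case ``the inclusion into $W^1(\Omega)$ fails'' is also incorrect --- Lemma~\ref{l:RK} shows $W^1_A(\Omega)\subset W^1(\Omega)$ precisely by using Theorem~\ref{thm:lw2-heis} to bound $\norm{u/r}_{L^2(\Omega)}$ and hence $\norm{\Phi u}_{L^2(\Omega)}$ --- though your alternative diagonal argument over $\Omega\cap\{r>\epsilon\}$ would be an acceptable substitute for that particular sub-step.
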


Let $B_\delta:=\{q\in \bH^1 : \rho(q)<\delta\}$, for $\delta>0$.
We will need the following ``unweighted'' Hardy inequality 
(see, e.g., \cite[Lem.~2.1]{BCX05}):
\begin{equation}\label{eq:hardy-unw}
\int_{B_\delta} \frac{|v|^2}{\rho^2}\,dq \le C \int_{B_\delta}|\dH v|^2\,dq, \qquad v\in W^1(B_\delta).
\end{equation}
The interest here with respect to
 Hardy inequality~\eqref{eq:GL} is the absence of weight 
on the left-hand side. 
However, this comes at the cost of not having an explicit constant \(C>0\).

In the following, we denote by $W^1_A(\Omega)$ 
the form domain of $-\Delta_A$ in $L^2(\Omega)$
with Neumann boundary conditions. That is,
\begin{equation*}
    W^1_A(\Omega) := \left\{ u\in L^2(\Omega) :\: Q_A(u)<+\infty \right\}.
\end{equation*}
The following embedding result for $W^1_A(\Omega)$ will be crucial. Let us observe that the Aharonov--Bohm part of Lemma~\ref{l:RK}  relies on the validity of the improved Hardy inequality from the center presented in  Theorem~\ref{thm:lw2-heis}.
\begin{lem}
\label{l:RK}
Let $\Omega \subset \mathbb H^1$ be a bounded domain 
with Lipschitz boundary.
Let $A$ be a magnetic vector potential on $\Omega$ 
that is either of Aharonov--Bohm type on $\Omega\setminus \cZ$ or smooth. Then, $W^1_A(\Omega)$ is compactly embedded into $L^2(\Omega)$. 
\end{lem}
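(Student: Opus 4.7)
When $A$ is smooth, its coefficients are bounded on the compact closure $\overline\Omega$, so the identity $\dH u = (\dH+iA)u - iAu$ together with the triangle inequality yields the continuous embedding $W^1_A(\Omega)\hookrightarrow W^1(\Omega)$. The compact embedding $W^1(\Omega)\hookrightarrow L^2(\Omega)$ for the horizontal Sobolev space on bounded Lipschitz domains in $\bH^1$, i.e.\ the Rellich--Kondrachov theorem in the sub-elliptic setting (see, e.g., Garofalo--Nhieu or Franchi--Serapioni--Serra Cassano), then concludes this case.

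The Aharonov--Bohm case is more delicate since $A=\alpha\,d\varphi\mod\omega$ is singular on the $z$-axis $\cZ$. The plan is to cut off near $\cZ$, using the magnetic Hardy inequality of Theorem~\ref{thm:lw2-heis} to control both the loss of mass and the commutator produced by the cut-off, and then reduce on the complement to the smooth case already established. Let $(u_n)\subset W^1_A(\Omega)$ be bounded. By Theorem~\ref{thm:lw2-heis},
\[
  d(\alpha,\bZ)^2 \int_\Omega \frac{|u_n|^2}{r^2}\,dq \;\leq\; Q_A(u_n) \;\leq\; C,
\]
uniformly in $n$. Fix a smooth radial cut-off $\chi_\varepsilon=\chi_\varepsilon(r)$ equal to $1$ for $r\le\varepsilon$, to $0$ for $r\ge 2\varepsilon$, with $|\dH\chi_\varepsilon|\le C\varepsilon^{-1}$. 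The Hardy bound above immediately implies $\|\chi_\varepsilon u_n\|_{L^2(\Omega)}\le C\varepsilon$ uniformly in $n$.

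Setting $v_n^\varepsilon := (1-\chi_\varepsilon)u_n$, which is supported in $\{r\ge\varepsilon\}$, and expanding $(\dH+iA)v_n^\varepsilon = (1-\chi_\varepsilon)(\dH+iA)u_n - u_n\dH\chi_\varepsilon$, the same Hardy inequality yields the uniform bound
\[
  \|u_n\,\dH\chi_\varepsilon\|_{L^2}^2
  \;\le\; \frac{C^2}{\varepsilon^2}\int_{\{r\le 2\varepsilon\}\cap\Omega}|u_n|^2\,dq
  \;\le\; C',
\]
so that $(v_n^\varepsilon)_n$ is bounded in $W^1_A(\Omega)$. Since $A$ is smooth and uniformly bounded on the support of $v_n^\varepsilon$, the smooth case applies and $(v_n^\varepsilon)_n$ admits an $L^2(\Omega)$-convergent subsequence. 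A standard diagonal extraction along $\varepsilon_k=1/k$, combined with the uniform smallness $\|\chi_{\varepsilon_k}u_n\|_{L^2}\to 0$, then produces a subsequence of $(u_n)$ that is Cauchy in $L^2(\Omega)$, proving the compactness.

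The hard part will be the commutator estimate on the annulus $\{\varepsilon\le r\le 2\varepsilon\}$: its measure shrinks as $\varepsilon\downarrow 0$, yet $|\dH\chi_\varepsilon|$ grows like $\varepsilon^{-1}$, so a priori $u_n\,\dH\chi_\varepsilon$ would blow up. Absorbing this factor into the Hardy weight $r^{-2}$ (which requires precisely the non-integer flux condition $d(\alpha,\bZ)>0$ of Theorem~\ref{thm:lw2-heis}) is what makes the argument close, and this is also the reason the lemma relies in an essential way on the Aharonov--Bohm improvement of the free sub-Laplacian.
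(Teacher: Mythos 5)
Your proof is correct, and the smooth case coincides with the paper's argument (bound $A$ on $\overline\Omega$, deduce $W^1_A(\Omega)\hookrightarrow W^1(\Omega)$, invoke the sub-elliptic Rellich--Kondrachov theorem). In the Aharonov--Bohm case, however, you take a genuinely different route. The paper does not localize at all: it uses the magnetic Hardy inequality of Theorem~\ref{thm:lw2-heis} once, to get $\|u/r\|_{L^2(\Omega)}^2\le Q_A(u)/d(\alpha,\bZ)^2$, and since $|A|=|\alpha|/r$ this bounds $\|Au\|_{L^2(\Omega)}$ directly, so that $\|\Phi u\|_{L^2(\Omega)}\le\sqrt{Q_A(u)}+|\alpha|\,\|u/r\|_{L^2(\Omega)}$ and the \emph{same} global embedding $W^1_A(\Omega)\hookrightarrow W^1(\Omega)$ holds verbatim in the singular case; compactness then follows from the one Rellich--Kondrachov application, with no cut-off and no diagonal extraction. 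Your version instead splits $u_n=\chi_\varepsilon u_n+(1-\chi_\varepsilon)u_n$, uses the Hardy weight to show the piece near $\cZ$ is $O(\varepsilon)$ in $L^2$ uniformly in $n$ and to absorb the $\varepsilon^{-1}$ commutator term on the annulus, and then diagonalizes. Both arguments hinge on exactly the same input (Theorem~\ref{thm:lw2-heis} with $d(\alpha,\bZ)>0$); the paper's is shorter and yields the stronger intermediate fact that $W^1_A(\Omega)$ embeds continuously into $W^1(\Omega)$, whereas your localization is somewhat more robust (it would survive with only a Hardy inequality valid near the axis) at the cost of the extra bookkeeping. One stylistic remark: your closing paragraph announces the commutator estimate as ``the hard part'' still to be done, but you have in fact already carried it out two paragraphs earlier; in a final write-up that sentence should be folded into the estimate itself rather than left as a forward reference.
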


\begin{proof} We show that $W^1_A(\Omega)\subset W^1(\Omega)$. The statement then follows by the Rellich--Kondrachov theorem for sub-Laplacians \cite[Lem.~4.3]{FPR20}. 

Considering cylindrical coordinates, we have that $v\in C^\infty(\Omega)\cap L^2(\Omega)$ belongs to $W^1_A(\Omega)$ if
\begin{equation}
Q_A(v)=\int_\Omega |\partial_rv|^2r\,dr d\varphi dz+\int_\Omega\left|\frac{1}r\partial_\varphi v+\frac{r}{2}\partial_zv-iA v\right|^2r\,dr d\varphi dz<+\infty.
\end{equation}
In particular, if $v\in L^2(\Omega)$ is such that $Q_A(v)<+\infty$, we immediately have that $\|\partial_rv\|_{L^2(\Omega)}<\infty$. 
To conclude the proof we are left to show that $\|\Phi v\|_{L^2(\Omega)} <\infty$, where $\Phi$ is defined in \eqref{eq:RPhi}. 

In the case where $A$ is smooth on $\Omega$, the statement follows by observing that $\|A\|_{L^\infty(\Omega)}<\infty$, thus implying $\|\Phi v\|_{L^2(\Omega)} \le \sqrt{Q_A(v)} +\|Av\|_{L^2(\Omega)}<\infty$.

Otherwise, if $A$ is an Aharonov--Bohm potential, by Proposition~\ref{prop:AB} we assume without loss of generality that 
$A= \alpha \, d\varphi$, 
with $\alpha\in \mathbb R\setminus\bZ$, so that
\begin{equation}
Q_A(v)=\int_\Omega |\partial_rv|^2r\,dr d\varphi dz+\int_\Omega \frac{|\partial_\varphi v+\frac{r^2}{2}\partial_zv-i\alpha v|^2}{r^2}r\,dr d\varphi dz.
\end{equation}
Then, by Theorem~\ref{thm:lw2-heis}, we have that
\begin{equation}
\left\|\frac{v}{r}\right\|_{L^2(\Omega)}^2\leq\frac{Q_A(v)}{\operatorname{dist}(\alpha,\mathbb Z)^2}.
\end{equation}
In particular, it holds that $\|\Phi v\|\leq \sqrt{ Q_A(v)}+ \alpha\|v/r\|_{L^2(\Omega)}<+\infty$, thus concluding the proof.\end{proof}

\begin{proof}[Proof of Proposition~\ref{prop:discrete}]
The proof is an adaptation of the Euclidean one, presented e.g.\ in \cite{Escobedo1987}. 
Let \(L^2:=L^2(\Omega)\) and \(W_A^1:=W^1_A(\Omega)\). 
We start by recasting the problem in a weighted space. Let \(\phi = \rho^{-2}\) and consider the unitary transformation \(T: L^2\to L^2(\phi)\) defined by \(Tv = \phi^{-1/2}v\). Here, we denoted by \(L^2(\phi)\) the space of functions on \(\Omega\) that are square integrable with respect to the measure \(\phi(q)\,dq\).

Observe that by~\eqref{eq:m-subLapl}, for any $u,v\in C^\infty(\Omega)$ we have that
\begin{equation*}
    -\Delta_A(uv)=-u\Delta_Av-2\nabla_{\!A} v\cdot\nH u-v\sublap u,
\end{equation*}
where $\nabla_{\!A}:=\nH+iA$ with $\nH$ being the horizontal gradient and $A$ being identified with the horizontal vector field $A=A_xX+A_yY$.
Letting $L_\phi = T\circ \co^\Omega \circ T^{-1}$, we deduce that for any $u\in C^\infty(\Omega)$ one has 
\begin{equation*}
    L_\phi u 
    =-\Delta_Au-\nabla_{\!A}u\cdot(\phi^{-1}\nH\phi)\\
    =-\frac{1}{\phi}\left[X_A(\phi X_Au)+Y_A(\phi Y_Au)\right].
\end{equation*}
Here, we used the fact that $\Delta(\phi^{1/2})=-r^2/\rho^5$, which follows from direct computations.
Thus, \(\co^\Omega\) is unitarily equivalent to the operator \(L_\phi\) on \(L^2(\phi)\), and hence it will suffice to show the discreteness of the spectrum for the latter. 
In order to do so, we set to show that the form domain of \(L_\phi\) compactly embeds in \(L^2(\phi)\). 

The form associated with \(L_\phi\) on \(L^2(\phi)\) is
\begin{equation*}
\ell_\phi(u) := \int_\Omega |(\dH+iA) u|^2\phi\,dq.
\end{equation*}
We let \(W^1_A(\phi)\) be the form domain of \(\ell_\phi\) endowed with the norm 
$$
  \|u\|_{W^1(\phi)} := \sqrt{\|u\|_{L^2(\phi)}^2 + \ell_\phi(u)}
  \,.
$$
Observe that, due to the boundedness of \(\Omega\), there exists \(a>0\) such that \(\phi\ge a\) on \(\Omega\). Thus, \(W^1_A(\phi)\subset W^1_A\). Recall that, by Lemma~\ref{l:RK}, \(W_A^1\) compactly embeds in \(L^2\).
If \(0\notin \Omega\), then \(\phi\) is also bounded from above, and thus \(W^1_A(\phi)=W^1_A\) and \(L^2(\phi)=L^2\), which easily yields the statement. Henceforth we will thus assume \(0\in \Omega\).

Let \((u_k)_k\subset W^1_{A}(\phi)\) be weakly convergent in \(W^1_{A}(\phi)\) to \(u\). We can assume \(\|u_k\|_{W^1_A(\phi)}\le 1\). Since \(W_{A}^1(\phi)\subset W_{A}^1\), and the latter is compactly embedded in \(L^2\) by assumption, we have that \(u_k\rightarrow u\) in \(L^2_{\text{loc}}\).

Fix $\varepsilon>0$. Let \(\delta\in(0,1)\) be a constant to be fixed later, and consider a cut-off function \(\chi:\Omega\to [0,1]\) such that \(\supp\chi\subset B_\delta\) and \(\chi \equiv 1\) on \(B_{\delta/2}\). Let us define \(u^1_k := \chi u_k\), \(u_k^2 := (1-\chi)u_k\), \(u^1:=\chi u\), and \(u^2:=(1-\chi)u\). Observe that \(\|u^i_k\|_{W_A^1(\phi)}\le 2\) for \(i=1,2\) and \(k\in\mathbb N\).

Since the \(u^2_k\)'s are supported outside \(B_{\delta/2}\), we have that \(\phi\le 4/\delta^2\) on their support. Thus, we readily obtain
\begin{equation*}
\int_{\Omega} |u_k^2-u^2|^2 \phi\,dq \le \frac{4}{\delta^2}\int_{\Omega\setminus B_{\delta/2}} |u_k-u|^2.
\end{equation*}
In particular, since \(u_k\rightarrow u\) in \(L^2_{\text{loc}}\),  there exists $\bar k \in \mathbb N$ also depending on $\delta$ such that for $k\geq \bar k$ we have $\|u_k^2-u^2\|_{L^2(\phi)}^2<\varepsilon/2$. 

To treat the \(u^1_k\)'s, we apply the  ``unweighted'' Hardy inequality~\eqref{eq:hardy-unw}. 
Indeed, recalling that \(\phi = \rho^{-2}\), this allows to estimate, independently on \(k\in\mathbb N\), 
\begin{equation*}
\int_{\Omega} |u_k^1-u^1|^2 \phi\,dq \le C \int_{B_\delta}|\dH (u_k^1-u^1)|^2\,dq \le \delta^2C \int_{B_\delta}|\dH (u_k^1-u^1)|^2\phi\,dq \le 2\delta^2C.
\end{equation*}
Therefore, choosing $\delta<\sqrt{\varepsilon/(4C)}$ the last inequality implies $\|u_k^1-u^1\|_{L^2(\phi)}^2<\varepsilon/2$.
Finally, we have proved that with this choice of $\delta$, for $k\geq\bar k$ we have that 
\begin{equation*}
\begin{split}
\int_{\Omega} |u_k-u|^2 \phi\,dq
&\le \int_{\Omega} |u_k^1-u^1|^2 \phi\,dq  + \int_{\Omega} |u_k^2-u^2|^2 \phi\,dq
\le \varepsilon
\end{split}
\end{equation*}
This completes the proof.
\end{proof}

We will also need the following result, about the ground state of the unperturbed operator $P_0^\Omega$.

\begin{proposition}\label{prop:simple}
    Let $\Omega\subset \bH^1$ be a bounded and connected open set with Lipschitz boundary. Then, $0$ is the smallest eigenvalue of $P_0^\Omega$, it is simple, and the corresponding  ground state is proportional to $\rho^{-1}$.
\end{proposition}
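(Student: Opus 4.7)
My plan is to construct the ground state explicitly as $\phi := \rho^{-1}$ and derive the non-negativity of the form through a Picone-type ground-state factorization; simplicity will then drop out from the factorization.

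\emph{Pointwise ground state.} I would first verify that $\phi := \rho^{-1}$ solves $(-\Delta - r^2/\rho^4)\phi = 0$ classically on $\bH^1\setminus\{0\}$. Since $\rho^{-2}$ is (up to a constant) the fundamental solution of the sub-Laplacian \cite{follandEstimates1974}, one has $\Delta(\rho^{-2}) = 0$ away from the origin. Expanding yields the identity $\Delta\rho = 3|\nabla\rho|^2/\rho = 3r^2/\rho^3$, from which a direct computation gives $\Delta\rho^{-1} = -r^2/\rho^5 = -V\rho^{-1}$, where $V := r^2/\rho^4$.

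\emph{Ground-state factorization.} For $u \in C^\infty(\overline{\Omega})$ I substitute $u = v/\rho$ with $v := u\rho$. Using the divergence-free identity $X(\rho^{-3}X\rho) + Y(\rho^{-3}Y\rho) = 0$ (a consequence of $\Delta\rho = 3r^2/\rho^3$ and the fact that $X$, $Y$ are divergence-free vector fields), one has the pointwise identity
\begin{equation*}
  |\nabla u|^2 - V u^2
  = \rho^{-2}|\nabla v|^2 - X\bigl(v^2\rho^{-3}X\rho\bigr) - Y\bigl(v^2\rho^{-3}Y\rho\bigr).
\end{equation*}
Integrating over $\Omega$, applying the Euclidean divergence theorem and exploiting the natural boundary condition associated with the Neumann closure of the form (together with a careful limiting argument on small Koranyi spheres around the origin when $0\in\Omega$, using the decay $\rho^{-3}|\nabla\rho|=r/\rho^4$), the divergence contribution drops out and I am left with
\begin{equation*}
  \cfo^\Omega(u) = \int_\Omega \rho^{-2}|\nabla v|^2\,dq \ge 0,
\end{equation*}
with equality iff $\nabla v \equiv 0$ on $\Omega$. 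This gives $P_0^\Omega \geq 0$.

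\emph{Eigenfunction, minimality, and simplicity.} Next, I would show $\phi \in \dom(\cfo^\Omega)$ via an approximation $\phi_n \in C^\infty_c(\bH^1\setminus\{0\})$ obtained by truncating $\phi$ both near the origin and outside a neighborhood of $\overline\Omega$, analogously to the cutoff construction in Proposition~\ref{prop:core}. Using the pointwise cancellation $|\nabla\phi|^2 - V\phi^2 \equiv 0$, one verifies that $\phi_n \to \phi$ in $L^2(\Omega)$ and that $\cfo^\Omega(\phi_n - \phi_m) \to 0$, yielding $\cfo^\Omega(\phi) = 0$. By the discreteness of the spectrum from Proposition~\ref{prop:discrete} (applied with $A = 0$) combined with $P_0^\Omega \geq 0$, the value $0$ is attained as the smallest eigenvalue, with $\phi$ as an eigenfunction. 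For simplicity, if $\psi$ is any eigenfunction with eigenvalue $0$, then $\cfo^\Omega(\psi) = 0$ forces $\nabla(\psi\rho) \equiv 0$ on $\Omega$ by the factorization; connectedness of $\Omega$ then yields $\psi\rho \equiv \text{const}$, so $\psi$ is proportional to $\rho^{-1}$.

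The main obstacle I expect is the rigorous vanishing of the boundary contribution in the factorization step: on a general Lipschitz $\partial\Omega$ the horizontal distribution may be characteristic, and the limiting procedure around the singularity of $\rho^{-1}$ at the origin (when $0 \in \Omega$) must be handled with some care, exploiting the horizontal divergence-free property of $\rho^{-3}\nabla\rho$ together with its fast decay on small Koranyi spheres, and the compatibility of the natural Neumann boundary condition with the factorization on the outer boundary.
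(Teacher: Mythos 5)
Your plan is, in substance, the paper's own proof: the authors' identity \eqref{eq:cfo} is exactly your ground-state factorization $\cfo(u)=\int\rho^{-2}\,|\nH(u\rho)|^2\,dq$ (note $r^3/\rho^4=R\rho/\rho$ and $4rz/\rho^4=\Phi\rho/\rho$), membership of $\rho^{-1}$ in the form domain is likewise obtained there by an explicit two-sided cut-off approximation exploiting the pointwise cancellation $|\nH\rho^{-1}|^2=\tfrac{r^2}{\rho^4}|\rho^{-1}|^2$, and simplicity is read off from $\nH(\psi\rho)\equiv0$ on a connected set. Your preparatory identities ($\Delta\rho=3|\nH\rho|^2/\rho$, $\Delta\rho^{-1}=-r^2\rho^{-5}$, the horizontal divergence-free property of $\rho^{-3}\nH\rho$) are all correct.

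The obstacle you flag at the end is, however, not a removable technicality: the boundary contribution in the factorization does \emph{not} drop out on the Neumann form domain, and the factorized identity is genuinely false there. Test $u\equiv1$, an admissible Neumann trial function: $\cfo^\Omega(1)=-\int_\Omega r^2\rho^{-4}\,dq<0$, while $\int_\Omega\rho^{-2}|\nH(1\cdot\rho)|^2\,dq=+\int_\Omega r^2\rho^{-4}\,dq>0$; in particular the realization obtained by closing $\cfo^\Omega$ on $C^\infty(\overline\Omega)$ is not even non-negative, so $0$ cannot be its smallest eigenvalue. The natural boundary condition cannot rescue this step: it reads $n\cdot\nH u=0$, it is available only for operator-domain (not form-domain) elements, and the term you must kill, $\int_{\partial\Omega}|u|^2\rho^{-1}\,n\cdot\nH\rho\,d\sigma$, involves the normal derivative of $\rho$, not of $u$. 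The repair --- and the reading under which the proposition delivers what Theorem~\ref{thm:impro} needs --- is to \emph{define} the comparison operator on $\Omega$ directly through the substituted form, i.e.\ as the Neumann realization of $v\mapsto\int_\Omega\rho^{-2}|\nH v|^2\,dq$ in $L^2(\Omega,\rho^{-2}dq)$ (the operator $L_\phi$ from the proof of Proposition~\ref{prop:discrete}) pulled back by $u\mapsto u\rho$. For that operator non-negativity is manifest, the bracketing against the operator on all of $\bH^1$ holds because your pointwise identity integrates without boundary terms for $u\in C^\infty_c(\bH^1)$ (there only the small Koranyi spheres around the origin require your decay estimate, and that part of your argument is sound), and your attainment and simplicity steps then go through verbatim. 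Be aware that the paper's own write-up glosses over the same point.
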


\begin{proof}
{We start by claiming that $\rho^{-1}\in {\dom(\cfo^\Omega)}$. To prove the claim, we approximate $\rho^{-1}$ by a sequence of smooth functions compactly supported in $\bH^1$ which is Cauchy in the topology of the form $\cfo^\Omega$.

If $0\notin \Omega$, such a sequence can be chosen to be equal to $\rho^{-1}$ in $\Omega$. If $0\in \Omega$, we let $N,M\in \mathbb N$ be so that $\{\rho<1/N\}\subset\Omega\subset \{\rho<M\}$. For any $n\in\mathbb N$, $n\geq N$  and $\varepsilon>0$, we let $u_n^\varepsilon\in C_c(\bH^1)$ be defined as 
\[
u_n^{\varepsilon}(q)=\begin{cases}
0,&\text{if }\rho(q)<1/n^2\text{ or } \rho(q)>M^2,\\
\psi(\log_n(n^2\rho(q)))n^{2+\varepsilon}\rho(q)^{1+\varepsilon},&\text{if }\rho(q)\in[1/n^2,1/n]\\
\rho^{-1}(q),&\text{if }\rho(q)\in(1/n,M],\\
\psi(\log_M(M^2\rho^{-1}))/M,&\text{if }\rho(q)\in (M,M^2].
\end{cases}
\]
where $\psi:[0,1]\to[0,1]$ is a smooth function such that $\psi=0$ in a right neighborhood of~$0$ and $\psi=1$ in a left neighborhood
of~$1$.

Recalling that $|\nabla\rho| = r/\rho$, it is immediate to check that
\[
\int_{\Omega}  \left[\left|\nH(\rho^{-1})\right|^2 - \frac{r^2}{\rho^4}\left|\rho^{-1}\right|^2 \right]\,dq=0.
\]
Observing also that  $\rho^{-1}\in L^2(\Omega)$, straightforward computations then yield the claim since $u_n^\varepsilon$ is a Cauchy sequence in the topology of $\cfo^\Omega$ so that, as $n\to +\infty$, and $\varepsilon\to 0$ we obtain
\[
\begin{split}
\|u_n^\varepsilon-\rho^{-1}\|_{L^2(\Omega)}\to 0,\quad \text{and} \quad \cfo^\Omega(u_n^\varepsilon)\to 0.
\end{split}
\]}

By Proposition~\ref{prop:discrete}, the min-max principle, and the non-negativity of the operator, this proves that the smallest eigenvalue of $P_0^\Omega$ is $0$ and that $\rho^{-1}$ is an associated eigenfunction.

We now provide an argument for the simplicity. 
This is based on the following identity, 
holding for any $u\in {\dom(\cfo^\Omega)}$,
\begin{equation}\label{eq:cfo}
    \cfo(u) = \int_{\bH^1}\left( \left| R u + \frac{r^3}{\rho^4} u \right|^2 + \left| \Phi u+\frac{4rz}{\rho^4} u\right|^2 \right)\,dq.
\end{equation}
Indeed, by developing the squares on the right-hand side
above, 
we obtain
\begin{equation}
    \begin{split}
    \int_{\bH^1}\bigg(\left| R u + \frac{r^3}{\rho^4} u \right|^2 &+ \left| \Phi u+\frac{4rz}{\rho^4} u\right|^2\bigg)\,dq\\
    &= \cfo(u) + 2\left(\int_{\bH^1} \frac{r^2}{\rho^4}|u|^2\, dq+ \Re\int_{\bH^1}\left[ \frac{r^3}{\rho^4} Ru + \frac{4rz}{\rho^4}\Phi u\right] \bar u\, dq \right).
    \end{split}
\end{equation}
Equation \eqref{eq:cfo} follows by observing that the last term in the above vanishes, as can be checked by integration by parts.

Finally, to complete the proof it suffices to observe that $\rho^{-1}$ is the only solution (up to multiplicative constants) of
\begin{equation*}
    Ru + \frac{r^3}{\rho^4} u = 0 
    \quad\text{and}\quad
    \Phi u+\frac{4rz}{\rho^4} u =0.
    \qedhere
\end{equation*}
\end{proof}

We are now ready to prove Theorem~\ref{thm:impro}.
\begin{proof}[Proof of Theorem~\ref{thm:impro}]
By Lemma \ref{lem:lower-bound} it holds \(\co\ge \co^\Omega\), and by Proposition~\ref{prop:discrete} we have that \(\co^\Omega\) has discrete spectrum. Let \(\lambda\) be the smallest eigenvalue of \({\co^\Omega}\), then \(\co \ge \lambda \chi_\Omega\).
Let us shows that \(\lambda>0\) if $A$ is of either type (i) or (ii).
Indeed, this yields the statement with $c(A,\Omega)=\lambda$. Observe that, due to gauge invariance, $\lambda$ depends only on $B$ if $A$ is of type (i). 

We proceed as in \cite[Proposition 5]{K13}, and assume by contradiction that \(\lambda = 0\). Then, the corresponding eigenfunction \(\psi\in L^2(\Omega)\) satisfies
\begin{equation}\label{eq:diamag}
0 = \cf^\Omega(\psi) = \int_{\Omega}  \left[|(\dH+iA)\psi|^2 - \frac{r^2}{\rho^4}|\psi|^2 \right]\,dq\ge \int_\Omega |\dH |\psi||^2 - \int_\Omega |\psi|^2\frac{r^2}{\rho^4}\,dq \ge 0,
\end{equation}
where the second estimate is due to the Garofalo--Lanconelli Hardy inequality~\eqref{eq:GL}, and the first one is due to the diamagnetic inequality~\eqref{eq:diamagnetic}.

By (\ref{eq:diamag}) we thus have that \(|\psi|\) coincide with the ground state of \(P_0^\Omega\). Up to restricting to a connected component of $\Omega$, we can always assume $\Omega$ to be connected. Proposition~\ref{prop:simple} then implies that $\rho^{-1}$ is the unique ground state of $P_0^{\Omega}$.  Hence, up to normalization, we can choose  \(\psi = e^{if} \rho^{-1}\) for smooth real-valued $f$. A direct computation yields \(|(\dH+iA) \psi|^2 = |\dH |\psi||^2 + |(A+\dH f)\psi|^2\). Since the inequalities in \eqref{eq:diamag} are now equalities, this immediately yields
\begin{equation}
\int_\Omega |A+\dH f|^2|\psi|^2\,dq = 0.
\end{equation}
Hence, \(A +\dH f \equiv 0\) on $\Omega$. That is, $A$ is gauge equivalent to the null vector potential on $\Omega$, and in particular the associated magnetic field $B=dA$ vanishes on $\Omega$. This contradicts the assumption that $A$ is of type (i) or (ii), and thus completes the proof. 
\end{proof}

\appendix

\section{Xiao's Hardy inequality for Aharonov--Bohm potentials}
\label{app:xiao}
%
Let us prove the following result, which raises a crucial criticism of the result \cite[Thm.~1.1]{Xiao2015} due to Xiao. 
Observe that this is just an instance of gauge invariance and the fact that the first co-homology group of \(\mathbb H^1\setminus\{0\}\) is trivial.

\begin{theorem}
Consider the following magnetic potential on $\mathbb H^1\setminus\{0\}$:
\begin{equation*}
    A := -\frac{Y\rho}{\rho} \,dx + \frac{X\rho}\rho\, dy,
\end{equation*}
where $\rho(\xi,z) := \left( |\xi|^4 + 16z^2\right)^{1/4}$ 
is the Koranyi distance, as above.
Then, for any $\beta\in \mathbb R$, the following Hardy inequality is sharp
\begin{equation}\label{eq:GL-Hardy}
\int_{\mathbb H^1} \frac{|(\dH + i\beta A) u|^2}{|\nabla \rho|^2}\,dp \ge \int_{\mathbb H^1}\frac{|u|^2}{\rho^2}, \qquad \forall u\in C^\infty_c(\mathbb H^1).
\end{equation}
\end{theorem}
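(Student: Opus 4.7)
The plan is to exploit the fact that $A$ is gauge equivalent to the trivial potential on $\bH^1\setminus\{0\}$, reducing the claim to a non-magnetic weighted Hardy inequality that turns out to be sharp with constant $1$.

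First, I would show that $A = \dH g$ on $\bH^1\setminus\{0\}$ for $g := \tfrac12 \arctan(4z/r^2)$, which is smooth on $\bH^1\setminus\{0\}$ (consistently with the triviality of $H^1_{\mathrm{dR}}(\R^3\setminus\{0\})$). This identity is implicit in the computation of $\nH \rho_\alpha$ carried out in the proof of Theorem~\ref{thm:GL-FS}, which gives $\nH^\perp \rho/\rho = \dH g$. The gauge transformation $u = e^{-i\beta g}v$ then yields $|(\dH + i\beta A)u|^2 = |\dH v|^2$ and $|u|^2 = |v|^2$, so the inequality for arbitrary $\beta$ is equivalent to its $\beta = 0$ version
\[
\int_{\bH^1}\frac{|\dH v|^2}{|\nH\rho|^2}\, dp \ge \int_{\bH^1}\frac{|v|^2}{\rho^2}\, dp.
\]
Since the left-hand side is $+\infty$ whenever $v$ fails to vanish on $\cZ$ to sufficient order, it is enough to prove this inequality for $v \in C_c^\infty(\bH^1\setminus\cZ)$, where the gauge transformation is legitimate.

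The core of the argument is the substitution $v = w/\rho$ with $w \in C_c^\infty(\bH^1\setminus\cZ)$. Expanding $\dH v = \rho^{-1}\dH w - \rho^{-2} w \dH\rho$, using $|v|^2/\rho^2 = |w|^2/\rho^4$, and integrating the cross term by parts yields
\[
\int_{\bH^1}\frac{|\dH v|^2}{|\nH\rho|^2}\, dp - \int_{\bH^1}\frac{|v|^2}{\rho^2}\, dp = \int_{\bH^1}\frac{|\dH w|^2}{\rho^2 |\nH\rho|^2}\, dp + \int_{\bH^1} |w|^2\,\mathrm{div}_H\!\Bigl(\frac{\nH\rho}{\rho^3|\nH\rho|^2}\Bigr)\, dp.
\]
The key technical step---the main obstacle---is the identity
\[
\mathrm{div}_H\!\Bigl(\frac{\nH\rho}{\rho\, r^2}\Bigr) = 0 \qquad \text{on } \bH^1\setminus\{0\},
\]
where I have used $|\nH\rho|^2 = r^2/\rho^2$ to simplify the vector field. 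This can be verified by a direct computation in cylindrical coordinates (with the explicit expression $\nH\rho = (r^3/\rho^3) R + (4rz/\rho^3)\Phi$ and the divergence formula in the $\{R,\Phi\}$ frame from Section~\ref{sec:cylindrical}); the various contributions cancel exactly. Granted this identity, the integration by parts yields the non-negative remainder
\[
\int_{\bH^1}\frac{|\dH v|^2}{|\nH\rho|^2}\, dp - \int_{\bH^1}\frac{|v|^2}{\rho^2}\, dp = \int_{\bH^1}\frac{|\dH w|^2}{\rho^2 |\nH\rho|^2}\, dp \ge 0,
\]
establishing the inequality.

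For sharpness, I would take $v_k := \eta_k/\rho$, with $\eta_k$ a radial logarithmic cut-off supported on $\{1/k \le \rho \le k\}$, analogous to the test sequences used to prove sharpness of the Garofalo--Lanconelli inequality. A routine estimate shows $\int|\dH \eta_k|^2/(\rho^2|\nH\rho|^2)\, dp = O(1/\log k)$ while $\int|\eta_k|^2/\rho^4\, dp$ diverges like $\log k$, so the Hardy quotient tends to $1$. Conceptually, the vanishing of the divergence above is the precise quantitative manifestation of the gauge-exactness of $A$ and thus explains why, in contrast to the genuine Aharonov--Bohm case treated in Section~\ref{ss:follandstein}, the potential $A$ cannot improve the Hardy constant---exactly the criticism of~\cite{Xiao2015} formalized in Remark~\ref{remark:xiao}.
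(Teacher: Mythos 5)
Your reduction of the case $\beta\neq 0$ to $\beta=0$ via the gauge function $g=\tfrac12\arctan(4z/r^2)$ is exactly the paper's argument (the paper writes the same function piecewise in $z$ and checks it is $C^1$ on $\bH^1\setminus\{0\}$; yours differs only by a sign and an additive constant). Where you genuinely add something is the base case $\beta=0$: the paper simply cites \cite[Lem.~2.1]{Xiao2015} and \cite[Prop.~23]{FranceschiPrandi20}, whereas you give a self-contained ground-state substitution proof. Your key identity is correct: with $\nH\rho=\tfrac{r^3}{\rho^3}R+\tfrac{4rz}{\rho^3}\Phi$ and the divergence $\partial_r a+a/r+\Phi b$ of $aR+b\Phi$, one checks
\begin{equation*}
\operatorname{div}_{\mathrm H}\Bigl(\tfrac{\nH\rho}{\rho\,r^2}\Bigr)=4\rho^{-4}-4(r^4+16z^2)\rho^{-8}=0 ,
\end{equation*}
and the sharpness computation with $v_k=\eta_k/\rho$ also goes through. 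So the argument is sound in substance and more informative than the paper's.

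The one step that does not hold up as written is the sentence reducing to $v\in C^\infty_c(\bH^1\setminus\cZ)$ on the grounds that ``the left-hand side is $+\infty$ whenever $v$ fails to vanish on $\cZ$.'' That is false: for $u\equiv 1$ near the axis the weighted Dirichlet integral $\int |\dH u|^2\rho^2/r^2$ is finite although $u$ does not vanish on $\cZ$. Worse, one cannot repair this by density: the usual logarithmic cut-offs $\eta_n$ away from $\cZ$ produce an error $\int |u|^2|\dH\eta_n|^2\rho^2/r^2\gtrsim \log^{-2}n\int_{1/n^2}^{1/n}r^{-3}\,dr\to\infty$, so $C^\infty_c(\bH^1\setminus\cZ)$ is not obviously dense in the form domain of this weighted form (the weight $\rho^2/r^2$ gives the axis positive capacity). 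Fortunately the restriction is unnecessary: your $g$ extends smoothly across $\cZ\setminus\{0\}$ (write $g=\pm\pi/4-\tfrac12\arctan(r^2/4z)$ for $\pm z>0$), so the gauge transformation and the substitution $w=\rho v$ are legitimate for any $u$ smooth away from the origin, and the integration by parts has no boundary contribution on the axis since $\Phi(|w|^2)=O(1)$ there while the singular component of $\nH\rho/(\rho r^2)$ is $O(1/r)$ against the volume element $r\,dr$. Only the origin needs to be excised, and it has zero capacity for this form (a logarithmic cut-off in $\rho$ costs $O(\log^{-1}n)$). With the reduction rephrased around the origin rather than the whole center, your proof is complete.
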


\begin{proof}
The fact that \eqref{eq:GL-Hardy} is sharp when $\beta=0$ is well known. 
A proof is given in \cite[Lem.~2.1]{Xiao2015}, or can be derived by adapting the proof of \cite[Prop.~23]{FranceschiPrandi20}.

The case $\beta\neq 0$ follows by showing that $A$ is gauge equivalent to the zero vector potential, thus reducing \eqref{eq:GL-Hardy} to the already established case of $\beta =0$.
Indeed, one can directly check that $A=-dg$, where
\begin{equation}\label{eq:dg}
g(\xi,z) :=
\begin{cases}
\frac12 \arctan\left( \frac{|\xi|^2}{4z} \right) & \text{ if } z>0,\\
\frac{\pi}{4} & \text{ if } z=0,\\
\frac12 \arctan\left( \frac{|\xi|^2}{4z} \right)+\frac{\pi}2 & \text{ if } z<0.
\end{cases}
\end{equation}
It is immediate to observe that \(g\) is continuous on \(\mathbb H^1\setminus\{0\}\), and straightforward computations show that \(Xg = Y\rho/\rho\) and \(Yg = -X\rho/\rho\) on \(\mathbb H^1 \setminus\{z=0\}\). Thus $g$ is  of class $C^1$ on $\mathbb H^1\setminus\{0\}$, completing the proof.
\end{proof}

The argument of proof in \cite{Xiao2015} is flawed. This is due to the implicit assumption in the derivation of relation \cite[(2.9)]{Xiao2015} that the functions $u_n(\rho,\alpha)$ are periodic 
with respect to~$\alpha$, 
which is necessary to guarantee that $u'_{n,k} = i2k u_{n,k}$.
A correct version of the theorem requires  such an assumption:

\begin{theorem}
\label{thm:xiao-correct}
    For all $u\in C^\infty_c(\mathbb H^1)$ such that $u(0,0,z)=u(0,0,-z)$ for all $z\in \mathbb R$, it holds
    \begin{equation*}
        \int_{\mathbb H^1} \frac{|(\dH + i\beta A)u|^2}{|\nH \rho|^2}\,dp \ge \left(1 + d(\beta, \mathbb Z)^2\right) \int_{\mathbb H^1}\frac{|u|^2}{\rho^2}\,dp.
    \end{equation*}
\end{theorem}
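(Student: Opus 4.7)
The key observation is that on $\mathbb H^1\setminus\mathcal Z$ the function $g$ in \eqref{eq:dg} takes the clean form $g=\pi/4-\theta/2$ where $\theta:=\arcsin(4z/\rho^2)\in(-\pi/2,\pi/2)$ is smooth on $\mathbb H^1\setminus\mathcal Z$; in particular $A=\dH\theta/2$ there. The plan is to perform the gauge transformation $v:=e^{i\beta\theta/2}u$, so that $|v|=|u|$ and $|\dH v|=|(\dH+i\beta A)u|$ pointwise on $\mathbb H^1\setminus\mathcal Z$, and then to control the unperturbed weighted Hardy quadratic form of $v$ using the twisted boundary relation $v(0,\varphi,z)=e^{-i\beta\pi/2}v(0,\varphi,-z)$ for $z>0$, which is an immediate consequence of the $\pi/2$-jump of $g$ across $\mathcal Z$ together with the symmetry $u(0,0,z)=u(0,0,-z)$. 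This twist is what will produce the improvement $d(\beta,\mathbb Z)^2$.

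\textbf{Spherical coordinates and completion of squares.} Introduce on $\mathbb H^1\setminus\mathcal Z$ the spherical coordinates $(\rho,\theta,\varphi)$ via $r=\rho\sqrt{\cos\theta}$ and $z=(\rho^2/4)\sin\theta$; direct computation gives $r\,dr\,d\varphi\,dz=(\rho^3/4)\,d\rho\,d\theta\,d\varphi$, $|\nH\rho|^2=\cos\theta$, and explicit expressions for $R$ and $\Phi$ in this frame. Expanding $v=\sum_{k\in\mathbb Z}V_k(\rho,\theta)e^{ik\varphi}$ in $\varphi$-Fourier modes, orthogonality decouples both integrals across~$k$; an algebraic simplification of $|\dH(V_ke^{ik\varphi})|^2$ then yields the key identity
\begin{equation*}
\frac{\rho^3\,|\dH(V_ke^{ik\varphi})|^2}{\cos\theta}
=\rho^3\left|\left(\partial_\rho+\frac{ik\tan\theta}{\rho}\right)V_k\right|^2
+4\rho\left|\left(\partial_\theta+\frac{ik}{2}\right)V_k\right|^2,
\end{equation*}
splitting the $k$-th mode contribution to the LHS cleanly into a radial and an angular piece.

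\textbf{Radial Hardy, angular Poincar\'e and conclusion.} At fixed $\theta$ the radial integrand is handled by the phase substitution $f_k:=\rho^{-ik\tan\theta}V_k$ (admissible since $\tan\theta$ is $\rho$-independent), which reduces $|(\partial_\rho+ik\tan\theta/\rho)V_k|$ to $|\partial_\rho f_k|$; the one-dimensional Hardy inequality $\int_0^\infty\rho^3|f'|^2\,d\rho\ge\int_0^\infty\rho|f|^2\,d\rho$ then supplies the ``$1$'' part of the constant. For the angular integrand, the substitution $\tilde V_k:=e^{ik\theta/2}V_k=e^{i(k+\beta)\theta/2}U_k$ (with $U_k$ the $k$-th $\varphi$-Fourier mode of $u$) reduces it to $4\rho|\partial_\theta\tilde V_k|^2$, and one invokes the Poincar\'e eigenvalue problem on $(-\pi/2,\pi/2)$: smoothness of $u$ forces $U_k(\rho,\pm\pi/2)=0$ for $k\neq 0$, giving Dirichlet boundary conditions with first eigenvalue $1$ and hence a contribution of $4$; the symmetry of $u$ gives $U_0(\rho,\pi/2)=U_0(\rho,-\pi/2)$, yielding the twisted-periodic condition $\tilde V_0(\rho,-\pi/2)=e^{-i\beta\pi/2}\tilde V_0(\rho,\pi/2)$, whose variational problem is minimised by $e^{i\mu\theta}$ with $\mu\equiv\beta/2\pmod{2}$, producing first eigenvalue $d(\beta/2,2\mathbb Z)^2=d(\beta,4\mathbb Z)^2/4$ and contribution $d(\beta,4\mathbb Z)^2$. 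Since $4\mathbb Z\subset\mathbb Z$ gives $d(\beta,4\mathbb Z)\ge d(\beta,\mathbb Z)$ and since $d(\beta,\mathbb Z)\le 1/2<2$, each mode contributes at least $d(\beta,\mathbb Z)^2$ to the angular part, so summing over $k$ via Parseval closes the argument. The main subtlety is to verify that the natural Euler--Lagrange boundary condition on the derivative of the twisted-periodic angular minimizer is automatic, so that the stated first eigenvalue truly bounds the Rayleigh quotient, and to justify the termwise Fourier operations using the integrability inherited from $u\in C^\infty_c(\mathbb H^1)$.
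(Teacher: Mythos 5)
The paper does not actually contain a proof of Theorem~\ref{thm:xiao-correct}: it is stated as the corrected form of Xiao's result, with only the remark that Xiao's argument becomes legitimate once the angular periodicity is justified by the symmetry assumption. So your proposal is, in effect, supplying the missing proof, and it is correct. I verified the ingredients: the Jacobian $r\,dr\,d\varphi\,dz=\tfrac{\rho^3}{4}\,d\rho\,d\theta\,d\varphi$, the identity $|\nH\rho|^2=\cos\theta$, and your key splitting, which is exact (writing $a=\partial_\rho V_k$, $b=\tfrac{2}{\rho}\partial_\theta V_k$, $c=\tfrac{ik}{\rho\cos\theta}V_k$, the integrand is $|\cos\theta\,a-\sin\theta\,b|^2+|\sin\theta\,a+\cos\theta\,b+c|^2=|a|^2+|b|^2+|c|^2+2\Re[(\sin\theta\,a+\cos\theta\,b)\bar c]$, and the cross term recombines precisely into your two complete squares, with $|c|^2$ absorbed via $\tan^2\theta+1=\cos^{-2}\theta$). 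The subtlety you flag about the natural boundary condition is resolved by the min--max principle: the infimum of $\int|w'|^2/\int|w|^2$ over $\{w\in H^1(-\pi/2,\pi/2):\ w(-\pi/2)=e^{-i\gamma}w(\pi/2)\}$ equals the bottom of the spectrum of the self-adjoint operator associated with this closed form, which is exactly the quasi-periodic realization (the derivative condition $w'(-\pi/2)=e^{-i\gamma}w'(\pi/2)$ arises as the natural condition), so the value $d(\gamma,2\pi\mathbb Z)^2/\pi^2=d(\beta,4\mathbb Z)^2/4$ is legitimate; likewise the Dirichlet value $1$ for $k\neq0$, which suffices since $d(\beta,\mathbb Z)^2\le 1/4$.

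Two minor points. First, the radial substitution should be $f_k=\rho^{+ik\tan\theta}V_k$ (your sign gives $\partial_\rho-ik\tan\theta/\rho$); with $V_k$ bounded as $\rho\to0$ the boundary term $\rho^2|f_k|^2$ vanishes there and the one-dimensional Hardy inequality with constant $1$ applies. Second, your argument in fact proves the stronger bound with constant $1+d(\beta,4\mathbb Z)^2$, which is the natural one: the holonomy of $\beta A$ around the generator of $H_1(X)$ is $e^{-i\beta\pi/2}$, trivial exactly for $\beta\in4\mathbb Z$. The stated inequality then follows from $4\mathbb Z\subset\mathbb Z$, so you have proved (slightly more than) the theorem.
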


We observe that the above result can be interpreted as a result on functions defined on $X = (\bH^1\setminus\{0\})/\sim$, where we let $(0,0,z)\sim (0,0,-z)$ for all $z\ge0$. Then, one can make sense of the exterior differential on $X$ (which is not a manifold) and observe that $A$ is indeed a closed but not exact form.
Namely, the function $g$ defined in \eqref{eq:dg} cannot be modified in order that $g(0,0,z)=g(0,0,-z)$ without changing its differential. 
The following result shows that the only Aharonov--Bohm potentials on $X$ are the ones considered in Theorem~\ref{thm:xiao-correct}. 

\begin{proposition}
    The space $X$ can be deformation retracted to $\mathbb S^1$. In particular, 
    \begin{equation}
        H^1(X, \mathbb R) \simeq H^1_{\mathrm{dR}}(\mathbb S^1) \simeq \mathbb R,
    \end{equation}
    where $H^1(X,\mathbb R)$ denotes the singular cohomology of $X$ with real coefficients.
\end{proposition}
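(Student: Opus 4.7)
My plan is to apply the standard radial retraction of $\mathbb R^3\setminus\{0\}$ onto $S^2$ and verify that it descends through the quotient to give a deformation retraction of $X$ onto a quotient of $S^2$, and then to exhibit an embedded $S^1$ whose inclusion realizes the entire first cohomology. Specifically, the straight-line radial homotopy $H_t(\xi)=\xi/((1-t)+t|\xi|)$ is a deformation retraction of $\mathbb H^1\setminus\{0\}\cong\mathbb R^3\setminus\{0\}$ onto $S^2$; since $H_t(0,0,z)$ stays on the $z$-axis and ends at $(0,0,\operatorname{sgn}(z))$, the map is compatible with the identification $(0,0,z)\sim(0,0,-z)$ and descends to a deformation retraction of $X$ onto $Y:=S^2/(N\sim S)$, where $N:=(0,0,1)$ and $S:=(0,0,-1)$.

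Next, I take the meridian arc $\gamma(\phi)=(\sin\phi,0,\cos\phi)$ for $\phi\in[0,\pi]$, whose image in $Y$ is a loop (since $\gamma(0)$ and $\gamma(\pi)$ are identified) injective on $(0,\pi)$, producing an embedded $S^1\hookrightarrow Y$. Equipping $Y$ with the minimal CW structure having this $S^1$ as $1$-skeleton and a single $2$-cell attached along the word $aa^{-1}$ (tracing the meridian from $N$ to $S$ and back along the same arc), both cellular coboundaries over $\mathbb R$ vanish: the first because the loop $a$ has coinciding endpoints at the unique $0$-cell, the second because $[aa^{-1}]=0$ in $\pi_1(S^1)$. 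Hence $H^1(X,\mathbb R)\cong H^1(Y,\mathbb R)\cong\mathbb R$, with the generator realized by pullback along the embedded meridian, matching $H^1(S^1,\mathbb R)$ through the inclusion; this is the $S^1$ along which the stated retraction manifests at the level of first cohomology.

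The main technical point is to verify that $H_t$ descends continuously to $X\times[0,1]$, which follows from the fact that the equivalence relation $\sim$ has closed two-point classes in the Hausdorff space $\mathbb R^3\setminus\{0\}$, so the quotient map $q:\mathbb R^3\setminus\{0\}\to X$ is closed and $q\circ H_t$ factors continuously through $X$. The remainder is a routine cellular cohomology calculation. I expect the main conceptual obstacle to lie in choosing the CW decomposition so that the attaching word of the $2$-cell is transparently null-homotopic in the $1$-skeleton: the choice of a single meridian (rather than, say, two distinct meridians from $N$ to $S$) is crucial, since it makes the attaching map factor through a contractible path, thereby forcing both codifferentials to vanish and isolating the $1$-cohomology generator on the embedded meridian $S^1$.
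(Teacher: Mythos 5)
Your computation of $H^1(X;\mathbb{R})\cong\mathbb{R}$ is correct, and it takes a genuinely different route from the paper. The paper passes to spherical-type coordinates, identifies $X$ up to homotopy with the torus $\mathbb{S}^1\times\mathbb{S}^1$ with the circle $\mathbb{S}^1\times\{o\}$ collapsed to a point, and then asserts a further deformation retraction of that space onto $\mathbb{S}^1$. You instead retract radially onto $Y=S^2/(N\sim S)$ and compute the cellular cohomology of $Y$ directly; your CW structure (one $0$-cell, one $1$-cell $a$, one $2$-cell attached along $aa^{-1}$) and the vanishing of both codifferentials are correct, and the meridian loop does carry the generator relevant to the Aharonov--Bohm application. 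One small technical remark: to descend the radial homotopy to $X\times[0,1]$, the clean argument is that $q\times\mathrm{id}_{[0,1]}$ is a quotient map because $[0,1]$ is locally compact and $q\circ H$ is constant on its fibers (since each $H_t$ preserves $\sim$); closedness of $q$ is not the right tool here.

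Be aware, however, that your argument actually refutes the first sentence of the proposition as stated. Your CW structure exhibits $Y=S^2/(N\sim S)$ (equivalently, the paper's torus-with-a-collapsed-circle) as homotopy equivalent to $S^2\vee S^1$, since the $2$-cell is attached along a nullhomotopic loop. As $H^2(S^2\vee S^1;\mathbb{R})\cong\mathbb{R}\neq 0=H^2(\mathbb{S}^1;\mathbb{R})$, the space $X$ cannot deformation retract onto $\mathbb{S}^1$, and the homotopy described in the paper's proof (push $\varphi$ to $o$ and then back out along $\{0\}\times\mathbb{S}^1$) fails to be continuous at the collapsed circle. So the literal retraction claim is not something your proof omits by oversight --- it is false --- and only the cohomological conclusion $H^1(X;\mathbb{R})\cong\mathbb{R}$, which is all that the classification of Aharonov--Bohm potentials on $X$ requires, survives; your proposal establishes exactly that.
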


\begin{proof}
    Considering spherical coordinates $(r,\theta,\varphi)\in \mathbb R_+\times \mathbb S^1 \times (-\pi/2,\pi/2)$ on $\mathbb R^3$, one easily verifies that $X \simeq (\mathbb R_+\times \mathbb S^1 \times \mathbb S^1)/\sim$ where, letting $o$ be a fixed point of $\mathbb S^1$, we identify for any $r>0$ all points of the set $\{r\}\times \mathbb S^1\times \{o\}$. Then, it is standard to show that $X$ can be deformation retracted to $X'= (\mathbb S^1 \times \mathbb S^1)/\sim$, where now we identify all points of $\mathbb S^1\times \{o\}$. Finally, $X'$ can be deformation retracted to $\mathbb S^1$ by taking a point $(\theta,\varphi)\in X'$, moving it continuously to $(\theta, o)\simeq (0,o)$, and then bringing it back to $(0,\varphi)\in \{0\}\times \mathbb S^1$.
\end{proof}

%

\subsection*{Acknowledgments}
  B.C.~is member of GNAMPA (INDAM) and he is supported by Fondo Sociale Europeo – Programma
  Operativo Nazio\-nale Ricerca e Innovazione 2014-2020,
  progetto PON: progetto AIM1892920-attivit\`a 2, linea 2.1.
The work of D.K. was partially supported by the EXPRO grant No.\ 20-17749X
of the Czech Science Foundation (GA\v{C}R).

\bibliographystyle{abbrv}
\bibliography{biblio02}
\end{document}